\documentclass[english,12pt]{article}
\usepackage{amsmath}
\usepackage[a4paper, left=-0.5cm, right=-0.5cm]{geometry}
\usepackage{amsthm}
\usepackage{pdfsync}
\usepackage{a4wide}
\usepackage{amssymb}
\usepackage{amsfonts}
\usepackage{color}
\usepackage{empheq}
\usepackage{enumerate}
\usepackage{graphicx}
\pdfoutput=1
\usepackage{authblk}

\usepackage{fontenc}
\usepackage[utf8]{inputenc}
\usepackage{afterpage}
\usepackage[dvipsnames]{xcolor}
\usepackage{newtxtext,commath}
\usepackage{amscd, amsmath, amsthm, amssymb, amsfonts}
\usepackage{color}
\usepackage{enumerate}
\usepackage{graphicx}
\usepackage{ragged2e}
\usepackage{hyperref}
\usepackage{times}
\numberwithin{equation}{section}
\usepackage{comment}
\usepackage{amsfonts}
\usepackage{color}
\usepackage{enumerate}
\newtheorem{Th}{Theorem}

\newtheorem{prop}{Proposition}
\newtheorem{Def}{Definition}
\newtheorem{obs}{Remark}

\def\rr{\mathbb{R}}

\def\nn{\mathbb{N}}

\def\eps{\varepsilon}

\def\eps{\varepsilon}

\def\nn{\mathbb{N}}

\def\eps{\varepsilon}

\def\be{\begin{equation}}
	\def\ee{\end{equation}}

\numberwithin{equation}{section} \numberwithin{Th}{section}
\numberwithin{cor}{section} \numberwithin{lema}{section}
\numberwithin{prop}{section} \numberwithin{obs}{section}
\numberwithin{Def}{section}
\title{Sharp $H^2$-regularity in dimensions $N\geq 5$ and beyond  for  two  classes of elliptic problems with critical unbounded coefficients}
\author[1, 2, \thanks{Emails: \texttt{cristian.cazacu@fmi.unibuc.ro}, \texttt{ana.calina@math.unibuc.ro}}]{Cristian Cazacu}
\author[1, $^*$, \thanks{A.C. was partially supported by a doctoral fellowship offered by the Doctoral School of Mathematics, University of Bucharest and by the NSF-UEFISCDI grant ``Linear and nonlinear stability of physical flows," a complex bilateral research collaboration between U.S. and Romanian researchers, UEFISCDI project code ROSUA-2024-0001.}]{Adelina Călina}

\affil[1]{Faculty of Mathematics and Computer Science, University of Bucharest,
14 Academiei Street, 010014 Bucharest, Romania}
\affil[2]{Gheorghe Mihoc-Caius Iacob Institute of Mathematical Statistics and Applied Mathematics of the Romanian Academy,
13 Calea 13 Septembrie, Sector 5, 050711 Bucharest, Romania}

\date{}

\begin{document}
\maketitle
\textit{\textit{Keywords}: Hardy--Rellich inequalities; Elliptic regularity; Critical singular coefficients;\\
Regularized potentials and data; Uniform a priori estimates.}

\textit{MSC 2020:} 35J75, 35A23, 49J40, 46E35, 26D10
\begin{abstract}
We establish sharp parameter thresholds governing $H^2$ -regularity of weak solutions in $H_0^1$  for two classes of  elliptic problems with critical unbounded perturbations, in dimensions  $N\geq 5$. More precisely,  we  consider two distinct  $\lambda$-parametric elliptic problems $ - \Delta v + \lambda \frac{x\cdot \nabla v}{|x|^{2}} =f$ and $-\Delta v 
+ \lambda \frac{v}{|x|^2}=f$ posed in a bounded $C^2$-domain $\Omega\subset \rr^N$ containing the origin $x=0$. We observe that the singular perturbations  $\frac{x\cdot \nabla v}{|x|^{2}}$ and $\frac{v}{|x|^2}$  are homogeneous operators of order 2 consistent with the scaling of the Laplacian.  
In view of the Hardy inequality the problems are well-posed in $H_0^1(\Omega)$ for $\lambda<\frac{N-2}{2}$ and $\lambda>-\frac{(N-2)^2}{4}$ respectively. 

The main results are as follows. For the first problem we show that any solution $v\in H_0^1(\Omega)$ belongs to $H^2(\Omega)$ for any $\lambda<
\frac{N-2}{2}$ provided $f\in L^2(\Omega)$. This fully extends the previous $H^2$ regularity properties obtained by Kim and Tsai in \cite{Kim-Tsai} for $\lambda\leq 0$. 
For the second problem we show that $H^2$ regularity holds for any $\lambda>- \frac{N(N-4)}{4}$ and fails for any $\lambda\in \left(-\frac{(N-2)^2}{4},-\frac{N(N-4)}{4}\right]$. This extends sharply the range of $\lambda
\in \left(-\frac{N(N-4)}{4}, \frac{N(N-4)}{4}\right)$ obtained when applying the Kato perturbation theory in \cite{Kato}.  
In addition, we develop sharp second order Hardy-Rellich type inequalities for the involved elliptic operators which are essential in the above proofs.

\end{abstract}

\section{Introduction} 
The most basic  regularity properties hold for distributional $L_{loc}^1$ harmonic functions  which  are known to be analytic.
The weak regularity theory of elliptic operators with bounded coefficients is in general well understood by now (see for instance the recent monograph by Fern\^{a}ndez-Real and Ros-Oton \cite{Real-Ros-Oton} and also the classical book by Gilbarg and Trudinger  \cite{Xavier}).   
    This is a flourish subject which has been extensively developed in the last decades, facing very interesting and deep results. Motivated by real world applications, as in \cite{Adams}    Sobolev spaces constitute the natural functional framework used to introduce the variational approach  of the boundary value problems. A fundamental result concerning weak regularity in this setting is the Calder\'on-Zigmund theory which asserts that if $ \Omega \subset \mathbb{R}^{N}$ is a bounded domain with $ C^{2}$ boundary and $ f \in L^{2}(\Omega)$ then the $H_0^1(\Omega)$ weak solution to the Dirichlet problem
\begin{align}
    \left\{\begin{array}{ll} 
-\Delta v = f(x),  &		\textrm{ in} \quad  \Omega\\
	v=0, &  \textrm{ on }  \partial \Omega,\\ 
	\end{array}\right.
\end{align}
gains two  partial derivatives in the weak sense with respect to $f$, that is $ v \in H^{2}(\Omega)$. Iteratively, $f\in H^k(\Omega)$ implies $v\in H^{k+2}(\Omega)$ and therefore $f\in C^\infty(\Omega)$ implies $v\in C^\infty(\Omega)$.  
These weak regularity results extend to more general elliptic equations written in divergence form $ - \partial_{i}(a_{ij}\partial_{j}v) + b_{i}\partial_{i}v + cv = f$, 
where coefficients $ b_{i}$ and $ c $ are bounded, whereas   $ a_{ij} \in C^{1}(\Omega)$ and satisfy  the  ellipticity condition (i.e. there exists a constant $ C > 0$ such that  $a_{ij}(x)\xi_{i}\xi_{j} \ge C |\xi|^{2},$ for all $ \xi \in \mathbb{R}^{N}$ and a.e $ x \in \Omega$). The smoothness and the boundedness assumptions of the coefficients crucially influence the sharp weak regularity.  

These features might change dramatically when the elliptic operator has singular coefficients, in which cases the standard elliptic regularity (cf. e.g. \cite{anderson}, \cite{Charro}, \cite{CalderonZygmund1952}, \cite{CalderonZygmund1956}, \cite{Stampacchia}) does not apply. In order to understand this phenomena it is more appropriate to consider the paradigmatic case of the Laplacian with unbounded perturbations. 

In this paper we  focus to complete the study of sharp weak regularity for two well-known $\lambda$-parametric  Dirichlet boundary value problems with critical singular pertubations of the Laplacian in \emph{bounded domains}, namely 
\begin{align}\label{p1}
    \left\{\begin{array}{ll} 
-\Delta v+\lambda \frac{x\cdot \nabla v}{|x|^2} = f,  &		\textrm{ in} \quad  \Omega\\
	v=0, &  \textrm{ on }  \partial \Omega,\\ 
	\end{array}\right.\tag{P1}
\end{align}
and
\begin{align}\label{p2}
    \left\{\begin{array}{ll} 
-\Delta v+\lambda \frac{v}{|x|^2} = f,  &		\textrm{ in} \quad  \Omega\\
	v=0, &  \textrm{ on }  \partial \Omega,\\ 
	\end{array}\right.\tag{P2}
\end{align}
with $\lambda\in \rr$ and $0\in \Omega$. Definitely,  problems \eqref{p1}-\eqref{p2} have unbounded coefficients because the presence of the singularity at the origin. Moreover both singular operator perturbations $\lambda \frac{x\cdot \nabla v}{|x|^2}$ and $\lambda \frac{v}{|x|^2}$ of problems \eqref{p1} and \eqref{p2}, respectively,  exhibit critical homogeneity of degree 2, the same as  the Laplacian,  which makes the analysis more intricate. In that respect, a fundamental role in this context is played by the classical Hardy inequality 
   in dimensions $N\geq 3$  
   \begin{equation}\label{HI}
       \int_{\Omega} |\nabla v|^2 dx \geq \frac{(N-2)^2}{4} \int_{\Omega}\frac{v^2}{|x|^2}, \quad \forall v\in H_0^1(\Omega).
   \end{equation}
It is well-known that $\lambda_\star:=\frac{(N-2)^2}{4}$ is the optimal constant in \eqref{HI} which separates the regimes of well-posedness and instability for problems \eqref{p1}-\eqref{p2}. For our purpose, next we state the definitions of weak solutions which could also extend to the whole space $ \mathbb{R}^{N}$. 

\begin{Def}\label{def1}
 Let $ f \in L^{2}(\Omega)$. A function $ v \in H_{0}^{1}(\Omega)$ is a weak solution to \eqref{p1} if 
\begin{align}\label{20}
    \int_{\Omega}\left(\nabla v \nabla \phi + \lambda \frac{x\cdot \nabla v}{|x|^{2}}\phi\right) dx = 
    \int_{\Omega} f\phi dx,  \quad \textrm{for all $ \phi \in H^{1}_{0}(\Omega)$} 
\end{align}
\end{Def}

\begin{Def}\label{def2}
 Let $ f \in L^{2}(\Omega)$. A function $ v \in H_{0}^{1}(\Omega)$ is a weak solution to \eqref{p2} if 
\begin{align}\label{20_b}
    \int_{\Omega}\left(\nabla v \nabla \phi +\lambda \frac{  v \phi}{|x|^{2}}\right) dx = 
    \int_{\Omega} f\phi dx,  \quad \textrm{for all $ \phi \in H^{1}_{0}(\Omega)$} 
\end{align}
\end{Def}

\paragraph{\bf Known regularity results on the whole space $\mathbb{R}^{N}$.}
Some regularity aspects have been already treated for the previous problems.
Among the classical contributions to the study of singular elliptic operators we refer to the work
of Kato \cite{Kato}, which provides a characterization of the domain of elliptic operators with
singular, relatively bounded perturbations in $\mathbb{R}^{N}$ . This result has served as a starting point for
subsequent investigations on weak regularity properties of singular operators.
The well-known Hardy-Rellich inequality  (see e.g. \cite{Beckner2008}, \cite{Cazacu3}, \cite{Gesztesy} )
\begin{align}\label{104}
\frac{N^{2}}{4}\int_{\mathbb{R}^{N}}
\frac{|x\nabla v|^{2}}{|x|^{4}}\,dx
&\le
\int_{\mathbb{R}^{N}}|\Delta v|^{2}\,dx,
\quad
\forall\, v\in C_c^\infty(\mathbb{R}^{N}),\; N\ge 3.
\end{align}
ensures that the singular term $\frac{x\nabla v}{|x|^{2}} $ is relatively bounded with respect to $ -\Delta$ when $ \lambda \in (-\frac{N}{2},\frac{N}{2})$.
Hence,   Kato's result guarantees that the domain of the operator $ -\Delta + \frac{x\nabla \cdot}{|x|^{2}}$ coincides with $ H^{2}(\mathbb{R}^{N})$ for all $\lambda \in (-\frac{N}{2},\frac{N}{2}) $ and $ N \geq 3$ .
The famous Rellich inequality  (see  e.g. \cite{Metafune2015}  \cite{Cazacu1}) states that
\begin{align}\label{105}
\frac{N^{2}(N-4)^{2}}{16}\int_{\mathbb{R}^{N}}
\frac{| v|^{2}}{|x|^{4}}\,dx
&\le
\int_{\mathbb{R}^{N}}|\Delta v|^{2}\,dx,
\quad
\forall\, v\in C_c^\infty(\mathbb{R}^{N}),\; N\ge 5.
\end{align}
which implies that the singular potential which arises in  problem \eqref{p2} is relatively bounded with respect to $ - \Delta$ when $ \lambda \in (-\frac{N(N-4)}{4},\frac{N(N-4)}{4})$. Since the Laplacian is a closed operator, it follows from Kato's theorem that the weak solution of problem \eqref{p2} belongs to $ H^{2}(\mathbb{R}^{N}) $ for all $ \lambda \in (-\frac{N(N-4)}{4},\frac{N(N-4)}{4})$ and $ N \geq 5$.
However, Kato's theorem does not apply to \emph{bounded domains} because  inequalities \eqref{104} and \eqref{105} make sense for  functions in $ C_{c}^{\infty}(\Omega)$ which is not dense in $ H^{2}(\Omega)\cap H_{0}^{1}(\Omega)$. In contrast, when $ N \geq 5$, the space $ C_{c}^{\infty}(\Omega)$ is dense in $ H_{0}^{2}(\Omega)$, allowing these inequalities to be extended by density to $ H_{0}^{2}(\Omega)$, but not to the full domain of the Laplacian. 

A comprehensive study of the domains of this class of singular operators in the whole space $ \mathbb{R}^{N}$ was carried out by Metafune-Okazawa-Sobajima-Spina \cite{Metafune2016}. Going beyond Kato's perturbation framework, the authors established spectral theoretic techniques to characterize the domains of a broad class of scale-invariant singular elliptic operators. In particular, they considerably enlarged the admissible range of the parameter $ \lambda$ for which a complete domain characterization can be obtained. Hence, they prove that $D_{\min}\left(-\Delta+\frac{\lambda}{|x|^{2}}\right)
=\{v\in H^{2}(\mathbb{R}^{N})
\mid\frac{v}{|x|^{2}}\in L^{2}(\mathbb{R}^{N})\}$ for every $ \lambda \in \left(-\frac{N(N-4)}{4}, \infty\right)$. 
Also they get that $D_{\min}\left(-\Delta+\lambda\frac{x\nabla}{|x|^{2}}\right)
=\{v\in H^{2}(\mathbb{R}^{N})
\mid \frac{x\nabla v}{|x|^{2}}\in L^{2}(\mathbb{R}^{N})\}$ for every $ \lambda  \in \left(-\infty, \frac{N}{2}\right)$ (see \cite[Theorem 4.5]{Metafune2016}). Their characterization is formulated in terms of $ H_{loc}^{2}(\mathbb{R}^{N} \setminus \{0\})$, together with suitable weighted integrability conditions describing the behavior near the singularity. Moreover, their results are obtained without explicit restrictions on the space dimension N. Thus, the techniques employed in \cite{Kato}, \cite{Metafune2016} are intrinsically tied to the whole-space setting and their extension to bounded domains remains a challenging issue. It is worth emphasizing that both Kato's theory and the approach adopted by Metafune are developed independently of the variational formulation of weak solutions. 

\paragraph{\bf Known regularity results in bounded domains.} A significant contribution in the bounded domain setting is due to Peral-Soria \cite{Ireneo Peral}, who investigated the regularizing effects of the Hardy-type singular potential in \eqref{p2}. Assuming $ f \in L^{m}(\Omega)$, they established conditions under which Calderon- Zygmund- Stampacchia regularity estimates remain valid. More precisely, if $ 1< m< \frac{2N}{N+2}$ and $ \lambda > \frac{N(1-m)(N-2m)}{m^{2}}$ then the weak solution gains regularity in $ W_{0}^{1, m^{\ast}}(\Omega)$. Moreover, if $ \frac{2N}{N+2}< m< \frac{N}{2}$ and $ \lambda > \frac{N(1-m)(N-2m)}{m^{2}}$, they proved that the weak solution belongs to $ L^{m^{\ast \ast}}(\Omega)$, where $ m^{\ast \ast}= \frac{Nm}{N-2m}$. 

These results also show that the Hardy-Leray operator does not exhibit the classical regularity results: even for $ f \in L^{\infty}(\Omega)$, the corresponding solution need not to be bounded. Moreover, the estimates given by Peral-Soria do not, in general, imply $ H^{2}-$ regularity. Indeed, for $ f \in L^{2}(\Omega)$ they only obtain $ u \in L^{2^{\ast \ast}}(\Omega)$, while $ H^{2}(\Omega)$ is not guaranteed. 

In the same spirit, Petitta-Leonori in \cite{LP07} analyzed the regularizing effect of the drift term in problem \eqref{p1}, assuming $ f \in L^{m}(\Omega)$. They showed that the classical regularity results are preserved under suitable assumptions on $ \lambda$. 
In the subcritical regime $ 1< m< \frac{2N}{N+2}$, if $ \lambda > \frac{N(1-m)}{m}$, the corresponding weak solution belongs to $ W_{0}^{1,m^{\ast}}(\Omega)$. 
For $ \frac{2N}{N+2}< m< \frac{N}{2}$, under the same condition on $ \lambda$, they further proved that the solution improves the integrability, leading to $ L^{m^{\ast \ast}}(\Omega)$. 
A striking difference with respect to the Hardy-Leray problem appears in the supercritical case $ m > \frac{N}{2}$, where the solution enjoys an $ L^{\infty}$-regularizing effect. 

In the bounded domain setting, Kim and Tsai in \cite{Kim-Tsai} established an $ H^{2}$-regularity result for weak solutions to the more general elliptic equation $ -\Delta v-b\cdot v= f.$ Their approach relies on Lorentz space estimates and requires $ N \geq 5$. More precisely, assuming $ b \in L^{N,\infty},\quad \operatorname{div} b \in L^{\frac N2,\infty}(\Omega) $  and $\operatorname{div} b \geq0,  $  they proved that weak solutions belong to $ H^{2}(\Omega)$. 

Since problem \eqref{p1} corresponds to the vector field $ b(x)= -\frac{\lambda x}{|x|^{2}}$, their result applies whenever $ \lambda \leq 0$, yielding $ H^{2}(\Omega)$-regularity for weak solutions in dimensions $ N \geq 5$ in these cases.

\paragraph{\bf Main results for problem \eqref{p1}.}
  The existence and uniqueness of $H_0^1(\Omega)$ weak solutions  in \eqref{20}  hold for any $\lambda<\frac{N-2}{2}$. This follows from Lax-Milgram lemma due to the coercivity  of the functional energy: 
  \begin{multline}\label{energy_funct}
l_\lambda[\phi]:=\int_{\Omega} |\nabla \phi|^2 +\lambda \frac{x\cdot \nabla \phi}{|x|^2}\phi dx \\
  =\int_{\Omega} |\nabla \phi|^2 dx -\lambda \frac{N-2}{2}\int_{\Omega}\frac{\phi^2}{|x|^2} dx \geq \left(1-\frac{2\lambda}{N-2}\right)\int_{\Omega} |\nabla \phi|^2 dx,\qquad \qquad
  \end{multline}
  where the last inequality leads from \eqref{HI}. At the critical and supercritical levels $\lambda\geq \frac{N-2}{2}$, the situation becomes substantially more delicate, with loss of coercivity and the emergence of singular phenomena as we will emphasize later.

To control the lower-order terms arising from the drift, Metafune employed Hardy-Rellich type estimates involving an $ \varepsilon$-splitting \cite[Lemma 2.4]{Metafune2016}. Here, we show that, under a suitable restriction on the parameter $ \lambda$, the interpolation term can be removed, yielding the following parametric Hardy-Rellich inequality. This ingredient will be very  useful to obtain the sharp regularity for \eqref{p1}. This result is interesting in itself and its significance extends beyond that. 

\begin{Th}\label{HR_lambda}
   Let $N\geq 3$. For any $N\neq 4$ and any $\lambda \in \rr\setminus\left\{\lambda_n:=\frac{(2n+N-2)^2-4} {2(N-4)} \  | n \in \nn^\star \right\} \setminus \{\frac{N}{2}\}$ or $N=4$ and any $\lambda\in \rr\setminus\{\frac{N}{2}\}$  there exists $C(\lambda, N)>0$ such that  
   \begin{equation} 
\int_{\rr^N}  \left | -
\Delta v + \lambda\frac{x
\cdot \nabla v}{|x|^2} 
\right|^2  dx \geq C(\lambda, N) \int_{\rr^N} \frac{|x\cdot \nabla v|^2}{|x|^4} dx, \quad \forall v\in C_c^\infty(\rr^N). \label{HR_ineq1}
   \end{equation}
The constant $C(\lambda, N)$   is explicitly given by $C(\lambda, N) =$
  \begin{equation}
  =\left\{
\begin{array}{ll}
    (2-\lambda)^2, & N=4, \\[3pt]
    \left(\lambda-\frac{3}{2}\right)^2, & N=3 \textrm{ and } \lambda \geq -\frac{1}{2} \\ [3pt]
   \min\left\{\left(\lambda-\frac{3}{2}\right)^2, 4\min\limits_{k=1, k_0(\lambda, 3)} \left[k(k+1) +\frac{1}{2}\left(\lambda-\frac{3}{2}\right)\right]^2\right\},  & N=3 \textrm{ and } \lambda< -\frac{1}{2}\\ [3pt]
   \left(\frac{N}{2}-\lambda\right)^2, & \lambda \leq \frac{N^2-2N-2}{2(N-4)} ,  N>4. \\ [3pt]
      \min\left\{\left(\frac{N}{2}-\lambda\right)^2, \left(\frac{2}{N-4}\right)^2\min\limits_{k=1, k_0(N, \lambda)} \left[k(k+N-2) +\frac{N-4}{2}\left(\frac{N}{2}-\lambda\right)\right]^2\right\}, & \lambda > \frac{N^2-2N-2}{2(N-4)} ,  N>4.
\end{array}\right. \label{sharp_const}
  \end{equation} 
  where 
  \begin{equation}
    k_0(N,\lambda)=\left\{\begin{array}{cc}
     \frac{-N+\sqrt{4\lambda(N-4)-N^2+4N +4}}{2},    &  \textrm{ if }\  \frac{-N+\sqrt{4\lambda(N-4)-N^2+4N +4}}{2} \in \nn \\
\left[\frac{-N+\sqrt{4\lambda(N-4)-N^2+4N +4}}{2}
\right]+1,         & \textrm{otherwise}.
    \end{array}\right. \label{k0}
\end{equation} 
\end{Th}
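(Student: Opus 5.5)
The plan is to decompose $v$ in spherical harmonics, which reduces \eqref{HR_ineq1} to a family of one-dimensional weighted inequalities, one for each angular mode. The operator $-\Delta+\lambda\frac{x\cdot\nabla}{|x|^2}$ commutes with rotations, so we write $v(x)=\sum_{k\ge 0} v_k(r)Y_k(\sigma)$, where $Y_k$ runs over an orthonormal basis of spherical harmonics with eigenvalues $c_k=k(k+N-2)$ of $-\Delta_{S^{N-1}}$. Since $x\cdot\nabla v=\sum_k r v_k'Y_k$, orthogonality splits both sides of \eqref{HR_ineq1} into sums over $k$. With $a:=N-1-\lambda$, it therefore suffices to prove, for each $k$,
\begin{equation*}
\int_0^\infty \Big| -v'' - a\frac{v'}{r} + c_k\frac{v}{r^2}\Big|^2 r^{N-1}dr \;\ge\; C(\lambda,N)\int_0^\infty v'^2 r^{N-3}dr ,
\end{equation*}
for $v=v_k$ smooth and compactly supported in $[0,\infty)$, with the regularity at $0$ inherited from $v\in C_c^\infty(\rr^N)$.

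We introduce $A=\int v''^2r^{N-1}$, $B=\int v'^2 r^{N-3}$ and $D=\int v^2 r^{N-5}$, and expand the square. The cross terms are evaluated by integration by parts:
\begin{equation*}
\int v''v' r^{N-2}=-\tfrac{N-2}{2}B,\qquad \int v'v r^{N-4}=-\tfrac{N-4}{2}D,\qquad \int v'' v r^{N-3}=-B+\tfrac{(N-3)(N-4)}{2}D .
\end{equation*}
The boundary terms at $0$ need checking for each $k$; for $k\ge1$ one has $v_k=O(r^k)$, and $D$ is finite in the relevant cases. This gives the exact identity
\begin{equation*}
\mathrm{LHS}_k = A + \big(a^2-(N-2)a+2c_k\big)B + c_k\big(c_k+(N-4)(2-\lambda)\big)D .
\end{equation*}
We then apply the weighted Hardy inequality to $v'$, namely $A\ge \frac{(N-2)^2}{4}B$. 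Since $\frac{(N-2)^2}{4}-(N-2)a+a^2=\left(\frac N2-\lambda\right)^2$, we obtain
\begin{equation*}
\mathrm{LHS}_k\ge \Big(\tfrac N2-\lambda\Big)^2B+2c_kB+c_k\big(c_k+(N-4)(2-\lambda)\big)D .
\end{equation*}
For $k=0$ this already gives the constant $\left(\frac N2-\lambda\right)^2$, and $\lambda\neq \frac N2$ is exactly what keeps it positive.

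For $k\ge1$ we split according to the sign of $\beta_k:=c_k+(N-4)(2-\lambda)$.
\begin{itemize}
\item If $\beta_k\ge0$, we drop the $D$ term and keep $\left(\frac N2-\lambda\right)^2+2c_k\ge \left(\frac N2-\lambda\right)^2$.
\item If $\beta_k<0$ (which requires $N\neq4$), we use the second Hardy inequality $D\le \left(\frac{2}{N-4}\right)^2 B$. The identity $2+\frac{4(2-\lambda)}{N-4}=\frac{4}{N-4}\left(\frac N2-\lambda\right)$ collapses the coefficient of $B$ into the perfect square
\begin{equation*}
\Big(\tfrac{2}{N-4}\Big)^2\Big[c_k+\tfrac{N-4}{2}\Big(\tfrac N2-\lambda\Big)\Big]^2 ,
\end{equation*}
which is exactly the second entry in \eqref{sharp_const}.
\end{itemize}
For $N=4$ the $D$-coefficient is $c_k^2\ge0$, so the $D$ term can be dropped. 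Combined with $(N/2-\lambda)^2=(2-\lambda)^2$, this gives $C=(2-\lambda)^2$.

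It then remains to determine which $k$ have $\beta_k<0$ and to take the minimum over them. Solving $c_k<(N-4)(\lambda-2)$ as a quadratic in $k$ gives $k$ below $\frac{-N+2+\sqrt{(N-2)^2+4(N-4)(\lambda-2)}}{2}$, which is the threshold $k_0(N,\lambda)$ of \eqref{k0}. No admissible $k\ge1$ exists precisely when $c_1=N-1\ge(N-4)(\lambda-2)$, i.e. $\lambda\le\frac{N^2-2N-2}{2(N-4)}$ for $N>4$, which explains the case split in \eqref{sharp_const}. The square bracket vanishes exactly when $\lambda=\lambda_n$ for some $n$, which is why these values are excluded. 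For $N=3$ we have $N-4=-1$, so the same computation yields $4\left[k(k+1)+\frac12\left(\lambda-\frac32\right)\right]^2$, and $\beta_1<0$ iff $2<\lambda-2$, which I will recheck against the stated threshold $\lambda<-\frac12$, possibly after a sign correction.

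I expect the main obstacle to be justifying the 1D integrations by parts and the Hardy inequality $D\le\left(\frac{2}{N-4}\right)^2B$ uniformly in the modes, especially the boundary terms at $r=0$ for low modes when $N=3$ (where $D$ may diverge for $k=0$) and for $N=4$. I would handle this mode by mode, using $v_k=O(r^k)$, and treat $k=0$ separately without ever invoking $D$.

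Sharpness of $C(\lambda,N)$ is not required for the existence claim. If it is wanted, it should follow by testing with $v=\varphi(r)Y_k$ where $\varphi$ approximates $r^{-(N-4)/2}$, which makes both Hardy steps asymptotically equalities simultaneously.
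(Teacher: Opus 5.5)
Your argument is essentially the paper's. It uses the spherical-harmonic reduction, the two weighted Hardy inequalities $A\ge\frac{(N-2)^2}{4}B$ and $B\ge\frac{(N-4)^2}{4}D$, and the same completed square $\left(\frac{2}{N-4}\right)^2\left[c_k+\frac{N-4}{2}\left(\frac N2-\lambda\right)\right]^2$. Your modal identity agrees with \eqref{oper}; you derive it directly in one variable instead of through the Cartesian cross-term computation \eqref{mixt_final}. Your positivity argument is also correct: the bracket vanishes for some $k\ge1$ exactly when $\lambda=\lambda_k$, and only finitely many $k$ enter. It is more direct than the paper's Pell-equation proposition. The one real difference is where you split. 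The paper first spends $2c_kB\ge\frac{(N-4)^2}{2}c_kD$ in every mode, which produces $R(k,N)=c_k\left[c_k+(N-4)\left(\frac N2-\lambda\right)\right]$, and then splits on the sign of $R(k,N)$. You split on the sign of $\beta_k=c_k+(N-4)(2-\lambda)$.

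Splitting there is harmless, but your identification of the resulting thresholds is false.
\begin{itemize}
\item Solving $\beta_k<0$ puts $(N-2)^2+4(N-4)(\lambda-2)=(N-6)^2+4\lambda(N-4)$ under the square root, whereas \eqref{k0} has $-N^2+4N+4+4\lambda(N-4)$. These differ by $2(N-4)^2$.
\item $\beta_1\ge0$ is equivalent to $\lambda\le\frac{3N-9}{N-4}$, not to $\lambda\le\frac{N^2-2N-2}{2(N-4)}$; for $N=5$ these are $6$ and $\frac{13}{2}$.
\item For $N=3$, $\beta_1=\lambda$, so $\beta_1<0$ iff $\lambda<0$. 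No sign correction will produce $-\frac12$.
\end{itemize}
In fact $\beta_k=\frac{R(k,N)}{c_k}-\frac{(N-4)^2}{2}$, so $\{k\ge1:\beta_k<0\}$ contains $\{1,\dots,k_0(N,\lambda)\}$, in general strictly. As written, you would therefore get a minimum over a different index range, and a different case split, than in \eqref{sharp_const}.

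The missing observation is the identity
\[
\left(\frac{2}{N-4}\right)^2\left[c_k+\frac{N-4}{2}\left(\frac N2-\lambda\right)\right]^2=\left(\frac N2-\lambda\right)^2+\left(\frac{2}{N-4}\right)^2R(k,N).
\]
It shows that in modes with $\beta_k<0\le R(k,N)$ your bound is already at least $\left(\frac N2-\lambda\right)^2$. Only the modes with $R(k,N)<0$, i.e. $k\in\{1,\dots,k_0(N,\lambda)\}$, can lower the constant. With this added, your argument gives exactly \eqref{sharp_const}. That includes $C=\left(\lambda-\frac32\right)^2$ for $N=3$ and $-\frac12\le\lambda<0$, where your splitting puts $k=1$ in the negative case but $R(1,3)=2\left(\lambda+\frac12\right)\ge0$.
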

The excluded countable set of $\lambda$'s in Thm. \ref{Teorema 1} is needed to ensure the positivity of $C(\lambda, N)$. However, this set could be slightly refined with a demanding technical effort. Since this would be  irrelevant for the main regularity results, for simplicity we prefer to keep  Theorem \ref{Teorema 1} in the actual form.
The main regularity result is as follows. 

\begin{Th} \label{Teorema 1}
   Assume $N\geq 3$ and $\Omega \subset \mathbb{R}^{N}$ be a bounded domain with $ C^{2}$ boundary such that $0\in \Omega$. Let  $v$ be the weak solution to  problem \eqref{p1} in the sense of Definition \ref{def1}. \\
  (a)  If $N\geq 5$ then $ v \in H^{2}(\Omega)$ for any $\lambda <\frac{N-2}{2}$. \\
    (b)  Otherwise,  if $N\geq 3$ and $\lambda\in \left(\frac{N-2}{2}, \frac{N}{2}\right]$ the $H^2$-regularity of $v$ fails to be true. \\
    (c) If $ N \geq 5$ and $ \lambda \in [\frac{N^{2}}{2(N-2)},\frac{N^{2}-4}{2(N-4)}]$ the $H^{2}$-regularity of v fails to be true. 
In addition, if $ N=4$ and $ \lambda \in[4,\infty)$ or $ N = 3$ and $ \lambda \in (-\infty,-\frac{5}{2}] \cup [\frac{9}{2}, \infty)$ the $ H^{2}$-regularity of v fails to be true. 
\end{Th}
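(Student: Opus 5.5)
Part (a) goes by regularization plus a uniform a priori $H^2$ bound coming from Theorem \ref{HR_lambda}. Fix $\lambda<\frac{N-2}{2}$ with $N\geq 5$. We replace the singular drift by the bounded field $b_\eps(x)=\lambda\frac{x}{|x|^2+\eps^2}$ and $f$ by smooth $f_\eps\to f$ in $L^2(\Omega)$. The regularized problem $-\Delta v_\eps+b_\eps\cdot\nabla v_\eps=f_\eps$ has bounded smooth coefficients. Coercivity is uniform in $\eps$: integrating by parts,
\[\int_\Omega b_\eps\cdot\nabla\phi\,\phi=-\tfrac{\lambda}{2}\int_\Omega \operatorname{div} b_\eps\,\phi^2,\qquad \operatorname{div} b_\eps=\frac{(N-2)|x|^2+N\eps^2}{(|x|^2+\eps^2)^2}.\]
When $\lambda>0$ we must compare with $\frac{N-2}{|x|^2}$. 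The extra $\eps^2$ term has the form $\frac{2\eps^2}{(|x|^2+\eps^2)^2}$, which I bound by a weighted Hardy-type estimate, or absorb by choosing $b_\eps$ more cleverly, e.g. $b_\eps=\lambda\frac{x}{|x|^2}\min(1,|x|^2/\eps^2)$-type cutoffs with $\operatorname{div}b_\eps\le (N-2)/|x|^2$. Lax--Milgram then gives $v_\eps\in H_0^1$ with $\|v_\eps\|_{H^1}\le C\|f\|_2$ uniformly, and classical regularity gives $v_\eps\in H^2(\Omega)$.

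For the uniform $H^2$ estimate, localize with a cutoff $\eta\in C_c^\infty(B_r)$, $\eta=1$ near $0$, $B_{2r}\subset\Omega$. Away from the origin the coefficients are bounded uniformly in $\eps$, so standard interior and boundary $H^2$ estimates for $C^2$ domains give $\|(1-\eta)v_\eps\|_{H^2}\le C(\|f\|_2+\|v_\eps\|_{H^1})$. Near the origin, $w=\eta v_\eps$ is compactly supported and satisfies $-\Delta w+\lambda\frac{x\cdot\nabla w}{|x|^2}=g$ (modulo the regularization error) with $\|g\|_2\le C(\|f\|_2+\|v_\eps\|_{H^1})$. We apply Theorem \ref{HR_lambda}, extended by density to compactly supported $H^2$ functions, which is legitimate for $N\geq5$. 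For $\lambda<\frac{N-2}{2}\le\frac{N^2-2N-2}{2(N-4)}$ it gives the constant $(\frac N2-\lambda)^2>0$ and hence a bound on $\|\frac{x\cdot\nabla w}{|x|^2}\|_2$. Then $\|\Delta w\|_2\le\|g\|_2+|\lambda|\|\frac{x\cdot\nabla w}{|x|^2}\|_2$, and $\|D^2w\|_2=\|\Delta w\|_2$ on $\mathbb R^N$. To make this work uniformly in $\eps$ I would need the analogue of \eqref{HR_ineq1} for the regularized drift. I expect this to be the main obstacle. I would try to prove it with the same spherical-harmonic decomposition as in Theorem \ref{HR_lambda}, or else avoid it by applying \eqref{HR_ineq1} to $w_\eps$ and treating the difference $(b_\eps-\lambda x/|x|^2)\cdot\nabla w$, which is supported where $|x|\lesssim\eps$ and is controlled by $\frac{|x\cdot\nabla w|}{|x|^2}\mathbf 1_{|x|<\eps}$. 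Finally, weak compactness gives $v_\eps\rightharpoonup \tilde v$ in $H^2$, and passing to the limit in \eqref{20} together with uniqueness yields $\tilde v=v$.

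For (b) and (c) I would construct explicit counterexamples with radial (or spherical-harmonic) profiles $v=\eta(x)|x|^{\alpha}\Phi_k(\omega)$. For radial $v=|x|^\alpha$, the operator gives $-(\alpha(\alpha+N-2)-\lambda\alpha)|x|^{\alpha-2}$. We choose $\alpha$ as a root of $\alpha(\alpha+N-2-\lambda)+k(k+N-2)=0$ (for the $k$-th harmonic), so that $f=$ cutoff commutator terms lie in $L^2$, even smooth. We also need $v\in H_0^1$, i.e. $\alpha>1-\frac N2$, while $v\notin H^2$, i.e. $\alpha\le 2-\frac N2$. For $k=0$ the root $\alpha=\lambda-N+2$ lies in $(1-\frac N2,2-\frac N2]$ exactly when $\lambda\in(\frac{N-2}{2},\frac N2]$, which gives (b). Since $\lambda>\frac{N-2}{2}$ lies outside the coercive regime, uniqueness is not available there, so $v$ is simply a weak solution that fails $H^2$. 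For (c) and the $N=3,4$ ranges, the same computation with $k=1$ (the $x_i/|x|$ harmonics) yields the stated intervals after solving the quadratic inequalities in $\alpha$. This part is routine algebra.
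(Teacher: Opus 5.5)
\textbf{Verdict: genuine gap in (a).} In (a) you regularize the operator ($b_\varepsilon=\lambda\frac{x}{|x|^2+\varepsilon^2}$), whereas the paper regularizes only the data, and this choice is exactly where your argument fails. Uniform coercivity is not the problem, since $\frac{(N-2)|x|^2+N\varepsilon^2}{(|x|^2+\varepsilon^2)^2}\le\frac{N-2}{|x|^2}$ for $N\ge4$.

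The crux is a bound on $\|\frac{x\cdot\nabla w}{|x|^2}\|_2$, with $w=\eta v_\varepsilon$, that is uniform in $\varepsilon$, and you leave it open. For $\lambda\le0$ it is immediate. With $\rho=|x|^2+\varepsilon^2$, the mixed term in $\|-\Delta w+\lambda\frac{x\cdot\nabla w}{\rho}\|_2^2$ equals $\lambda\int|\nabla w|^2\frac{(4-N)|x|^2-(N-2)\varepsilon^2}{\rho^2}-4\lambda\int\frac{(x\cdot\nabla w)^2}{\rho^2}\ge0$, so $\|L_\varepsilon w\|_2\ge\|\Delta w\|_2$.

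For $\lambda>0$, which is the new range of the theorem, this term is negative and carries non-homogeneous weights. The power-weight one-dimensional Hardy inequalities behind Theorem~\ref{HR_lambda} then no longer apply, so you would need a new Hardy--Rellich inequality uniform in $\varepsilon$, which you have not supplied.

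Your fallback does not replace it. The error $\big(\lambda\frac{x}{|x|^2}-b_\varepsilon\big)\cdot\nabla w=\lambda\frac{\varepsilon^2}{|x|^2+\varepsilon^2}\frac{x\cdot\nabla w}{|x|^2}$ is not supported in $\{|x|\lesssim\varepsilon\}$. On $B_\varepsilon$ it is comparable to $\frac{x\cdot\nabla w}{|x|^2}$ itself, and the same is true for any bounded regularization. By scale invariance, nothing makes $\int_{B_\varepsilon}\frac{|x\cdot\nabla w|^2}{|x|^4}$ small. All you obtain is $(\frac N2-\lambda)\|\frac{x\cdot\nabla w}{|x|^2}\|_2\le\|g\|_2+|\lambda|\,\|\frac{x\cdot\nabla w}{|x|^2}\|_2$. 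This absorbs only when $|\lambda|<\frac N2-\lambda$, i.e. $\lambda<\frac N4$, which leaves $[\frac N4,\frac{N-2}{2})$ unproved.

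\textbf{The missing idea} is to keep the exact singular drift and regularize the data. Take $f_\varepsilon=f\theta_\varepsilon$, which vanishes on $B_\varepsilon$. Then:
\begin{enumerate}
\item $v_\varepsilon$ solves the homogeneous singular equation near $0$.
\item $v_\varepsilon$ is bounded there by the maximum principle, which holds for $\lambda<\frac{N-2}{2}$.
\item Testing $\operatorname{div}(|x|^{-\lambda}\nabla v_\varepsilon)=0$ against $\psi_n|x|^{\lambda-2}v_\varepsilon$ gives $\int\frac{|\nabla v_\varepsilon|^2}{|x|^2}<\infty$ near $0$ for $N\ge5$.
\item Hence $v_\varepsilon\in H^2$ by Calder\'on--Zygmund.
\end{enumerate}
This merely qualitative step is all that is needed. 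Theorem~\ref{HR_lambda} then applies to $\eta v_\varepsilon$ with no error term and gives the uniform bound with constant $(\frac N2-\lambda)^2$.

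\textbf{Sign error in (c).} Your indicial equation has the wrong sign. One has $-\Delta(r^\alpha\Phi_k)+\lambda\frac{x\cdot\nabla(r^\alpha\Phi_k)}{|x|^2}=\big(k(k+N-2)-\alpha(\alpha+N-2-\lambda)\big)r^{\alpha-2}\Phi_k$, so the condition is $\alpha(\alpha+N-2-\lambda)=k(k+N-2)$. With your sign, the $k=1$ roots are different (complex when $|N-2-\lambda|<2\sqrt{N-1}$) and do not produce the stated intervals.

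With the correct sign and $k=1$, setting $\alpha=1-\beta$ recovers the paper's $x_N|x|^{-\beta}$ and its intervals, with one exception. At $\lambda=\frac{N^2}{2(N-2)}$ (and at $\lambda=4$ for $N=4$, $\lambda=\frac92$ for $N=3$) the root is $\alpha=1-\frac N2$, so $v\notin H^1$ and these endpoints must be excluded. Your $k=0$ computation for (b) is correct and matches the paper's radial example.
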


This result was proved by Kim and Tsai in \cite{Kim-Tsai} for $\lambda <0$ in dimensions $ N\geq 5$. Our new contribution in Th. \ref{Teorema 1} arise in the cases $\lambda\in (0, \frac{N-2}{2})$. In this range of $\lambda$ the sharp $H^2$ regularity is the most surprising because the functional energy \eqref{energy_funct} becomes  increasingly less coercive as $\lambda$ approaches $\frac{N-2}{2}$.

\paragraph{\bf Main results for problem \eqref{p2}.} Definitely, the Hardy inequality \eqref{HI} ensures existence and uniqueness 
of weak solution to \eqref{p2} in the sense of Def. \eqref{def2} for any $\lambda>- \frac{(N-2)^2}{4}$ in which cases the associated quadratic form is coercive:   
\begin{equation}
    \int_{\Omega} |\nabla \phi|^2+\lambda \frac{\phi^2}{|x|^2} dx \geq \left(1+\min\{\lambda,0 \}\frac{4}{(N-2)^2}\right)\int_{\Omega} |\nabla \phi|^2 dx, \quad \forall \phi \in H_0^1(\Omega). 
\end{equation}

To obtain the main regularity results we need a parametric Rellich type inequality. Although the optimal characterization of the admissible values of the parameter $ \lambda$ was established by Metafune et al. \cite{Metafune2015} using spectral methods, we present a direct proof of a sufficient condition based on the spherical harmonic decomposition. This condition is completely adequate for our purposes and, in dimension $ N \geq 6$, yields  a larger admissible range for $\lambda$ than the sufficient condition stated in \cite[Proposition 2.2]{Metafune2015}.

 \begin{Th}\label{Ineq_lambda}
   Let $N\geq 5$. For any $\lambda \in (\frac{-N^{2}+2N+2}{4},\infty)\setminus\{-\frac{N(N-4)}{4}\}$ there exists $C(\lambda, N)>0$ such that  
   \begin{equation}
\int_{\rr^N}  \left | -
\Delta v + \lambda\frac{ v}{|x|^2} 
\right|^2  dx \geq c_2(\lambda, N) \int_{\rr^N} \frac{|v|^2}{|x|^4} dx, \quad \forall v\in C_c^\infty(\rr^N), \label{HR_ineq2}
   \end{equation}
   where 
   $$c_2(\lambda, N):=\left[\left(\frac{N^{2}}{4} +2\lambda\right)\left(\frac{N-4}{2}\right)^{2} + \lambda^{2} + 2 \lambda(N-4)\right].$$
\end{Th}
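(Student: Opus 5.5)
The plan is to expand the square and reduce \eqref{HR_ineq2} to two classical weighted inequalities, falling back on a spherical harmonic decomposition where that shortcut breaks down. A first observation guides the argument: the constant simplifies,
$$c_2(\lambda,N)=\Big(\lambda+\tfrac{N(N-4)}{4}\Big)^2 .$$
This explains why $\lambda=-\frac{N(N-4)}{4}$ is excluded, and shows that $c_2$ is exactly what the radial (zero) mode should produce. For $v\in C_c^\infty(\rr^N)$ we have
$$\int_{\rr^N}\Big|-\Delta v+\lambda\frac{v}{|x|^2}\Big|^2dx=\int_{\rr^N}|\Delta v|^2dx+2\lambda\int_{\rr^N}(-\Delta v)\frac{v}{|x|^2}dx+\lambda^2\int_{\rr^N}\frac{v^2}{|x|^4}dx .$$
Integrating by parts, and using $\int x\cdot\nabla(v^2)|x|^{-4}\,dx=-(N-4)\int v^2|x|^{-4}\,dx$, the cross term becomes
$$\int_{\rr^N}(-\Delta v)\frac{v}{|x|^2}dx=\int_{\rr^N}\frac{|\nabla v|^2}{|x|^2}dx+(N-4)\int_{\rr^N}\frac{v^2}{|x|^4}dx .$$
All integrals are finite because $N\ge5$.

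\textbf{Step 1 (the case $\frac{N^2}{4}+2\lambda\ge 0$).} I would invoke the Hardy--Rellich inequality $\int|\Delta v|^2\ge \frac{N^2}{4}\int\frac{|\nabla v|^2}{|x|^2}$, valid for $N\ge5$. It can be checked directly by spherical harmonics, with the radial mode giving the constant $\frac{N^2}{4}$. Combining it with the expansion gives
$$\int\Big|-\Delta v+\lambda\frac{v}{|x|^2}\Big|^2\ge\Big(\frac{N^2}{4}+2\lambda\Big)\int\frac{|\nabla v|^2}{|x|^2}+\big(\lambda^2+2\lambda(N-4)\big)\int\frac{v^2}{|x|^4}.$$
Since the first coefficient is nonnegative, the weighted Hardy inequality $\int\frac{|\nabla v|^2}{|x|^2}\ge\big(\frac{N-4}{2}\big)^2\int\frac{v^2}{|x|^4}$ applies and yields exactly $c_2(\lambda,N)$. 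This settles every $\lambda\ge-\frac{N^2}{8}$.

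\textbf{Step 2 (the range $\frac{-N^2+2N+2}{4}<\lambda<-\frac{N^2}{8}$).} Here the shortcut fails, so I would decompose $v=\sum_k v_k(r)\phi_k(\sigma)$ with $-\Delta_{S^{N-1}}\phi_k=c_k\phi_k$, $c_k=k(k+N-2)$. The operator acts diagonally, as $L_kv_k=-v_k''-\frac{N-1}{r}v_k'+\frac{c_k+\lambda}{r^2}v_k$, so it suffices to prove $\int_0^\infty |L_kv_k|^2r^{N-1}dr\ge c_2\int_0^\infty v_k^2r^{N-5}dr$ for each $k$.

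To do this I would substitute $r=e^t$ and $v_k=r^{-\frac{N-4}{2}}w(t)$. Then $r^2L_kv_k=r^{-\frac{N-4}{2}}\,\big(-w''-2w'+(c_k+\lambda+a)w\big)$ with $a=\frac{N(N-4)}{4}$. By Plancherel in $t$, the inequality becomes the pointwise symbol bound
$$\big(\xi^2+c_k+\lambda+a\big)^2+4\xi^2\ \ge\ c_2\qquad\text{for all }\xi\in\rr .$$
The left side is a quadratic in $s=\xi^2$, namely $(s+c_k+\lambda+a)^2+4s$, whose minimum over $s\ge0$ is attained at $s=0$ exactly when $c_k+\lambda+a+2\ge0$.
\begin{itemize}
\item For $k=0$ the condition reads $\lambda+a\ge-2$. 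When it holds, the minimum is $(\lambda+a)^2=c_2$. When it fails, the minimum is $-4(\lambda+a)-4$, and comparing this with $c_2$ is what produces the lower threshold on $\lambda$.
\item For $k\ge1$ the quantity $c_k+\lambda+a\ge N-1+\lambda+a$ is larger, so the minimum is at least $(\lambda+a+c_k)^2$ whenever the vertex sits at $s\le0$. Because $c_k\ge N-1$, one should check that this exceeds $(\lambda+a)^2$ throughout the admissible range.
\end{itemize}

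The main obstacle is this last bookkeeping: verifying that for $\lambda>\frac{-N^2+2N+2}{4}$ every mode's minimum (vertex case or endpoint case) dominates $(\lambda+a)^2$. In particular I must check that the stated lower bound on $\lambda$ is precisely what keeps the $k=0$ vertex case, and possibly $k=1$, from dropping below $c_2$. I would first write out the $k=0$ comparison explicitly, and adjust the claimed threshold if the algebra gives a slightly different one.
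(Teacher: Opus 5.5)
\textbf{Relation to the paper.} Your Step 1 is the paper's proof in compressed form. The paper bounds $\int|\Delta v|^2$ mode by mode with the one-dimensional Hardy inequalities, then applies $\int_0^\infty r^{N-3}|v_k'|^2dr\ge\frac{(N-4)^2}{4}\int_0^\infty r^{N-5}v_k^2dr$ to the term with coefficient $\frac{N^2}{4}+2\lambda$. Its lower endpoint $\frac{-N^2+2N+2}{4}$ comes only from making the coefficient $\frac{N^2-2N-2}{2}+2\lambda$ of the $c_k$-terms positive. The sign of $\frac{N^2}{4}+2\lambda$ is never checked, so the paper's argument, like your Step 1, only covers $\lambda\ge-\frac{N^2}{8}$. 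You were right to flag this. Your Mellin reduction in Step 2 is correct, and unlike the paper's estimates it is sharp.

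\textbf{The gap.} The problem is the comparison you postpone, and it cannot come out in your favour. Put $m=\lambda+\frac{N(N-4)}{4}$, so $c_2=m^2$. The vertex value is $-4m-4=m^2-(m+2)^2$, which is strictly smaller than $c_2$ whenever $m<-2$. The radial constant $\inf_{\xi}[(\xi^2+m)^2+4\xi^2]$ is approached by admissible functions: take $v=|x|^{-\frac{N-4}{2}}w(\log|x|)\in C_c^\infty(\rr^N\setminus\{0\})$ with $w\in C_c^\infty(\rr)$ and $\hat w$ concentrated near $\xi^2=-(m+2)$. Hence the inequality with constant $c_2$ holds if and only if $m\ge-2$, i.e.\ $\lambda\ge\frac{-N^2+4N-8}{4}$. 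In that range the modes $k\ge1$ are harmless: $m+c_k\ge\max\{m,2\}\ge|m|$ and $m+c_k\ge-2$, so their minimum sits at $\xi=0$ and equals $(m+c_k)^2\ge m^2$.

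\textbf{Consequence for the statement.} The threshold $\frac{-N^2+4N-8}{4}$ coincides with the stated $\frac{-N^2+2N+2}{4}$ only for $N=5$. For $N\ge6$ it is larger by $\frac{N-5}{2}$, and on $\left(\frac{-N^2+2N+2}{4},\frac{-N^2+4N-8}{4}\right)$ the displayed inequality is false. For example, with $N=6$ and $\lambda=-\frac{21}{4}$ one has $c_2=\frac{81}{16}$, but radial test functions give ratios approaching $5$. So no bookkeeping completes Step 2 for the statement as written. Carried out, your argument proves the theorem for $N=5$, where the vertex case never occurs. For $N\ge6$ it proves it exactly for $\lambda\ge-\frac{N(N-4)}{4}-2$. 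In the remaining band the sharp constant is $c_2-\left(\lambda+\frac{N(N-4)}{4}+2\right)^2$; one checks that the $k\ge1$ modes do not go lower there. This constant is positive but below $c_2$. Only the weaker reading ``some $C(\lambda,N)>0$'' from the statement's first line survives on the whole stated range.
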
  
This result extends also the parameter range given by Vișan  in \cite[Proposition 3.2]{{Killip2018}}.

Next we state the main regularity result for problem \eqref{p2}. 

\begin{Th} \label{Teorema 2}
   Assume $N\geq 3$ and $\Omega \subset \mathbb{R}^{N}$ be a bounded domain with $ C^{2}$ boundary such that $0\in \Omega$. Let  $v$ be the weak solution to  problem \eqref{p2} in the sense of Definition \ref{def2}. \\
  (a)  If $N\geq 5$ then $ v \in H^{2}(\Omega)$ for any $\lambda > \left(-\frac{N(N-4)}{4}, \infty\right)$. \\
    (b)  Otherwise,  if $N\geq 3$ and $\lambda\in \left(-\frac{(N-2)^2}{4}, -\frac{N(N-4)}{4}\right]$ the $H^2$-regularity of $v$ fails to be true.  
\end{Th}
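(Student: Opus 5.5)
Part (a) will be proved by regularizing the potential and the data, deriving uniform a priori $H^2$ estimates from the Rellich-type inequality of Theorem \ref{Ineq_lambda}, and passing to the limit. Fix $\lambda>-\frac{N(N-4)}{4}$, $N\ge5$. For $\eps>0$ let $v_\eps\in H_0^1(\Omega)$ solve $-\Delta v_\eps+\lambda\frac{v_\eps}{|x|^2+\eps^2}=f_\eps$, with $f_\eps\in C_c^\infty(\Omega)$, $f_\eps\to f$ in $L^2$. The potential is bounded, so classical theory gives $v_\eps\in H^2(\Omega)\cap H_0^1(\Omega)$. The Hardy inequality \eqref{HI} gives uniform $H_0^1$ bounds, since the regularized quadratic form is coercive uniformly in $\eps$. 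The goal is then a uniform bound $\|v_\eps\|_{H^2}\le C\|f\|_{L^2}$. Weak compactness yields $v_\eps\rightharpoonup \tilde v$ in $H^2$. Passing to the limit in the weak formulation, where the term $\frac{v_\eps}{|x|^2+\eps^2}\to \frac{\tilde v}{|x|^2}$ is handled by dominated convergence and Hardy, shows that $\tilde v$ solves \eqref{20_b}. Uniqueness then gives $v=\tilde v\in H^2(\Omega)$.

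For the uniform estimate, localize with a cutoff $\eta\in C_c^\infty(B_{2r})$, $\eta\equiv1$ on $B_r\subset\subset\Omega$. Away from the origin the potential is bounded, so standard boundary regularity in the $C^2$ domain gives $\|(1-\eta)v_\eps\|_{H^2}\le C(\|f\|_2+\|v_\eps\|_{H^1})$. Near the origin, $w=\eta v_\eps\in H_0^2(B_{2r})$ (note $N\ge5$) satisfies
\[
-\Delta w+\lambda\frac{w}{|x|^2}=g_\eps,\qquad g_\eps = \eta f_\eps-2\nabla\eta\cdot\nabla v_\eps-v_\eps\Delta\eta+\lambda\Big(\frac{1}{|x|^2}-\frac1{|x|^2+\eps^2}\Big)\eta v_\eps .
\]
The idea is to apply Theorem \ref{Ineq_lambda} (extended by density to $H_0^2$) to get $\int w^2/|x|^4\le c_2^{-1}\|g_\eps\|_2^2$, and then $\|\Delta w\|_2\le\|g_\eps\|_2+|\lambda|\,\|w/|x|^2\|_2$. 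Since $\|D^2w\|_2=\|\Delta w\|_2$ on $H_0^2$, this would close the estimate.

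The main obstacle is the last term of $g_\eps$, which is not small. It is cleaner to apply the inequality directly with the regularized potential: one needs a version of \eqref{HR_ineq2} uniform in $\eps$,
\[
\int\Big|-\Delta w+\lambda\frac{w}{|x|^2+\eps^2}\Big|^2\ge c\int\frac{w^2}{(|x|^2+\eps^2)^2}.
\]
The first attempt will be to redo the proof of Theorem \ref{Ineq_lambda} by expanding the square: $\int|\Delta w|^2+2\lambda\int\frac{|\nabla w|^2}{|x|^2+\eps^2}+2\lambda\int w\nabla w\cdot\nabla(|x|^2+\eps^2)^{-1}+\lambda^2\int\frac{w^2}{(|x|^2+\eps^2)^2}$, and then estimating each term by weighted Hardy/Rellich inequalities with weights $(|x|^2+\eps^2)^{-1}$, which are uniform in $\eps$ because they come from the same spherical-harmonic/radial identities. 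Near $\lambda\approx -N(N-4)/4$ the cross terms have the bad sign, so this must be done carefully. If that fails, the fallback is to keep the exact potential and obtain $H^2$ bounds for $v$ itself via difference quotients on annuli combined with the a priori bound. A further point to watch is that Theorem \ref{Ineq_lambda} only covers $\lambda>\frac{-N^2+2N+2}{4}$, which is below $-\frac{N(N-4)}4$ (indeed $-N^2+2N+2<-N^2+4N$), so the whole range in (a) is covered.

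For part (b), the plan is to exhibit an explicit counterexample. Take $v=|x|^{\gamma}\eta(x)$ with $\gamma=-\frac{N-2}{2}+\sqrt{\frac{(N-2)^2}{4}+\lambda}$, the larger root of $\gamma(\gamma+N-2)=\lambda$. Then $-\Delta|x|^\gamma+\lambda|x|^{\gamma-2}=0$, so $f=-2\nabla\eta\cdot\nabla|x|^\gamma-|x|^\gamma\Delta\eta$ is smooth and $v\in H_0^1$ because $\gamma>-\frac{N-2}{2}$. For $\lambda\le-\frac{N(N-4)}{4}$ we have $\gamma\le 2-\frac N2$, so $D^2v\sim|x|^{\gamma-2}$ is not in $L^2$ near $0$, since $2(\gamma-2)+N\le0$. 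Thus $v\notin H^2$.
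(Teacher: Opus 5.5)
\textbf{Part (a) has a genuine gap.} Everything in your plan is routine except the uniform-in-$\varepsilon$ inequality
\[
\int\Big|-\Delta w+\lambda \frac{w}{|x|^2+\varepsilon^2}\Big|^2dx\ \ge\ c\int\frac{w^2}{(|x|^2+\varepsilon^2)^2}dx ,
\]
and you do not prove it. You only announce an attempt and a fallback, and it does not come from ``the same identities'' as Theorem \ref{Ineq_lambda}. Write $V_\varepsilon=(|x|^2+\varepsilon^2)^{-1}$. The cross term is
\[
2\lambda\int(-\Delta w)V_\varepsilon w\,dx=2\lambda\Big[\int V_\varepsilon|\nabla w|^2dx+\int w^2\,\frac{N\varepsilon^2+(N-4)|x|^2}{(|x|^2+\varepsilon^2)^3}dx\Big].
\]
For $\lambda\ge 0$ this is nonnegative and your estimate closes at once.

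For $\lambda\in(-\frac{N(N-4)}{4},0)$ it does not. The sharp threshold in Theorem \ref{Ineq_lambda} comes from an exact cancellation into the perfect square $(\lambda+\frac{N(N-4)}{4})^2$. With $V_\varepsilon$ the weights are no longer homogeneous and no longer match the radial Hardy inequalities. Also, the zero-order weight above is $\approx N V_\varepsilon^2$ rather than $(N-4)V_\varepsilon^2$ for $|x|\ll\varepsilon$. So that cancellation is not available.

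Your perturbative route through $g_\varepsilon$ does not rescue this. Since $0\le |x|^{-2}-V_\varepsilon\le |x|^{-2}$, with near-equality for $|x|\ll\varepsilon$, it closes only if $|\lambda|<\lambda+\frac{N(N-4)}{4}$, i.e.\ for $\lambda>-\frac{N(N-4)}{8}$.

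The uniform inequality may well be true; by scaling it suffices to take $\varepsilon=1$. Proving it would still need a separate argument, for example compactness plus exclusion of zero-energy solutions with $V_1w\in L^2$. That is the real missing content.

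The fallback is circular. Rescaled interior estimates on dyadic annuli give only $\|D^2v\|_{L^2}^2\lesssim\|f\|_{L^2}^2+\int v^2|x|^{-4}dx$, so you again need the weighted bound.

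The paper's idea is to regularize the data, not the potential. It takes $f_\varepsilon=f\theta_\varepsilon$ vanishing on $B_\varepsilon$ and keeps the exact potential, so $v_\varepsilon$ solves $-\Delta v_\varepsilon+\lambda v_\varepsilon/|x|^2=0$ near $0$. Apply the maximum principle (valid for $\lambda>-\frac{(N-2)^2}{4}$) to $C|x|^{\gamma_+}\mp v_\varepsilon$, where $\gamma_+=-\frac{N-2}{2}+\sqrt{\frac{(N-2)^2}{4}+\lambda}$. This gives $|v_\varepsilon|\le C_\varepsilon|x|^{\gamma_+}$ near $0$. Hence $v_\varepsilon/|x|^2\in L^2$ exactly when $\lambda>-\frac{N(N-4)}{4}$, so $v_\varepsilon\in H^2(\Omega)$ and $\eta v_\varepsilon\in H^2_0$. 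Only this qualitative information depends on $\varepsilon$. The exact inequality of Theorem \ref{Ineq_lambda} then gives $\int v_\varepsilon^2|x|^{-4}\le C\|f\|_{L^2}^2$ uniformly. Fatou, the convergence $v_\varepsilon\to v$ in $H^1_0$, and Calder\'on--Zygmund conclude.

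\textbf{Part (b)} is fine, and cleaner than the paper's radial example with source $|x|^\alpha$, with one caveat. The claim $|D^2|x|^\gamma|\sim|x|^{\gamma-2}$ requires $\gamma\ne0$. For $N\in\{3,4\}$ the interval $(-\frac{(N-2)^2}{4},-\frac{N(N-4)}{4}]$ contains $\lambda=0$, where $\gamma_+=0$ and $v=\eta$ is smooth. There $H^2$-regularity actually holds, since the operator is just the Laplacian. That point must be excluded; the statement as written, and the paper's example, are wrong there too.
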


These findings contribute to a more complete understanding of the regularity theory for elliptic equations with critical singular coefficients, and provide a unified framework bringing perturbative methods and sharp threshold phenomena.

The paper is organized as follows. In Section 2, we prove the $\lambda$-parametric Hardy–Rellich-type inequality stated in Theorem \ref{HR_lambda}, together with the Rellich inequality in Theorem \ref{Ineq_lambda}. These inequalities play a crucial role in establishing the $H^2$-regularity results in Theorems \ref{Teorema 1}–\ref{Teorema 2}. In Section \ref{section3}, we construct examples that illustrate and justify assertions $(b)$ and $(c)$ of Theorem \ref{Teorema 1}. In particular, we provide an example showing that $H^2$-regularity for problem \eqref{p1} fails in bounded domains when $\lambda > \frac{N-2}{2}$. This is in contrast with the whole-space setting $\mathbb{R}^N$, where Kato's result shows that $\lambda \in \left(-\frac{N}{2},\frac{N}{2}\right)$ ensures standard elliptic $H^2$-regularity. Finally, in the last section, we formulate maximum principles that are instrumental in the proofs of our main results. 

\textbf{The main ideas of the proof of Theorem\eqref{Teorema 1}}

\textbf{Step 1.} We consider an approximation Dirichlet problem 
\begin{align}
  \left\{\begin{array}{ll} 
-\Delta v_{\varepsilon} + \lambda \frac{x\nabla v_{\varepsilon}}{|x|^{2}}= f_{\varepsilon}(x),  &		x\textrm{ in} \quad \Omega\\
v_{\varepsilon}(x)=	0, & x \textrm{ on }  \partial \Omega,\\ 
	\end{array}\right.
\end{align}
where $ f_{\varepsilon}$ is a suitable truncation of the source $ f \in L^{2}(\Omega)$ with the property that $ f_{\varepsilon}$ vanishes near the origin. 

\textbf{Step 2.}
By Lax -Milgram theorem and standard elliptic regularity we prove that $ v_{\varepsilon} \in H_{0}^{1}(\Omega) \cap   C^{\infty}(B_{\varepsilon}\setminus \{0\})  $, where $ v_{\varepsilon}$ satisfies the homogeneous equation $-\Delta v_{\varepsilon} + \lambda \frac{x\nabla v_{\varepsilon}}{|x|^{2}} =0 $ near the origin.

\textbf{Step 3.} Using the maximum principle we compare $ v_{\varepsilon}$ with the radial solutions of the previous homogeneous equation and  we show that $ v_{\varepsilon} \in L^{\infty}(B_{\delta}(0)),$ where $ \delta < \varepsilon$.

\textbf{Step 4.} We recast the equation with drift term in this divergence form $ - |x|^{\lambda}\mathrm{div}(|x|^{-\lambda}\nabla v_{\varepsilon}) = 0$ and employing a cut-off argument we prove that $ v_{\varepsilon} \in H^{2}(\Omega)$, for any $ \varepsilon >0$. 

\textbf{Step 5.} Hardy-Rellich inequality \eqref{HR_ineq1} involves that the integral $\int_{\Omega} \frac{|x\nabla v_{\varepsilon}|^{2}}{|x|^{4}}dx $ is uniformly bounded in $ \varepsilon$ and this ensures that $ v_{\varepsilon}$ is uniformly bounded in $ H^{2}(\Omega)$. By standard arguments this implies that $ v \in H^{2}(\Omega)$.  

The proof of the Theorem \ref{Teorema 2} is somehow similar. As in the case of problem \ref{p2} we cannot prove that $ v_{\varepsilon}$ is bounded near the singularity. For this reason, we will directly compare $ v_{\varepsilon}$ with the radial solution to the homogeneous equation and showing that integral $ \int_{\Omega}\frac{v^{2}_\varepsilon}{|x|^{4}}dx $ is finite we establish that $ v_{\varepsilon} \in H^{2}(\Omega)$.  This approach can also be applied in the proof of \eqref{Teorema 1}. However, we have chosen to use an argument based on writing the equation with the drift term in divergence form.

\section{Parametric Hardy and Rellich type inequalities}

To establish Hardy Rellich type inequality, we shall make use of the following proposition, whose proof yields a complete algebraic characterization of the sequence of parameters $ \lambda_{n}$ via a generalized Pell equation. 

\begin{prop}\label{prop 1.1} There exists a sequence $ \{\lambda_{n}\}_{n\in \mathbb{N}} \subset \left(\frac{N^2-2N-2}{2(N-4)},\infty\right)$ such that  
\begin{equation}\label{claim}
  \min_{k=1, k_0(N, \lambda_{n})} \left[k(k+N-2) +\frac{N-4}{2}\left(\frac{N}{2}-\lambda_{n}\right)\right]^{2}=0, 
\end{equation}
where $ k_{0}(N,\lambda)$ is defined in \eqref{k0}.

Conversely, if \eqref{claim} holds for some parameter  $\lambda$ then there exists $ n \leq k_{0}(N,\lambda)$ such that $$ \lambda =\frac{(2n+N-2)^2-4} {2(N-4)}. $$
    \end{prop}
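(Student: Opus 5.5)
The plan is to reduce \eqref{claim} to an explicit quadratic condition in the integer $k$ and solve it.

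\textbf{Step 1: an equivalent quadratic equation in $k$.} For $N>4$, a term in the minimum vanishes exactly when $k(k+N-2)=\frac{N-4}{2}\left(\lambda-\frac N2\right)$ for some integer $1\le k\le k_0(N,\lambda)$. Completing the square gives
\begin{equation*}
k(k+N-2)=\left(k+\tfrac{N-2}{2}\right)^2-\tfrac{(N-2)^2}{4}.
\end{equation*}
Hence the condition becomes
\begin{equation*}
(2k+N-2)^2=(N-2)^2+2(N-4)\lambda-N(N-4)=2(N-4)\lambda+4,
\end{equation*}
that is, $\lambda=\frac{(2k+N-2)^2-4}{2(N-4)}$. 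Writing $m=2k+N-2$ and $D=2(N-4)\lambda+4$, the problem is $m^2-D=0$ with $m\equiv N \pmod 2$. This is the degenerate Pell-type equation $m^2-2(N-4)\lambda=4$, read for rational or real $\lambda$. So the characterization is simply that $D$ is the square of an integer of the right parity.

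\textbf{Step 2: existence (first claim).} Define $\lambda_n:=\frac{(2n+N-2)^2-4}{2(N-4)}$ for $n\in\nn^\star$. Two things must be checked.
\begin{itemize}
\item[(i)] $\lambda_n$ exceeds the threshold $\frac{N^2-2N-2}{2(N-4)}$. This is equivalent to $(2n+N-2)^2>N^2-2N+2$. For $n\ge1$ one has $(N+2n-2)^2\ge N^2$, and $N^2>N^2-2N+2$ for $N\ge 2$, so the inequality holds.
\item[(ii)] $n\le k_0(N,\lambda_n)$, so that $k=n$ is admissible in the minimum. Compute $4\lambda_n(N-4)-N^2+4N+4=2(2n+N-2)^2-4-N^2+4N+4$. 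One should double check the normalization in \eqref{k0} against this. Since the root of $k(k+N-2)=\frac{N-4}{2}(\lambda-\frac N2)$ is $k^\ast=\frac{-(N-2)+\sqrt{2(N-4)\lambda+4}}{2}$, the intended $k_0$ is $\lceil k^\ast\rceil$ (or $k^\ast$ itself when it is an integer). At $\lambda=\lambda_n$ this root is exactly $k^\ast=n$, so $k_0=n$ and the term $k=n$ vanishes.
\end{itemize}
Then the minimum in \eqref{claim} is $0$, and the sequence is increasing in $n$.

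\textbf{Step 3: converse.} If the minimum is $0$, some admissible $k=n\le k_0(N,\lambda)$ makes the bracket vanish. Step 1 then gives $\lambda=\frac{(2n+N-2)^2-4}{2(N-4)}$.

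\textbf{Main obstacle.} The main difficulty is the bookkeeping in \eqref{k0}. Its discriminant expression has to be reconciled with the exact root $k^\ast$, so that $k_0(N,\lambda_n)=n$ exactly and the index $k=n$ genuinely lies in the range of the minimum. I would handle this by rewriting $k_0$ as the ceiling of the positive root of $k(k+N-2)-\frac{N-4}{2}(\lambda-\frac N2)$. If the displayed formula differs by a normalization, I would state the proposition with that root, which is what the Hardy--Rellich argument in Theorem~\ref{HR_lambda} actually uses.
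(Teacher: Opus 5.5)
Your Steps 1, 2(i) and 3 are fine, but Step 2(ii) has a genuine gap. You never check that the index $k=n$ lies in the range $1\le k\le k_0(N,\lambda_n)$ for the $k_0$ actually defined in \eqref{k0}. Instead you replace $k_0$ by $\lceil k^\ast\rceil$ and assert $k_0(N,\lambda_n)=n$.

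There is no normalization discrepancy to repair:
\begin{itemize}
\item The quantity $\delta=4\lambda(N-4)-N^2+4N+4$ in \eqref{k0} is the discriminant of $k^2+(N-2)k+(N-4)\left(\frac{N}{2}-\lambda\right)$, with no factor $\frac12$.
\item Formula \eqref{k0} is just the explicit form of \eqref{k_00}: $k_0$ is the last index with $R(k,N)<0$.
\item This is what the Hardy--Rellich proof needs, because it uses $R(k,N)\ge 0$ for $k>k_0$. Your $\lceil k^\ast\rceil$ does not have this property.
\end{itemize}
For example, take $N=5$, $n=2$. Then $\lambda_2=\frac{45}{2}$, $k^\ast=2$ and $\delta=89$, so $k_0(5,\lambda_2)=3\neq n$. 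Indeed $R(3,5)=18\cdot(18-20)<0$ even though $3>\lceil k^\ast\rceil$. So the identity $k_0(N,\lambda_n)=n$ is false, and redefining $k_0$ in the statement is not an option. Your arithmetic also slipped: $4\lambda_n(N-4)=2(2n+N-2)^2-8$, so $\delta(\lambda_n)=2(2n+N-2)^2-(N-2)^2$.

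What you need is only $n\le k_0(N,\lambda_n)$. The minimum in \eqref{claim} is over nonnegative terms, so it vanishes as soon as one admissible term does. This follows in one line. We have $\delta(\lambda_n)-(2n+N-2)^2=(2n+N-2)^2-(N-2)^2=4c_n>0$, so $\frac{-N+\sqrt{\delta(\lambda_n)}}{2}>n-1$. Both branches of \eqref{k0} then give an integer $k_0\ge n$. Equivalently, $c_n+(N-4)\left(\frac{N}{2}-\lambda_n\right)=-c_n<0$, so $n$ belongs to the set in \eqref{k_00}.

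This is essentially the paper's argument. The paper checks $n\le k_0$ directly from \eqref{k0} via $(N-2)^2\le(2n+N-2)^2$. It treats the branch where $\frac{-N+\sqrt{\delta}}{2}$ is an integer separately, through a Pell-equation parametrization; the inequality above makes that unnecessary. With this replacement of Step 2(ii), your proof is complete.
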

 
\begin{proof}[Proof of Theorem \eqref{HR_lambda}]
We first have 
\begin{align}
    \|L_{\lambda}\|_{L^{2}(\mathbb{R}^{n})}^{2}&= \int_{\rr^N}  \left | -
\Delta v + \lambda\frac{x
\cdot \nabla v}{|x|^2} 
\right|^2  dx\nonumber\\
    & =\int_{\mathbb{R}^{N}}|\Delta v |^{2}dx + \lambda^{2}\int_{\mathbb{R}^{N}}\left|\frac{x\cdot \nabla v }{|x|^{2}}\right|^{2} dx + 2 \lambda \int_{\mathbb{R}^{N}}- \Delta{v }\frac{x\cdot \nabla v }{|x|^{2}} dx.  \label{mixt}
\end{align}

Next we compute separately the mixt term $ I := \int_{\mathbb{R}^{n}}-\Delta v \frac{x\cdot \nabla v }{|x|^{2}}dx $ in \eqref{mixt}. Using Einstein's summation convention by integration by parts we successively obtain  
\begin{align}
     I =& \int_{\mathbb{R}^{N}}-\Delta v \frac{x\cdot \nabla v }{|x|^{2}}dx = - \int_{\mathbb{R}^{N}}v_{x_{i}x_{i}}\frac{x_{j}v_{x_{j}}}{|x|^{2}}dx = \int_{\mathbb{R}^{N}}v_{x_{i}}\left(\frac{x_{j}v_{x_{j}}}{|x|^{2}}\right)_{x_{i}} \nonumber \\
     & =\int_{\mathbb{R}^{N}}v_{x_{i}}\left(\delta_{ij}v_{x_j}|x|^{-2}+ x_{j}v_{x_{j}x_{i}}|x|^{-2} -2 |x|^{-4}x_{j}v_{x_{j}}x_{i}\right) dx \nonumber \\
     & = \int_{\mathbb{R}^{N}}\frac{|\nabla v |^{2}}{|x|^{2}} dx + \int_{\mathbb{R}^{N}}\left(v_{x_{i}}x_{j}v_{x_{j}x_{i}}|x|^{-2}  -2|x|^{-4}x_{j}v_{x_{j}}x_{i}v_{x_{i}} \right)dx \nonumber \\
     &= \int_{\mathbb{R}^{N}}\frac{|\nabla v |^{2}}{|x|^{2}}dx + \frac{1}{2}\int_{\mathbb{R}^{N}}x_{j}(v^{2}_{x_{i}})_{x_{j}}|x|^{-2}dx - 2 \int_{\mathbb{R}^{n}}\frac{|x\cdot \nabla v |^{2}}{|x|^{4}} dx \nonumber \\
     & = \int_{\mathbb{R}^{N}}\frac{|\nabla v |^{2}}{|x|^{2}}dx - \frac{1}{2}\int_{\mathbb{R}^{N}}v^{2}_{x_{i}}
     \left(x_{j}|x|^{-2}
     \right)_{x_{j}}dx-2\int_{\mathbb{R}^{N}}\frac{|x\cdot \nabla v |^{2}}{|x|^{4}}dx \nonumber \\
     & = \int_{\mathbb{R}^{N}}\frac{|\nabla v |^{2}}{|x|^{2}}dx -\frac{N-2}{2}\int_{\mathbb{R}^{N}}\frac{|\nabla v |^{2}}{|x|^{2}}dx - 2\int_{\mathbb{R}^{N}}\frac{|x\cdot \nabla v |^{2}}{|x|^{4}}dx \nonumber \\
     & = \frac{4-N}{2}\int_{\mathbb{R}^{N}}\frac{|\nabla v |^{2}}{|x|^{2}}dx - 2 \int_{\mathbb{R}^{N}}\frac{|x\cdot \nabla v |^{2}}{|x|^{4}}dx.  \label{mixt_final}
\end{align}
Combining \eqref{mixt} and \eqref{mixt_final}  we get that 
\begin{align}\label{eq50}
      |L_{\lambda}v |^{2}_{L^{2}(\mathbb{R}^{N})} =\int_{\mathbb{R}^{N}}|\Delta v |^{2}dx +(\lambda^{2} -4 \lambda)\int_{\mathbb{R}^{N}}\frac{|x \cdot \nabla v |^{2}}{|x|^{4}}dx + \lambda(4-N)\int_{\mathbb{R}^{N}}\frac{|\nabla v |^{2}}{|x|^{2}}dx. 
\end{align}
Next  we use spherical harmonics decomposition to expand $ v  $ as 
$$ v (x) = v (r\sigma) = \sum_{k=0}^{\infty}v_k(r)\phi_{k}(\sigma),$$
where the family $ \{\phi_{k}\}_{k \ge 0}$ is an othonormal basis in $ L^{2}(S^{n-1})$ formed by spherical harmonic functions $ \phi_{k}$ of degree k. In fact, $ \phi_{k}$ are smooth eigenfunctions of the Laplace Beltrami operator $ \Delta_{S^{n-1}}$ with the corresponding eigenvalues $ c_{k} = k(k+N-2)$ as in \cite[Theorem C.4.1]{Dupaigne}). In fact, we have the following properties  
\begin{align}
    \left\{\begin{array}{ll} 
-\Delta_{S^{n-1}}\phi_{k}  = c_{k}\phi_{k},  &	\textrm{ on }   S^{n-1}\\[3pt]
	\int_{S^{n-1}}\nabla_{S^{n-1}}\phi_{k}\cdot \nabla_{S^{n-1}}\phi_{l}d\sigma =c_{k}\int_{S^{n-1}}\phi_{k}\phi_{l}d\sigma = c_{k}\delta_{lk}, &    k, l \in \mathbb{N},\\ 
	\end{array}\right.
\end{align}
where $ \delta_{lk}$ represents the Kronecker symbol. 

Taking into account that $ \Delta = \partial_{rr}^{2}+ \frac{N-1}{r}\partial_{r} + \Delta_{S^{n-1}}$ we get that (for details, see e.g. \cite{Cazacu3})
\begin{align}\label{eq20}
    \int_{\mathbb{R}^{n}}|\Delta v |^{2}dx &= \sum_{k=0}^{\infty}
    \Bigg(\int^{\infty}_{0}r^{N-1}|v^{
    \prime\prime
    }_{k}|^{2}dr + (N-1 + 2 c_{k})\int^{\infty}_{0}r^{N-3}|v_k^\prime|^{2}dr \nonumber \\
    & + \left(c_{k}^{2}+ 2 c_{k}(N-4)\right)\int^{\infty}_{0}r^{N-5}v_k^{2}dr \Bigg). 
\end{align}
Similarly, we have 
\begin{equation}
     \int_{\mathbb{R}^{N}}\frac{|\nabla v |^{2}}{|x|^{2}} dx = \sum^{\infty}_{k=0}\left(\int^{\infty}_{0}r^{N-3}|v_k^\prime|^{2}dr + c_{k}\int_{0}^{\infty}r^{N-5}v_k^{2}dr\right) \label{form2}
\end{equation} 
and 
\begin{equation}
\int_{\mathbb{R}^{N}}\frac{|x\cdot \nabla v|^{2}}{|x|^{4}}dx = \sum^{\infty}_{k=0} \int^{\infty}_{0}r^{N-3}|v_k^\prime|^{2}dr. \label{form3}
\end{equation}

In view of \eqref{eq20}-\eqref{form3} identity \eqref{eq50} becomes 
\begin{align}
    \|L_\lambda v\|_{L^2(\rr^N)}^2  &= \sum_{k=0}^{\infty} \Bigg \{ \int^{\infty}_{0} r^{N-1}|v_k^{\prime\prime}|^{2}dr \nonumber\\
    &+ (N-1+ 2 c_{k} +
    \lambda (\lambda-N))\int^{\infty}_{0}r^{N-3}|v_k^{\prime}|^{2}dr\nonumber\\
    & + c_k(c_k +(N-4)(2-\lambda))\int^{\infty}_{0}r^{N-5}|v_k^{\prime}|^2
 dr)  \Bigg \} \label{oper}
\end{align}
We split \eqref{oper} as  $\|L_\lambda v\|_{L^2(\rr^N)}^2:=I_{1} + I_{2}+I_{3}$, where 
$$ I_{1} := \int^{\infty}_{0} r^{N-1}|v_k^{\prime\prime}|^{2}dr + (N-1)\int^{\infty}_{0}r^{N-3}|v_k^{\prime}|^{2}dr, $$
$$ I_{2} := \sum^{\infty}_{k=0}\left(2c_{k}\int_{0}^{\infty}r^{N-3} |v_k^{\prime}|^{2}dr + \left(c_{k}^{2}+ 2c_{k}(N-4)\right) \int_{0}^{\infty}r^{N-5}v_k^{2}\right), $$
\begin{align*}
    I_{3}: &= \lambda(\lambda-N)\sum^{\infty}_{k=0}\int^{\infty}_{0}r^{N-3}|v_k^{\prime}|^{2}dr + \lambda(4-N)\sum^{\infty}_{k=0}c_k \int^{\infty}_{0}r^{N-5}|v_k|^{2}dr. 
\end{align*} 
Let us recall the 1-d Hardy inequalities which will be very useful in our approach: 
\begin{align}\label{eq21}
    \int^{\infty}_{0} r^{N-1}|v_k^{\prime\prime}
  |^{2}dr \ge \frac{(N-2)^{2}}{4}\int^{\infty}_{0}r^{N-3}|v_k^{\prime}|^{2}dr 
\end{align}
and
\begin{align}\label{eq22}
  \int^{\infty}_{0}r^{N-3}|v_k^{\prime}|^{2}dr \ge \frac{(N-4)^{2}}{4}\int^{\infty}_{0}r^{N-5}v_k^{2} dr.    
\end{align}

Inequality \eqref{eq21} leads to  
\begin{align}\label{eq23}
    I_{1} \ge \frac{N^{2}}{4} \sum^{\infty}_{k=0}\int^{\infty}_{0}r^{N-3}|v_k^{\prime}|^{2} dr 
\end{align}

As in \cite{Cazacu2}, inequality \eqref{eq22} implies that  
\begin{align}
    I_{2} &\ge \sum^{\infty}_{k=0}c_{k}\left(\frac{(N-4)^2}{2}+2(N-4)+c_k\right)\int^{\infty}_{0}r^{N-5}v_k^{2}dr\label{equ1} \\
    & \geq \frac{N^{2} -2 N -2}{2} \sum^{\infty}_{k=0}c_{k}\int^{\infty}_{0}r^{N-5}v_k^{2}dr \label{eq24}
\end{align}

Summing up inequalities \eqref{eq23} and \eqref{eq24} to \eqref{oper} we obtain 
\begin{align}
    \|L_\lambda v\|_{L^2(\rr^N)}^2 &\ge  \left(\frac{N}{2}-\lambda\right)^2\sum^{\infty}_{k=0} \int^{\infty}_{0}r^{N-3}|v_k^{\prime}|^{2}dr \nonumber\\
    & + \left(\frac{N^{2}-2N-2}{2} + \lambda (4-N)\right) \sum^{\infty}_{k=0} c_{k}\int^{\infty}_{0}r^{N-5}v_k^{2}dr. \label{ineq1}
\end{align}    
If we restrict to $\lambda's$ satisfying 
\begin{equation}
    \frac{N^{2}-2N-2}{2} + \lambda (4-N) \geq 0 \iff 
\left\{\begin{array}{ccc}
       N^{2}-2N-2-2\lambda(N-4)\geq 0,  & N\neq4, \\[5pt]
\lambda \in \rr, & N=4 \\[5pt]
    \end{array}\right. \label{cond}
\end{equation}
from \eqref{ineq1} we obviously reach to 
\begin{align}
    \|L_\lambda v\|_{L^2(\rr^N)}^2 &\ge  \left(\frac{N}{2}-\lambda\right)^2\sum^{\infty}_{k=0} \int^{\infty}_{0}r^{N-3}|v_k^{\prime}|^{2}dr \label{ineq3}
\end{align}  
which in view of \eqref{form3} is equivalent to 
\begin{align}
    \|L_\lambda v\|_{L^2(\rr^N)}^2 &\ge  \left(\frac{N}{2}-\lambda\right)^2\int_{\rr^N} \frac{|x \cdot \nabla v|^2}{|x|^4} dx, \quad \forall (\lambda, N) \textrm{ as in } \eqref{cond}   \label{ineq3b}
\end{align}  
On the other hand,  combining \eqref{oper}, \eqref{eq23}, \eqref{equ1},  the expression of $I_3$  and \eqref{equ1} we simply obtain   
\begin{align}\label{122}
    \|L_\lambda v\|_{L^2(\rr^N)}^2  & \geq \sum_{k=0}^\infty \Bigg \{ \left(\frac{N}{2}-\lambda\right)^2 \int^{\infty}_{0}r^{N-3}|v_k^{\prime}|^{2}dr \nonumber\\ & + c_{k}\left(\frac{(N-4)^2}{2}+2(N-4)+c_k + \lambda (4-N)\right) \int^{\infty}_{0}r^{N-5}v_k^{2}dr\Bigg \}\nonumber\\
    & :=\sum_{k=0}^\infty A_k^N. 
\end{align}
\paragraph{The (trivial) case $N=4$.} From \eqref{ineq3} we obviously get  
\begin{align}
    \|L_\lambda v\|_{L^2(\rr^4)}^2 &\ge \left(2-\lambda\right)^2  \int_{\rr^4} \frac{|x \cdot \nabla v|^2}{|x|^4} dx, \quad \forall \lambda\in \rr,  \label{ineq2}
\end{align}   
which leads to the conclusion \eqref{HR_ineq1}-\eqref{sharp_const}.  
\paragraph{The case $N\neq 4$.} We will focus on the cases 
\begin{equation}\label{condition1}
    N^{2}-2N-2-2\lambda(N-4)<0
\end{equation} since the other cases have been already considered in \eqref{cond}. 
Let us denote by $R(k, N)$ the coefficient of zero order term in \eqref{ineq2} which we rewrite conveniently as   \begin{align}
R(k, N)&:=c_{k}\left(\frac{(N-4)^2}{2}+2(N-4)+c_k + \lambda (4-N)\right) \label{expr1}
\\
&=c_k \left[c_k + (N-4)\left(\frac{N}{2}-\lambda\right)\right]\label{expr2}\\
&=\left[c_k + \frac{(N-4)}{2}\left(\frac{N}{2}-\lambda\right)\right]^2 - \left(\frac{N-4}{2}\right)^2\left(\frac{N}{2}-\lambda\right)^2,\label{expression}
\end{align}
where $c_k=k(N+k-2)$. Notice that $R(0, N)=0$ for any $N\geq 3$. On the other hand in view of \eqref{expr1} and \eqref{condition1} we have that  
$$R(1, N)=(N-1)\left[\frac{N^2-2N-2}{2}+\lambda (4-N)\right]<0.$$
Recall the expression of $A_k^N$: 
\begin{equation}
   A_k^N= \left(\frac{N}{2}-\lambda\right)^2 \int^{\infty}_{0}|v_k^{\prime}|^{2}r^{N-3}dr + R(k, N)\int^{\infty}_{0}r^{N-5}v_k^{2}dr, \quad \forall k\geq 0. \label{A_k^N}
\end{equation}

In view of $R(1, N)<0$ and \eqref{expr2} we are allowed to define the positive integer  
\begin{equation}
    k_0(N, \lambda):=\max \left \{k\geq 1 \ \Big |\  c_k + (N-4)\left(\frac{N}{2}-\lambda\right)< 0\right \}. \label{k_00}
\end{equation}
The natural number $k_0(N, \lambda)$ is uniquely determined and is characterized by the algebraic system 
\begin{equation}
    \left\{ \begin{array}{ll}
    k^2 +k(N-2) + (N-4)\left(\frac{N}{2}-\lambda\right) <0      &  \\
         (k+1)^2 +(k+1)(N-2)+(N-4)\left(\frac{N}{2}-\lambda\right)   \geq 0   & 
    \end{array}\right., k\in \nn\setminus \{0\}.  \label{system2}
\end{equation}
Notice that the discriminant of both quadratic inequalities appearing in \eqref{system2} is 
$$\delta:=4\lambda(N-4)-N^2+4N +4, \quad  \textrm{with }\  \delta > N^2.$$
Each one of the associated quadratic equations in \eqref{k_00} has one positive root given by $$k_{1}^1=\frac{2-N+\sqrt{4\lambda(N-4)-N^2+4N +4}}{2}  \ \textrm{ and }\   k_{1}^2=k_1^1-1, $$ respectively. Then it is easy to see that $k_0(N, \lambda)$ is explicitly given by 
\begin{equation}
    k_0(N, \lambda)=\left\{\begin{array}{cc}
     \frac{-N+\sqrt{4\lambda(N-4)-N^2+4N +4}}{2},    &  \textrm{ if } \frac{-N+\sqrt{4\lambda(N-4)-N^2+4N +4}}{2}\in \nn \\
\left[\frac{-N+\sqrt{4\lambda(N-4)-N^2+4N +4}}{2}
\right]+1,         & \textrm{otherwise}.
    \end{array}\right. \label{k_0}
\end{equation} 
Due to \eqref{expr2} and \eqref{k_00} we have that 
\begin{equation}\label{R(k, N)}
     \left \{ \begin{array}{cc}
       R(k, N) \geq 0, & \forall\  k \geq k_0(N, \lambda)\\
              R(k, N) < 0, & \forall\  k\in \{1,\ldots,k_0(N, \lambda)\}.
    \end{array}\right.
\end{equation}
If $k=0$ then 
\begin{equation}
    A_0^N= \left(\frac{N}{2}-\lambda\right)^2 \int^{\infty}_{0}r^{N-3}|v_k^{\prime}|^{2}dr. \label{0_estimate}
\end{equation}
If $k\geq 1$ taking into account \eqref{expression}, \eqref{A_k^N} and \eqref{R(k, N)} we have 
\begin{align}
    A_k^N &\geq \left(\frac{N}{2}-\lambda\right)^2 \int^{\infty}_{0}r^{N-3}|v_k^{\prime}|^{2}dr + \min_{k=1, k_0(N, \lambda)} \{R(k, N)\} \int^{\infty}_{0}r^{N-5}v_k^{2}dr\nonumber\\
    & = \left(\frac{N}{2}-\lambda\right)^2 \int^{\infty}_{0}r^{N-3}|v_k^{\prime}|^{2}dr \nonumber\\
    &+ \left\{\min_{k=1, k_0(N, \lambda)} \left[c_k+ \frac{N-4}{2}\left(\frac{N}{2}-\lambda\right)\right]^2  - \left(\frac{N-4}{2}\right)^2\left(\frac{N}{2}-\lambda\right)^2\right\} \int^{\infty}_{0}r^{N-5}v_k^{2}dr
\end{align}
Let us consider $\eps>0$ small enough that will be chosen later.  Applying the Hardy inequality \eqref{eq22} we get 
\begin{align}
   A_k^N \geq   &  \left[\left(\frac{N}{2}-\lambda\right)^2-\eps\right] \int^{\infty}_{0}r^{N-3}|v_k^{\prime}|^{2}dr \nonumber\\
   & + \left\{\left(\frac{N-4}{2}\right)^2\left[\eps-\left(
    \frac{N}{2}-\lambda\right)^2\right]+\min_{k=1, k_0(N, \lambda)} \left[c_k +\frac{N-4}{2}\left(\frac{N}{2}-\lambda\right)\right]^2  \right\} \int^{\infty}_{0}r^{N-5}v_k^{2}dr. 
\end{align}
Now choosing $\eps$ such that  
\begin{multline}
    \left(\frac{N-4}{2}\right)^2\left[\eps-\left(
    \frac{N}{2}-\lambda\right)^2\right]+\min_{k=1, k_0(N, \lambda)} \left[c_k +\frac{N-4}{2}\left(\frac{N}{2}-\lambda\right)\right]^2=0 \\
    \iff \eps:=-\left(\frac{2}{N-4}\right)^2\min_{k=1, k_0(N, \lambda)} \left[c_k +\frac{N-4}{2}\left(\frac{N}{2}-\lambda\right)\right]^2+\left(
    \frac{N}{2}-\lambda\right)^2\\
    =-\left(\frac{2}{N-4}\right)^2\min_{k=1, k_0(N, \lambda)} R(k, N) >0
\end{multline} 
we finally get 
\begin{equation}
    A_k^N \geq \left(\frac{2}{N-4}\right)^2\min_{k=1, k_0(N, \lambda)} \left[c_k +\frac{N-4}{2}\left(\frac{N}{2}-\lambda\right)\right]^2 \int^{\infty}_{0}r^{N-3}|v_k^{\prime}|^{2}dr. \label{k_estimates1}
\end{equation}
Formulas \eqref{122}, \eqref{0_estimate}, \eqref{k_estimates1} lead to the conclusion \eqref{sharp_const} of the theorem.

Using  \eqref{prop 1.1} we obtain the inequality \eqref{HR_ineq1}  for any $\lambda \in \rr\setminus\left\{\lambda_n:=\frac{(2n+N-2)^2-4} {2(N-4)} \  | n \in \nn^\star \right\} \setminus \{\frac{N}{2}\}$ and any $N\neq 4$. 
\end{proof}
    \begin{proof}[Proof of Proposition \eqref{prop 1.1}] Let $n$ such that $n^2 +n(N-2)+ \frac{N-4}{2}\left(\frac{N}{2}-\lambda\right)=0$ . Since $n\in \nn $ and $\lambda > \frac{N^2-2N-2}{2(N-4)}$ we get $n=\frac{-(N-2)+\sqrt{4+2(N-4)\lambda}}{2}$ . The condition $ n \ge 1$ is true when $ \lambda \geq  \frac{N^2-4}{2(N-4)}$. Therefore, for $ \lambda < \frac{N^2-4}{2(N-4)}$  there is no $ n$ such that the equation $ n^2 +n(N-2)+ \frac{N-4}{2}\left(\frac{N}{2}-\lambda\right)=0$ holds. It remains to analyze the situation when $ \lambda >\frac{N^2-4}{2(N-4)}$. 
    
    Then we obtain  $\lambda_n:=\frac{(2n+N-2)^2-4}{2(N-4)}$. 

    {\bf The case $k_0=k_0(N, \lambda_n)=\frac{-N+\sqrt{4\lambda_n(N-4)-N^2+4N +4}}{2}$.} Then we get the system 
    \begin{equation}
    \left\{\begin{array}{cc}
       2(2n+N-2)^2 -(N-2)^2= (2k_0 +N)^2,   &  \\
       1\leq n\leq k_0,  & \textrm{ with } k_0, n\in \nn. 
    \end{array}\right. \label{syst1}   
    \end{equation}

 We want to give a complete algebraic description of the pairs $ (n,k_{0})$. Hence we can rewrite the equation such that 
\begin{align*}
    (2k_{0} +N )^{2} + (N-2)^{2} = 2 (2n+N-2)^{2}
\end{align*}
 We denote $ u:= 2k_{0}+N$, $ v:= N-2$ and $ a := 2n+N-2$ such that the previous equation becomes $$ u^{2} + v^{2} = 2 a^{2}.$$
 We can see that $$ (u + v)^{2} + (u-v)^{2} = 2(u^{2} + v^{2}) =4 a^{2}.$$
 Making the change $ p = \frac{u+v}{2}$ and $ q = \frac{u-v}{2}$ we reduce the above equation to a pithagorean equation 
 \begin{align}
     p^{2} + q^{2} = a^{2} 
 \end{align}
 
A known pithagorean triplet is $(p,q,a) =(4m,3m,5m)$, where $ m \in \mathbb{N}$.

We put $ m = v = N-2$ and we have $$ p = 4v, \quad q=3v,\quad a =5v. $$

By definition we have 
$$ u = p+ q = 7v = 2 k_{0} + N \quad \mathrm{and} \quad a = 5v= 2n+N-2.$$ 
Therefore we get $$ k_{0} = \frac{7v-N}{2}= \frac{7(N-2)-N}{2} = 3N-7 \quad \mathrm{and}  \quad n = \frac{a-N +2}{2} =\frac{5(N-2)-N+2}{2} = 2N-4$$

We observe that for $ N \ge 3$ we have $$ n =2N-4 \leq k_{0}= 3N-7$$
Hence the pair $(2N-4, 3N-7)$ is a solution to \eqref{syst1}

We want to prove that the system \eqref{syst1} has a countable infinity of solutions. 
We denote $ x:= 2k_{0}+ N$  and $ y = 2n +N-2$ and after these changes the equation $$ 2(2n+N-2)^{2} -(N-2)^{2}=(2k_{0} +N)^{2}$$ becomes
\begin{align}\label{103}
     x^{2} -2y^{2} = -(N-2)^{2},
\end{align}
which is known as Pell equation. 

Since we have a particular solution $ (n,k_{0})=(2N-4, 3N-7)$ to \eqref{syst1} we get $$ (x_{0}, y_{0}) = (7(N-2), 5(N-2)),$$ a particular solution to \eqref{103}. 

Therefore we define 
\begin{align}\label{106}
    x_{m}+ y_{m}\sqrt{2}= (x_{0}+y_{0}\sqrt{2}) (3+ 2\sqrt{2})^{m}.
\end{align}
It is easily to check that  $$ x_{m}^{2} - 2y_{m}^{2} = (x_{0}^{2}-2y_{0}^{2}) \cdot 1 = -(N-2)^{2}.$$

By  \eqref{106} we deduce that $$ x_{m+1} + y_{m+1}\sqrt{2} = (x_{m}+ y_{m}\sqrt{2})(3+2\sqrt{2}).$$

Therefore we get the second-order linear recurrence relations
 \begin{equation}
    \left\{\begin{array}{cc}
       x_{m+1}=3x_{m}+4y_{m},   &  \\
       y_{m+1}=2x_{m}+3y_{m} & . 
    \end{array}\right. \label{107}   
    \end{equation}
The system \eqref{107} yields that $ x_{m+1} \equiv x_{m} \pmod{2}$ and $ y_{m+1} \equiv y_{m} \pmod{2}$. On the other hand we observe that $ x_{0} \equiv N \pmod{2}$ and $ y_{0} \equiv N \pmod{2}$ and putting all together we get by induction that $$ x_{m} \equiv N \pmod{2}\quad  \mathrm{and}\quad y_{m} \equiv N \pmod{2}.$$

Hence we obtain the integer numbers $$ k_{0_{m}} = \frac{x_{m}-N}{2}\in \mathbb{Z} \quad \mathrm{and} \quad n_{m} =\frac{y_{m}-N+2}{2}\in \mathbb{Z} $$

By conjugation the equality \eqref{106} we obtain 
$$ x_{m}= \frac{N-2}{2}((3+ 2\sqrt{2})^{m} + (3-2\sqrt{2})^{m}) \quad \mathrm{and} \quad y_{m}= \frac{N-2}{2\sqrt{2}}((3+ 2\sqrt{2})^{m} - (3-2\sqrt{2})^{m}) $$

For $ N \geq 3$ we note that $ x_{0}\geq y_{0}$ and keeping into account that $ x_{m}, y_{m}>0$ and the system \eqref{107} holds then we obtain  $$ x_{m+1}-y_{m+1} =x_{m}+y_{m}>0.$$

By induction we get $ x_{m}\geq y_{m}$ and therefore we have $ k_{0_{m}}\geq n_{m}$.  

Since $ y_{m+1}= 2x_{m}+ 3y_{m}  \geq 5 y_{m} \geq y_{m}$ we deduce that $ y_{m}\geq y_{0}\geq N$ and therefore we have $$ n_{m}= \frac{y_{m}-N+2}{2} \geq 1.$$

{\bf The case $k_0=k_{0}(N, \lambda_{n})=\left[\frac{2-N+\sqrt{4\lambda_{n}(N-4)-N^2+4N +4}}{2}\right] = \left[\frac{2-N +\sqrt{2(2n+N-2)^{2}-(N-2)^{2}}}{2}\right]$. 
} It is sufficient to prove that $ n \leq k_{0}$.

By a direct computation we deduce that $$ n \leq\frac{2-N +\sqrt{2(2n+N-2)^{2}-(N-2)^{2}}}{2} $$ since $ (N-2)^{2}\leq (2n+N-2)^{2}$ for $ N \geq 3$. 

Therefore $ n \leq \left[\frac{2-N +\sqrt{2(2n+N-2)^{2}-(N-2)^{2}}}{2}\right]=k_{0}$. 

Conversely, in the both cases if $\min_{k=1, k_0(N, \lambda)} \left[c_k +\frac{N-4}{2}\left(\frac{N}{2}-\lambda \right)\right]^{2}=0$ we get that there exists $ n \leq k_{0}(N,\lambda)$ such that $\lambda_{n}= \frac{(2n+N-2)^2-4}{2(N-4)}$.
\end{proof}


\begin{proof}[Proof of Theorem \eqref{Ineq_lambda}]
We first have 
\begin{align}
    \int_{\rr^N}  \left | -
\Delta v + \lambda\frac{ v}{|x|^2} 
\right|^2  dx
     =\int_{\mathbb{R}^{N}}|\Delta v |^{2}dx + \lambda^{2}\int_{\mathbb{R}^{N}}\frac{ v^{2} }{|x|^{4}}dx + 2 \lambda \int_{\mathbb{R}^{N}}- \Delta{v }\frac{ v }{|x|^{2}} dx.  \label{inequality}
\end{align}

Next we compute separately the mixt term $ I := \int_{\mathbb{R}^{n}}-\Delta v \frac{ v }{|x|^{2}}dx $ in \eqref{mixt}. Using again Einstein's summation convection we obtain 
\begin{align}\label{calcul1}
   I = &\int_{\mathbb{R}^{N}}-\Delta v \frac{v}{|x|^{2}}dx = - \int_{\mathbb{R}^{N}}v_{x_{i}x_{i}}\frac{v}{|x|^{2}} = \int_{\mathbb{R}^{N}}v_{x_{i}}\frac{v_{x_{i}}|x|^{2}-2vx_{i}}{|x|^{4}}  \nonumber \\
   & = \int_{\mathbb{R}^{N}} v_{x_{i}}\left(\frac{v_{x_{i}}}{|x|^{2}} - \frac{2vx_{i}}{|x|^{4}}\right) = \int_{\mathbb{R}^{N}}\frac{|\nabla v|^{2}}{|x|^{2}} - \int_{\mathbb{R}^{N}}\nabla (v^{2})\frac{x_{i}}{|x|^{4}}  \nonumber \\
   & = \int_{\mathbb{R}^{N}} \frac{|\nabla v|^{2}}{|x|^{2}} + \int_{\mathbb{R}^{N}} v^{2}\frac{\partial}{\partial x_{i}}\left(\frac{x_{i}}{|x|^{4}}\right) dx  \nonumber \\
  &=  \int_{\mathbb{R}^{N}}\frac{|\nabla v|^{2}}{|x|^{2}}dx + N \int_{\mathbb{R}^{N}}v^{2}\frac{|x|^{4}}{|x|^{8}} - \int_{\mathbb{R}^{N}}4\frac{v^{2}|x|^{2}x_{i}^{2}}{|x|^{8}} dx \nonumber \\
  &= \int_{\mathbb{R}^{N}} \frac{|\nabla v|^{2}}{|x|^{2}} + N \int_{\mathbb{R}^{N}}\frac{v^{2}}{|x|^{4}} - 4 \int_{\mathbb{R}^{N}}\frac{v^{2}}{|x|^{4}}\nonumber \\
  &= \int_{\mathbb{R}^{N}}\frac{|\nabla v|^{2}}{|x|^{2}}dx + (N-4)\int_{\mathbb{R}^{N}}\frac{v^{2}}{|x|^{4}}dx.  
\end{align}

Coupling \eqref{inequality} and \eqref{calcul1} we summarize  
\begin{align*} 
     \int_{\mathbb{R}^{N}} \left|-\Delta v + \lambda \frac{v}{|x|^{2}}\right|^{2}dx = \int_{\mathbb{R}^{N}}|\Delta v|^{2}dx + (\lambda ^{2} + 2\lambda (N-4)) \int_{\mathbb{R}^{N}}\frac{v^{2}}{|x|^{4}}dx + 2 \lambda \int_{\mathbb{R}^{N}}\frac{|\nabla v|^{2}}{|x|^{2}}. 
\end{align*}
It is obvious that for $ \lambda > 0$ we have that $$ \int_{\mathbb{R}^{N}}\left|-\Delta v + \lambda \frac{v}{|x|^{2}}\right|^2 dx  \ge (\lambda ^{2} + 2\lambda (N-4))\int_{\mathbb{R}^{N}} \frac{v^{2}}{|x|^{4}}dx .$$

We apply again the spherical harmonics decomposition. Identities \eqref{eq20}, \eqref{form2}, \eqref{form3} 
and 
\begin{equation}
\int_{\mathbb{R}^{N}}\frac{v^{2}}{|x|^{4}}= \sum_{k=0}^{\infty}\int_{0}^{\infty}r^{N-5}v_{k}^{2}dr  \label{form11}
\end{equation}
 lead to 
\begin{align}\label{form14}
    \int_{\mathbb{R}^{N}}\left|-\Delta v + \lambda\frac{v}{|x|^{2}}\right|^{2} &=  \sum_{k=0}^{\infty} \Bigg \{ \int^{\infty}_{0} r^{N-1}|v_k^{\prime\prime}|^{2}dr +(N-1+2\lambda + 2c_{k})\int_{0}^{\infty}r^{N-3}|v_{k}^{\prime}|^{2} \nonumber\\
    &+(\lambda^{2} + 2\lambda(N-4))\int^{\infty}_{0}r^{N-5}|v_k|^{2}dr\nonumber\\
    & + c_k(c_k +2(N-4)+ 2\lambda)\int^{\infty}_{0}r^{N-5}|v_k|^2
 dr)  \Bigg \} 
\end{align}
We split $ \int_{\mathbb{R}^{N}} \left|-\Delta v +\lambda \frac{v}{|x|^{2}}\right|^{2} dx = I_{1} + I_{2} + I_{4}$, where $I_1$ and $I_2$ are as in the proof of Thm. \ref{Teorema 1}, namely 
$$ I_{1} = \sum_{k=0}^{\infty}\left(\int^{\infty}_{0} r^{N-1}|v_k^{\prime\prime}|^{2}dr + (N-1)\int^{\infty}_{0}r^{N-3}|v_k^{\prime}|^{2}dr\right), $$
$$ I_{2} = \sum^{\infty}_{k=0}\left(2c_{k}\int_{0}^{\infty}r^{N-3} |v_k^{\prime}|^{2}dr + \left(c_{k}^{2}+ 2c_{k}(N-4)\right) \int_{0}^{\infty}r^{N-5}v_k^{2}\right), $$
whereas 
\begin{align*}
    I_{4}: &=  \sum^{\infty}_{k=0}\left[2\lambda\int^{\infty}_{0}r^{N-3}|v_k^{\prime}|^{2}dr +(\lambda^{2} + 2 \lambda (N-4) + 2c_{k}\lambda)\int^{\infty}_{0}r^{N-5}|v_k|^{2}dr\right]. 
\end{align*} 
In view of the 1-d Hardy inequalities  \eqref{eq21}-\eqref{eq22} combined with the lower bound for $I_1$ and $I_2$ obtained in \eqref{eq23}-\eqref{eq24}  we get that 

\begin{align}
    \int_{\mathbb{R}^{N}}|-\Delta v+ \lambda \frac{v}{|x|^{2}}|^{2}dx &\ge \left(\frac{N^{2}}{4} + 2\lambda\right)\sum_{k=0}^{\infty}\int_{0}^{\infty}r^{N-3}|v_{k}^{\prime}|^{2}   \\ \nonumber 
    & + \left(\frac{N^{2} -2N-2}{2} +  2\lambda\right)\sum_{k=0}^{\infty}c_{k}\int_{0}^{\infty}r^{N-5}v_{k}^{2}dr \\ \nonumber
    &+ (\lambda^{2} + 2 \lambda(N-4))\sum_{k=0}^{\infty}\int_{0}^{\infty}r^{N-5}|v_{k}|^{2}dr \\ \nonumber 
    &\geq \left[\left(\frac{N^{2}}{4}+2\lambda\right)\left(\frac{N-4}{2}\right)^{2} + \lambda^{2} + 2\lambda(N-4)\right]\sum_{k=0}^{\infty}\int_{0}^{\infty}r^{N-5}|v_{k}|^{2}dr + \\ \nonumber 
    & + \left(\frac{N^{2}-2N-2}{2} + 2\lambda\right) \sum_{k=0}^{\infty}c_{k}\int_{0}^{\infty}r^{N-5}v_{k}^{2}dr 
\end{align}

A simple computation shows that for dimension $ N \geq 1$ and $ \lambda \in \mathbb{R}\setminus\{-\frac{N(N-4)}{4}\}$ we have $$ \left(\frac{N^{2}}{4}+2\lambda\right)\left(\frac{N-4}{2}\right)^{2} + \lambda^{2} + 2\lambda(N-4) =\left(\lambda + \frac{N(N-4)}{4}\right)^{2}>0 $$

For $ \lambda > \frac{-N^{2} +2N +2}{4} $ we get $ \frac{N^{2} -2N-2}{2} +2\lambda > 0 $. 

Hence for dimension $ N \geq 5$ and $ \lambda \in (\frac{-N^{2} + 2N +2}{4}, \infty) \setminus\{-\frac{N(N-4)}{4}\}$
we obtain that
$$ \int_{\mathbb{R}^{N}}|-\Delta v +\lambda \frac{v}{|x|^{2}}|^{2}dx \geq \left[\left(\frac{N^{2}}{4} +2\lambda\right)\left(\frac{N-4}{2}\right)^{2} + \lambda^{2} + 2 \lambda(N-4)\right]\int_{\mathbb{R}^{N}}\frac{v^{2}}{|x|^{4}}dx.$$
\end{proof}

\section{Examples and counterexamples}\label{section3}

Let us consider problem \eqref{p1}  in the unit ball

\begin{subequations}\label{eq109}
\begin{empheq}[left=\empheqlbrace]{align}
-\Delta v +\lambda\frac{x\cdot\nabla v}{|x|^{2}}
&=|x|^{\alpha},
\text{in }B_{1}(0),\label{eq109a}\\
v&=0,
\text{on }\partial B_{1}(0),\label{eq109b}
\end{empheq}
\end{subequations}

with the source $ f(x)=|x|^{\alpha} \in L^{2}(B(0,1))$ when $\alpha > -\frac{N}{2}$.

The following proposition proves the assertion (b) from theorem \eqref{Teorema 1}. 

\begin{prop}\label{ Teorema 4}
    Let $ \alpha \in (-\frac{N}{2},\infty) \setminus \{-2,\lambda-N\}$.  Radial solutions of  problem \eqref{eq109a} are of the form 
    \begin{align}
        v(x) = - \frac{|x|^{\alpha+2}}{(\alpha+2)(\alpha+N-\lambda)} 
+ C_1 \, |x|^{-N+2+\lambda} + C_2, \quad C_{1}, C_{2} \in \mathbb{R}.
    \end{align}
    Moreover, if $ \lambda \in (\frac{N-2}{2},\frac{N}{2}]$ thus $ v \in H_{0}^{1}(B_{1}(0)) \setminus H^{2}(B_{1}(0))$. 
\end{prop}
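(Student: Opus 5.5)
The plan is to reduce the radial equation to an ODE and integrate it explicitly, then read off Sobolev membership from power-type behaviour at the origin. For a radial function $v(x)=w(r)$, $r=|x|$, we have $\Delta v = w''+\frac{N-1}{r}w'$ and $\frac{x\cdot\nabla v}{|x|^2}=\frac{w'}{r}$. So \eqref{eq109a} becomes
\begin{equation*}
w''+\frac{N-1-\lambda}{r}w'=-r^{\alpha}, \quad\text{equivalently}\quad r^{-(N-1-\lambda)}\bigl(r^{N-1-\lambda}w'\bigr)'=-r^{\alpha},
\end{equation*}
which is exactly the one-dimensional form of the divergence structure $-|x|^{\lambda}\mathrm{div}(|x|^{-\lambda}\nabla v)$ from Step~4 of the outline.

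A first integration gives $r^{N-1-\lambda}w'=-\frac{r^{\alpha+N-\lambda}}{\alpha+N-\lambda}+C$, which uses $\alpha\neq\lambda-N$. Hence $w'=-\frac{r^{\alpha+1}}{\alpha+N-\lambda}+Cr^{\lambda+1-N}$. A second integration, using $\alpha\neq -2$ and $\lambda\neq N-2$, gives the stated formula with $C_1=C/(\lambda+2-N)$. Since every classical radial solution on $B_1\setminus\{0\}$ arises this way, this proves the first claim. The Dirichlet condition \eqref{eq109b} only fixes $C_2=\frac{1}{(\alpha+2)(\alpha+N-\lambda)}-C_1$, so $C_1$ remains free.

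For the second claim, fix $\lambda\in(\frac{N-2}{2},\frac N2]$, take any $C_1\neq0$, and treat the two pieces of $v$ separately.

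\textbf{The regular part.} Since $\alpha>-\frac N2$, the term $r^{\alpha+2}$ lies in $H^2(B_1)$: its second derivatives are of size $r^{\alpha}$, and $r^\alpha\in L^2$.

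\textbf{The singular part $r^{\lambda+2-N}$.} Its gradient has size $r^{\lambda+1-N}$. This is in $L^2(B_1)$ iff $2(\lambda+1-N)+N-1>-1$, i.e.\ iff $\lambda>\frac{N-2}{2}$. Likewise $r^{\lambda+2-N}\in L^2$ in this range, so $v\in H^1(B_1)$ with zero trace, i.e.\ $v\in H_0^1(B_1)$. Its Hessian has size $r^{\lambda-N}$, and the angular components are nonvanishing multiples of $r^{\lambda-N}$ because $\lambda\neq N-2$. This is in $L^2$ iff $2(\lambda-N)+N>0$, i.e.\ iff $\lambda>\frac N2$. Thus for $\lambda\le\frac N2$ we get $v\notin H^2(B_1)$. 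Note that Lax--Milgram uniqueness is lost precisely when $\lambda\ge\frac{N-2}{2}$, which is why these extra singular solutions can belong to $H_0^1$.

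\textbf{Weak formulation.} It remains to check that $v$ is a weak solution in the sense of Definition~\ref{def1}, not merely a pointwise solution away from $0$; I expect this to be the main technical point. I would first take $\phi\in C_c^\infty(B_1)$ and integrate by parts on $B_1\setminus B_\rho$. The drift term is harmless: $\frac{x\cdot\nabla v}{|x|^2}\phi=O(r^{\lambda-N})$ is integrable because $\lambda>0$. The boundary term on $\partial B_\rho$ is of order $\rho^{N-1}|w'(\rho)|\,\|\phi\|_\infty\lesssim \rho^{\lambda}+\rho^{\alpha+N}$, which tends to $0$ as $\rho\to0$. The identity then extends to all $\phi\in H_0^1(B_1)$ by density, since both sides are continuous in $H^1$. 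For the left-hand side this uses Hardy's inequality \eqref{HI} together with $\frac{x\cdot\nabla v}{|x|}\in L^2$; for the right-hand side it uses $f\in L^2$.
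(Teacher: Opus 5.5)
Your route is the paper's: reduce to the radial ODE, integrate twice, and count powers at the origin. For $\lambda\neq N-2$ your argument is correct and more careful than the paper's in two places. Your constant $C_2=\frac{1}{(\alpha+2)(\alpha+N-\lambda)}-C_1$ has the right sign, whereas the paper's $C_1+C_2=-\frac{1}{(\alpha+2)(\alpha+N-\lambda)}$ does not. Your last paragraph also checks that $v$ satisfies Definition~\ref{def1}. The paper omits this step, yet it is exactly what is needed to use this example for Theorem~\ref{Teorema 1}(b). Your check is sound:
the sphere term is $O(\rho^{\lambda}+\rho^{\alpha+N})$;
the drift term is $O(r^{\lambda-N}+r^{\alpha})$, which is integrable because $\lambda>0$;
the extension to $\phi\in H_0^1(B_1(0))$ follows from $|x\cdot\nabla v|/|x|\le|\nabla v|$ together with \eqref{HI}.

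The gap is the value $\lambda=N-2$. You rightly exclude it when integrating; there the statement's formula is incomplete, because the second homogeneous solution is $\log|x|$, not a constant. In the second part, however, you fix an arbitrary $\lambda\in(\frac{N-2}{2},\frac N2]$ and use $\lambda\neq N-2$ as if it were automatic. It is not: $N-2$ lies in this interval for $N=3$ ($\lambda=1$) and $N=4$ ($\lambda=2$), and the paper explicitly claims both cases in the remark after the proposition. At that value the stated formula only gives $v=-\frac{|x|^{\alpha+2}}{(\alpha+2)^2}+C_1+C_2\in H^2(B_1(0))$, so your argument produces no counterexample there.

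The repair is short. The radial solutions are $-\frac{|x|^{\alpha+2}}{(\alpha+2)^2}+C_1\log|x|+C_2$, and the boundary condition gives $C_2=\frac{1}{(\alpha+2)^2}$. For $C_1\neq0$:
$|\nabla(\log|x|)|=r^{-1}\in L^2(B_1(0))$ since $N\ge 3$;
$|D^2(\log|x|)|^2=Nr^{-4}$ is not integrable on $B_1(0)$ since $N\le 4$;
your weak-formulation argument goes through unchanged, with sphere term $O(\rho^{N-2}+\rho^{\alpha+N})$ and drift term $O(r^{-2}+r^{\alpha})$.

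The paper's proof has the same blind spot, since it integrates $C_1r^{-N+1+\lambda}$ to $C_1|x|^{-N+2+\lambda}$ without comment.
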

\begin{proof}[Proof of Proposition \eqref{ Teorema 4}]

We look for a radial solution of the form $v(x) = g(|x|) = g(r)$.

Thus, the equation \eqref{eq109a} becomes:

$$- g''(r) - \frac{N-1}{r} g'(r) + \lambda \frac{g'(r)}{r} r = r^\alpha.$$
We set $g'(r) = w(r)$ and we get 
\begin{align}\label{120}
    - w'(r) - \frac{w(r)}{r}(N-1-\lambda) = r^\alpha.
\end{align}

Using the method of variation of constants we look for solutions $ w$ of the form 
$$\overline{w}(r) = C (r)\cdot r^{-(N-1-\lambda)}.$$

By substitution in \eqref{120} we get using the integration that there exists a constant $ C_{1} \in \mathbb{R}$ such that

$$ C(r)=-\frac{r^{\alpha+N-\lambda}}{\alpha + N-\lambda}+C_{1} \quad \mathrm{and}\quad w(r)= -\frac{r^{\alpha+1}}{\alpha + N-\lambda} +C_{1}r^{-N+1+\lambda}$$
 
Integrating we obtained that 
$$ v(x) = - \frac{|x|^{\alpha+2}}{(\alpha+2)(\alpha+N-\lambda)} 
+ C_1 \, |x|^{-N+2+\lambda} + C_2, $$ 
where $ \alpha \notin \{-2, \lambda -N\}$ and $ C_{2} \in \mathbb{R}$.  

 If $ C_{1} + C_{2} = -\frac{1}{(\alpha+2)(\alpha +N -\lambda)}$ we have the boundary condition \eqref{eq109b} satisfied. 
 
We want to find the parameters $ \lambda$ such that $v \in H_{0}^1(B(0,1))$.

This is equivalent to
$$ \int_{B_{1}(0)}(|v(x)|^{2} + |\nabla v(x)|^{2})dx< \infty.$$

For $ \alpha > -\frac{N}{2}$ we get that 
\begin{align*}
    \left\{\begin{array}{ll} 
 \int_{B_{1}(0)} ||x|^{\alpha+2}|^{2}dx = \int_{0}^{1} r^{2(\alpha+2) +N-1}dr < \infty &		\\
 \int_{B_{1}(0)} |\nabla |x|^{\alpha+2}|^{2} dx =\int_{0}^{1} r^{2(\alpha+1)+ N-1}dr < \infty & \\ 
 \int_{B_{1}(0)}|D^{2}|x|^{\alpha+2}|^{2}dx = \int_{0}^{1}r^{2\alpha + N-1}dr < \infty,
	\end{array}\right.
\end{align*}

which means that $ |x|^{\alpha + 2} \in H^{2}(B_{1}(0))$. 

On the other hand, for $ \lambda > \frac{N-2}{2}$ we observe that 
\begin{align*}
    \left\{\begin{array}{ll} 
 \int_{B_{1}(0)} ||x|^{-N+2+\lambda}|^{2}dx = \int_{0}^{1} r^{2(-N+2+\lambda) +N-1}dr < \infty &		\\
 \int_{B_{1}(0)} |\nabla |x|^{-N+2+\lambda}|^{2}dx = \int_{0}^{1}r^{2(-N+1+\lambda)+N-1}dr <\infty,\\ 
	\end{array}\right.
\end{align*}
which implies that $ |x|^{-N + 2+\lambda} \in H^{1}(B(0,1))$.

Putting all together for $ \alpha \in (-\frac{N}{2},\infty) \setminus \{-2,\lambda-N\} ,$ $ \lambda > \frac{N-2}{2}$ and 
$ C_{1} + C_{2} = -\frac{1}{(\alpha+2)(\alpha +N -\lambda)}$
we deduce that $ v \in H_{0}^{1}(B_{1}(0))$. 

For $ \lambda >\frac{N}{2}$ we obtain that $$\int_{B_{1}(0)}|D^{2}|x|^{-N+ 2 +\lambda }|^{2}dx= \int_{0}^{1}r^{2(-N+ \lambda)+ N-1}dr < \infty$$
which gives us that 
$|x|^{-N+ 2 +\lambda } \in H^{2}(B_{1}(0))$. 

Hence we showed that $v \in H_{0}^{1}(B_{1}(0))\setminus H^{2}(B_{1}(0))$ for $ \lambda \in (\frac{N-2}{2},\frac{N}{2}]$.  
\end{proof}

\begin{obs}

   For $ N=3$ and $ \lambda \in (\frac{1}{2},\frac{3}{2}]$ we get an example of solution $ v \in H_{0}^{1}(B(0,1))\setminus H^{2}(B(0,1))$ to problem  \eqref{ Teorema 4}, where the source $ |x|^{\alpha} \in L^{2}(B(0,1))$. 

   A similar counterexample to the Calderon-Zygmund theory we obtained for $ \lambda \in (1,2]$ and $ N=4$.
\end{obs}

Now we focus on the following homogeneous singular elliptic equation 

\begin{align}\label{eq113}
-\Delta v+ \lambda \frac{x\nabla v}{|x|^{2}}=0,
\end{align}
where $ \lambda < \frac{N-2}{2}$. 

\begin{prop}\label{ Teorema 5}
    Let $ \beta \in \left\{\frac{N-\lambda + \sqrt{(N-\lambda)^{2}+4\lambda}}{2},\frac{N-\lambda -\sqrt{(N-\lambda)^{2}+4\lambda}}{2}\right\}$.
    Then $ v(x) = \frac{x_{N}}{|x|^{\beta}}$ is a solution to \eqref{eq113}.
    If $ N \geq 5$ and $ \lambda \in \left[\frac{N^{2}}{2(N-2)},\frac{N^{2}-4}{2(N-4)}\right]$ we have that $ v \in H^{1}(B_{1}(0))\setminus H^{2}(B_{1}(0))$.
    
    In addition, if $ N=4$ and $ \lambda \in[4,\infty)$ or $ N = 3$ and $ \lambda \in \left(-\infty,-\frac{5}{2}\right] \cup \left[\frac{9}{2}, \infty\right)$ we have that $ v\in H^{1}(B_{1}(0))\setminus H^{2}(B_{1}(0))$. 
\end{prop}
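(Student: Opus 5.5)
The plan is to verify the equation by a direct computation, reduce membership in $H^1(B_1(0))$ and in $H^2(B_1(0))$ to conditions on the exponent $\beta$ alone, and then translate those conditions into the stated $\lambda$-ranges through the relation between $\lambda$ and $\beta$.

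\textbf{Step 1: the equation.} Write $v=x_N g(r)$ with $g(r)=r^{-\beta}$. The standard formulas
$$\Delta(x_N g)=x_N\Big(g''+\tfrac{N+1}{r}g'\Big),\qquad x\cdot\nabla(x_N g)=x_N\,(g+rg')$$
give, for $x\neq 0$,
$$\Delta v=\beta(\beta-N)\,x_N r^{-\beta-2},\qquad \frac{x\cdot\nabla v}{|x|^2}=(1-\beta)\,x_N r^{-\beta-2}.$$
Hence \eqref{eq113} holds pointwise on $B_1(0)\setminus\{0\}$ if and only if
$$\beta^2-(N-\lambda)\beta-\lambda=0.$$
The two values of $\beta$ in the statement are exactly the roots of this quadratic, assuming the discriminant $(N-\lambda)^2+4\lambda$ is nonnegative; this will hold on the ranges we end up with. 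Equivalently, for $\beta\neq 1$,
$$\lambda=\Lambda(\beta):=\frac{\beta(\beta-N)}{1-\beta}.$$

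\textbf{Step 2: reduction to exponents.} Since $|x_N|\le r$ and the angular factors $x_N/r$ are bounded and not identically zero, we have
$$|v|\sim r^{1-\beta},\qquad |\nabla v|\sim r^{-\beta},\qquad |D^2v|\sim r^{-\beta-1}$$
in the $L^2(S^{N-1})$ sense on each sphere. Integrating in polar coordinates:
\begin{itemize}
\item $v\in H^1(B_1(0))$ if and only if $-2\beta+N>0$, that is $\beta<N/2$;
\item $D^2v\notin L^2(B_1(0))$ if and only if $-2\beta-2+N\le 0$, that is $\beta\ge N/2-1$.
\end{itemize}
Two technical points need justification here. First, the pointwise derivatives must coincide with the distributional ones on all of $B_1(0)$. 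For this I would use a cutoff $\eta_\delta$ vanishing on $B_\delta$; the boundary terms are of order $\delta^{N-1}\cdot\delta^{1-\beta}\to 0$ because $\beta<N/2<N$, so $v\in W^{1,1}$ and the origin is removable for first derivatives. Second, if $D^2v$ were in $L^2$, its distributional second derivatives would have to equal the pointwise ones away from $0$, and those are not square integrable. Since $v$ is smooth up to $\partial B_1(0)$, the boundary causes no issue, and no boundary condition is claimed.

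So the task reduces to showing that one of the two roots $\beta$ lies in the window
$$\Big[\tfrac N2-1,\ \tfrac N2\Big).$$

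\textbf{Step 3: the $\lambda$-ranges.} I would study $\Lambda$ on this window. Its endpoint values are
$$\Lambda\big(\tfrac N2\big)=\frac{N^2}{2(N-2)},\qquad \Lambda\big(\tfrac N2-1\big)=\frac{N^2-4}{2(N-4)}.$$
\begin{itemize}
\item For $N\ge5$ the pole $\beta=1$ lies outside the window, so $\Lambda$ is continuous there. One checks that it is monotone (decreasing) on the window, by computing $\Lambda'$ or by the intermediate value theorem together with injectivity. Its image is therefore exactly $\big[\frac{N^2}{2(N-2)},\frac{N^2-4}{2(N-4)}\big]$, which gives the claimed interval. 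For each such $\lambda$, the $\beta$ in the window is a real root, so the discriminant condition holds automatically.
\item For $N=4$ the window is $[1,2)$ and contains the pole at $\beta=1$. On $(1,2)$ we have $\Lambda(\beta)\to+\infty$ as $\beta\to1^+$ and $\Lambda(2)=4$, so the image is $[4,\infty)$ (the value at $\beta=1$ is excluded since there is no solution).
\item For $N=3$ the window is $[\tfrac12,\tfrac32)$, split by the pole. On $[\tfrac12,1)$ the image is $(-\infty,-\tfrac52]$, since $\Lambda(\tfrac12)=-\tfrac52$ and $\Lambda\to-\infty$ as $\beta\to1^-$. On $(1,\tfrac32)$ the image is $[\tfrac92,\infty)$, since $\Lambda\to+\infty$ as $\beta\to1^+$ and $\Lambda(\tfrac32)=\tfrac92$.
\end{itemize}

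\textbf{Expected obstacles.} The main one is the careful monotonicity and sign analysis of $\Lambda$ near the pole, including whether the relevant endpoint is attained, so that the images are exactly the stated closed or half-open intervals. A secondary point is the rigorous justification, done via cutoffs in Step 2, that the singular function $v$ is a genuine weak ($H^1$) solution across the origin.
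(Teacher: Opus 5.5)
Your approach is essentially the paper's: the same formulas for $\Delta v$ and $x\cdot\nabla v$, and the same exponent thresholds ($\beta<\tfrac N2$ for $H^1$, $\beta\ge\tfrac N2-1$ for $D^2v\notin L^2$). Inverting to $\lambda=\Lambda(\beta)$ is cleaner than the paper's separate squaring for $\beta_\pm$, and the monotonicity you need does hold, since $\Lambda'(\beta)=-\frac{(\beta-1)^2+N-1}{(\beta-1)^2}<0$ away from the pole.

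\textbf{The gap is in Step 3.} Your window is $[\tfrac N2-1,\tfrac N2)$, open at $\tfrac N2$ because of your own strict $H^1$ condition. The image of this window under the decreasing map $\Lambda$ is therefore
\begin{itemize}
\item $(\tfrac{N^2}{2(N-2)},\tfrac{N^2-4}{2(N-4)}]$ for $N\ge5$;
\item $(4,\infty)$ for $N=4$;
\item $(-\infty,-\tfrac52]\cup(\tfrac92,\infty)$ for $N=3$.
\end{itemize}
In all three cases you counted $\Lambda(\tfrac N2)$ as attained, but it is attained only at $\beta=\tfrac N2$, which lies outside the window.

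\textbf{This cannot be patched, because the statement is false at those left endpoints.} Take $N\ge5$ and $\lambda=\tfrac{N^2}{2(N-2)}$. The two roots are $\beta=\tfrac N2$ and $\beta=\tfrac N2-\lambda=-\tfrac{N}{N-2}$.
\begin{itemize}
\item For $\beta=\tfrac N2$ one has $\int_{B_1(0)}|\nabla v|^2dx=|S^{N-1}|\,\frac{(\beta-1)^2+N-1}{N}\int_0^1 r^{-1}dr=\infty$, so $v\notin H^1(B_1(0))$.
\item For $\beta=-\tfrac N{N-2}$, the function $v=x_N|x|^{N/(N-2)}$ has second derivatives $O(|x|^{2/(N-2)})$, so $v\in H^2(B_1(0))$.
\end{itemize}
The same happens for $N=4$, $\lambda=4$ (roots $\pm2$, giving $v\notin H^1$ or $v=x_4|x|^2$). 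It also happens for $N=3$, $\lambda=\tfrac92$ (roots $\tfrac32$ and $-3$, giving $v\notin H^1$ or $v=x_3|x|^3\in H^2$).

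So what your argument proves, once Step 3 is corrected, is the statement with these left endpoints removed, and that is sharp. The right endpoints, which come from $\beta=\tfrac N2-1$, are correctly included.

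The paper's proof contains the same slip. In \eqref{eq114}, the condition $\beta_+<\tfrac N2$ is equivalent to $\sqrt{(N-\lambda)^2+4\lambda}<\lambda$, which means $\lambda>\tfrac{N^2}{2(N-2)}$ strictly. For $N=3,4$, the paper's own text in fact derives $\lambda>\tfrac92$ and $\lambda>4$, which contradicts the closed intervals in the statement.
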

\begin{proof}[Proof of Proposition \eqref{ Teorema 5}]

We substitute $ v(x)= x_{N}|x|^{-\beta}$ in the equation \eqref{eq113} and computing we have 
$$ \nabla v = |x|^{-\beta} e_{N} - \beta x_{N} |x|^{-\beta-2}x \quad \textrm{and}\quad x\nabla v(x)=(1-\beta) x_{N}|x|^{-\beta}.$$ 
On the other hand, we get 
$$ \frac{x\nabla v}{|x|^{2}}= (1-\beta)\frac{x_{N}}{|x|^{\beta +2}} \quad \textrm{and}\quad \Delta(x_{N}|x|^{-\beta})= -\beta(N-\beta)\frac{x_{N}}{|x|^{\beta +2}}. $$
Then the equation becomes $ [\beta(N-\beta) + \lambda(1-\beta)]\frac{x_{N}}{|x|^{\beta +2}}=0$ which implies that $$ \beta^{2}-(N-\lambda)\beta -\lambda = 0.$$
Therefore we obtain  that $\beta \in \left\{\frac{N-\lambda + \sqrt{(N-\lambda)^{2}+4\lambda}}{2},\frac{N-\lambda -\sqrt{(N-\lambda)^{2}+4\lambda}}{2}\right\} $.

Firstly, we choose $ \beta_{-} = \frac{N-\lambda -\sqrt{(N-\lambda)^{2}+4\lambda}}{2}. $

It is important to see that $$ |v(x)|= \left|\frac{x_{N}}{|x|^{\beta}}\right|\leq |x|^{1-\beta},\quad |\nabla v(x)|\leq |x|^{-\beta} \quad \mathrm{and} \quad |\Delta v(x)|\leq |x|^{-\beta-1}.$$

For $ \beta_{-} <\frac{N}{2}$ we observe that $$ \int_{B_{1}(0)}v^{2}dx= \int_{0}^{1}r^{N+1-2\beta_{-}}dr < \infty \quad \mathrm{and}\quad  \int_{B_{1}(0)}|\nabla v|^{2}dx =\int_{0}^{1}r^{N-1-2\beta_{-}}dr < \infty$$ which implies $ v \in H^{1}(B_{1}(0))$. 

We want to find the parameters $ \lambda$ such that $\frac{N-\lambda -\sqrt{(N-\lambda)^{2}+4\lambda}}{2} <\frac{N}{2} $. By a direct computation we get
\begin{align}\label{121}
    -\lambda < \sqrt{(N-\lambda)^{2} + 4\lambda}.
\end{align}
If $ \lambda \geq 0$ it is obvious that the inequality \eqref{121} holds. 

For $ \lambda <0$ the relation \eqref{121} is equivalent with $ \lambda < \frac{N^{2}}{2(N-2)}$,  which is possible. 

Therefore we showed  that for any $ \lambda \in \mathbb{R}$ we have $\beta_{-} < \frac{N}{2} $ which means that $ v \in H^{1}(B_{1}(0))$. 

We want to find $ \lambda$ such that $ v \notin H^{2}(B(0,1))$ and then we impose that $ D^{2}v \notin  L^{2}(B(0,1)) $. This requires that $$\int_{B(0,1)}|D^{2}v(x)|^{2} dx=\int_{0}^{1}r^{N-3-2\beta_{-}}dr = \infty.$$
The previous integral is infinite if $ \beta_{-} \geq \frac{N-2}{2}$ and this is  equivalent with 
\begin{align}\label{125}
    \sqrt{(N-\lambda)^{2} + 4\lambda}\leq 2-\lambda.
\end{align}
If $ \lambda >2$ \eqref{125} is impossible, but if $ \lambda \leq 2$ the relation \eqref{125} becomes $$ 2\lambda(4-N)\leq 4-N^{2}.$$

If $ N=3$ we observe that $ \lambda \in(-\infty, -\frac{5}{2}]$ such that $ v \in H^{1}(B(0,1))\setminus H^{2}(B(0,1))$. 

If $ N=4$ we get $ 0\leq -12$ which is false and if $ N \geq 5$  we obtain $ \lambda \geq \frac{N^{2}-4}{2(N-4)}$, but $ \lambda \leq 2$ and again is impossible. 

Now, we analyze the case when $ \beta_{+}=  \frac{N-\lambda + \sqrt{(N-\lambda)^{2}+4\lambda}}{2}$. By computation the condition $ \beta_{+} <\frac{N}{2}$ works when 
\begin{align}\label{eq114}
     \lambda \geq \frac{N^{2}}{2(N-2)},
\end{align}
which ensures $ v \in B_{1}(0)$. 

The condition $ v \notin H^{2}(B(0,1))$ lead to  $ \beta_{+} \geq \frac{N-2}{2}$ which means that 
\begin{align}\label{126}
    N^{2} -2N\lambda +4\lambda \geq -4\lambda +4.
\end{align}

For $ N = 3$ inequalities \eqref{eq114} and \eqref{126} hold when $ \lambda > \frac{9}{2}$ and in this case $ v \in H_{0}^{1}(B_{1}(0))\setminus H^{2}(B_{1}(0))$. 

If $ N =4$ and $ \lambda > 4$ we get $ v \in H_{0}^{1}(B_{1}(0))\setminus H^{2}(B_{1}(0)) $. 

For $ N \geq 5$ we have 
\begin{align}\label{eq115}
    \lambda \leq \frac{N^{2}-4}{2(N-4)}.
\end{align}

Combining \eqref{eq114} and \eqref{eq115} we can deduce the conlusion. 
\end{proof}

Using the previous proposition we can build an interesting example for assertion (c) from theorem \eqref{Teorema 1}.  

\begin{prop}\label{prop}
    Let $ \beta \in \left\{\frac{N-\lambda + \sqrt{(N-\lambda)^{2}+4\lambda}}{2},\frac{N-\lambda -\sqrt{(N-\lambda)^{2}+4\lambda}}{2}\right\}$ and 
    \begin{align}\label{eq2}
 \xi(x):=  \left\{\begin{array}{ll} 
1,  &		\textrm{ for} \quad  |x|< \frac{1}{4}\\
	0, &  \textrm{ for }  |x|\geq \frac{1}{2},\\ 
	\end{array}\right.
\end{align} 
    Then $v(x) =\xi(x) \frac{x_{N}}{|x|^{\beta}}$ is a solution to \eqref{p1}, where $ f(x)=-2\nabla \xi\nabla \left(\frac{x_{N}}{|x|^{\beta}}\right) - \Delta \xi \frac{x_{N}}{|x|^{\beta}}  + \lambda \frac{x_{N}}{|x|^{\beta}} \frac{ x\nabla{\xi}}{|x|^{2}}.$

Moreover, we have that  
\begin{align}
    \left\{\begin{array}{ll} 
v\in H_{0}^{1}(B_{1}(0))\setminus H^{2}(B_{1}(0)),  &		\textrm{ if} \quad  \lambda \in \left(-\infty, -\frac{5}{2}\right] \cup \left[\frac{9}{2},\infty\right) \quad \textrm{and}\quad N=3\\
v\in  H_{0}^{1}(B_{1}(0))\setminus H^{2}(B_{1}(0))	,&\textrm{ if} \quad  \lambda \in [4,\infty) \quad \textrm{and}\quad N=4\\ 
v\in  H_{0}^{1}(B_{1}(0))\setminus H^{2}(B_{1}(0)),  &		\textrm{ if} \quad  \lambda \in \left[\frac{N^{2}}{2(N-2)},\frac{N^{2}-4}{2(N-4)}\right] \quad \textrm{and}\quad N\geq5\\ 
	\end{array}\right.
\end{align}   
\end{prop}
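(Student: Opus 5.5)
The plan is to reduce everything to Proposition \ref{ Teorema 5} by a localization argument. Throughout, $\xi$ is taken smooth and radial with $0\le\xi\le 1$, and we write $w(x):=x_N|x|^{-\beta}$.

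\textbf{Step 1: the equation away from the origin.} By Proposition \ref{ Teorema 5}, $w$ solves $-\Delta w+\lambda\frac{x\cdot\nabla w}{|x|^2}=0$ classically in $\mathbb{R}^N\setminus\{0\}$. The Leibniz rule then gives, pointwise for $x\neq 0$,
$$-\Delta(\xi w)+\lambda\frac{x\cdot\nabla(\xi w)}{|x|^2}=-2\nabla\xi\cdot\nabla w-w\Delta\xi+\lambda w\frac{x\cdot\nabla\xi}{|x|^2}=f .$$
Since $\nabla\xi$ and $\Delta\xi$ are supported in the annulus $\{1/4\le|x|\le 1/2\}$, where $w$ is smooth, we get $f\in C_c^\infty(B_1(0))\subset L^2$. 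Moreover $v=0$ for $|x|\ge 1/2$, so the boundary condition holds trivially.

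\textbf{Step 2: $v$ is a weak solution in the sense of Definition \ref{def1}.} This is the only step needing real care. I would first check that $v\in H_0^1(B_1(0))$. On the relevant ranges of $\lambda$ and for the chosen $\beta$, Proposition \ref{ Teorema 5} gives $\beta<N/2$, hence $w\in H^1(B_1)$. Multiplying by $\xi$ preserves this, and the compact support gives the zero trace.

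Next, fix $\phi\in C_c^\infty(B_1)$ and a cutoff $\eta_\delta$ vanishing on $B_\delta$, equal to $1$ outside $B_{2\delta}$, with $|\nabla\eta_\delta|\le C/\delta$. Integrating by parts against $\phi\eta_\delta$ on $B_1\setminus B_\delta$, where everything is smooth, gives the weak identity with an error term
$$\int v\,\nabla v\cdot\nabla\eta_\delta\,\phi\;\text{-type terms},\qquad\text{more precisely}\qquad \int\nabla v\cdot\nabla\eta_\delta\,\phi\,dx .$$
This error is bounded by
$$C\delta^{-1}\int_{B_{2\delta}}|x|^{-\beta}\,dx\sim\delta^{N-1-\beta}.$$
It tends to $0$ because $\beta<N/2\le N-1$ for $N\ge 2$.

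The drift term $\int\lambda\frac{x\cdot\nabla v}{|x|^2}\phi$ is absolutely convergent by Cauchy--Schwarz and the Hardy inequality \eqref{HI}, since $\phi/|x|\in L^2$ and $\nabla v\in L^2$. Dominated convergence therefore passes to $\delta\to0$ in every term, and density extends the identity to all $\phi\in H_0^1$.

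\textbf{Step 3: failure of $H^2$.} Since $\xi\equiv1$ on $B_{1/4}$, we have $v=w$ there, and it suffices to show $D^2w\notin L^2(B_{1/4})$. Here I would refine the bound used in Proposition \ref{ Teorema 5}: the upper bound $|D^2w|\le C|x|^{-\beta-1}$ is not enough, and we need the matching lower bound. By homogeneity,
$$D^2w(x)=|x|^{-\beta-1}A\!\left(\frac{x}{|x|}\right),$$
where $A$ is a smooth matrix-valued function on the sphere that is not identically zero. Indeed, if $A\equiv0$ then $w$ would be affine, which is impossible since $\beta\neq0$ (recall $\beta=0$ would force $\lambda=0$ in $\beta^2-(N-\lambda)\beta-\lambda=0$). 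Hence
$$\int_{B_{1/4}}|D^2w|^2\,dx=\|A\|^2_{L^2(S^{N-1})}\int_0^{1/4}r^{N-3-2\beta}\,dr ,$$
which diverges exactly when $\beta\ge\frac{N-2}{2}$.

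\textbf{Step 4: the parameter ranges.} It remains to verify that the parameter ranges in the statement give, for the appropriate root ($\beta_+$ for $N\ge5$, $N=4$, and $N=3$ with $\lambda\ge 9/2$; $\beta_-$ for $N=3$ with $\lambda\le-5/2$), both $\beta<N/2$ and $\beta\ge(N-2)/2$. These are precisely the computations \eqref{121}--\eqref{eq115} carried out in Proposition \ref{ Teorema 5}, which I would cite, rechecking the endpoints: at $\lambda=\frac{N^2}{2(N-2)}$, the requirement is $\beta_+<N/2$, and this should be confirmed to be strict, otherwise $H^1$ fails.

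I expect the main obstacle to be Step 2, namely justifying that the singularity at $0$ contributes no distributional mass. This is why the estimate $\delta^{N-1-\beta}\to0$ must be stated carefully. A secondary obstacle is the borderline endpoint $\lambda=\frac{N^2}{2(N-2)}$, where the $H^1$ membership of $w$ should be double-checked.
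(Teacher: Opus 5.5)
Your route is the paper's: localize with $\xi$ and reduce to the homogeneous solution $w=x_N|x|^{-\beta}$. Your Steps 1--3 are sound and in fact tighter than the paper's argument. The paper never checks that the origin carries no distributional mass. It also records only the upper bound $|D^2 v|\le |x|^{-\beta-1}$, which cannot by itself show $D^2v\notin L^2$.

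\textbf{The gap is in Step 4,} at exactly the endpoint you flagged and left open. The check fails there, and no argument can close it, because the statement is false at the left endpoints. Set $\lambda_0=\frac{N^2}{2(N-2)}$, which is $\frac92$ for $N=3$ and $4$ for $N=4$. Then $(N-\lambda_0)^2+4\lambda_0=\lambda_0^2-2(N-2)\lambda_0+N^2=\lambda_0^2$. Hence $\beta_+=\frac N2$ and $\beta_-=\frac N2-\lambda_0=-\frac{N}{N-2}$.
\begin{itemize}
\item For $\beta_+$: $\nabla w$ is homogeneous of degree $-\frac N2$ with a nonvanishing angular profile (it equals $|x|^{-N/2}e_N$ on $\{x_N=0\}$). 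So $\int_{B_{1/4}}|\nabla w|^2\,dx=c\int_0^{1/4}r^{-1}\,dr=\infty$, and $v\notin H^1$.
\item For $\beta_-$: $w=x_N|x|^{N/(N-2)}$ is $C^2$ near the origin, so $v\in H^2$.
\end{itemize}
So at $\lambda_0$ neither root gives $v\in H^1_0\setminus H^2$.

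The computations you planned to cite carry this error. The condition $\beta_+<\frac N2\iff\sqrt{(N-\lambda)^2+4\lambda}<\lambda\iff\lambda>\frac{N^2}{2(N-2)}$ is strict, but \eqref{eq114} records it as $\lambda\ge\frac{N^2}{2(N-2)}$. The surrounding text even says $\lambda>\frac92$ and $\lambda>4$ for $N=3,4$, while the statement uses closed intervals.

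What your plan actually proves is the proposition with the left endpoints removed:
\begin{itemize}
\item $N=3$: $\lambda\in(-\infty,-\frac52]\cup(\frac92,\infty)$;
\item $N=4$: $\lambda\in(4,\infty)$;
\item $N\ge5$: $\lambda\in\bigl(\frac{N^2}{2(N-2)},\frac{N^2-4}{2(N-4)}\bigr]$.
\end{itemize}
The endpoints $-\frac52$ and $\frac{N^2-4}{2(N-4)}$ are legitimately included. There the relevant root equals $\frac{N-2}{2}<\frac N2$, so $v\in H^1$, while $\int_0 r^{N-3-2\beta}\,dr$ diverges logarithmically. You should state and prove this corrected range rather than defer to the cited inequalities.
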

\begin{proof}[Proof of Proposition \eqref{prop}]
Using the linearity of the operator with drift term and taking into account the proposition \eqref{ Teorema 5} we get that $$ -\Delta v+\lambda \frac{x\nabla v}{|x|^{2}}= -2\nabla \xi\nabla \left(\frac{x_{N}}{|x|^{\beta}}\right) - \Delta \xi \frac{x_{N}}{|x|^{\beta}}  + \lambda \frac{x_{N}}{|x|^{\beta}} \frac{ x\nabla{\xi}}{|x|^{2}}:=f(x)$$
It is obvious that $ f \in L^{2}(B_{1}(0))$ since $ f=0$ in $ B_{\frac{1}{4}}(0) \cup B_{\frac{1}{2}}^{c}(0)$ and f is bounded in $ \frac{1}{4}\leq  |x|\leq \frac{1}{2}$.
 If $ |x|> \frac{1}{4}$ we get $ v \in H^{2}(B_{\frac{1}{2}}(0))$ by standard elliptic regularity.  
 
 If $ |x|<\frac{1}{4}$ we obtain that $ v(x)$ coincides with $\frac{x_{N}}{|x|^{\beta}}$ and in proposition \eqref{ Teorema 5} we have the description of the regularity of  $\frac{x_{N}}{|x|^{\beta}}$. Hence the conclusion follows. 
\end{proof}
Next, our goal is to construct an example which guarantees the assertion (b) of the theorem \eqref{Teorema 2} for the problem \eqref{p2} in the unit ball

\begin{subequations}\label{eq126}
\begin{empheq}[left=\empheqlbrace]{align}
-\Delta v +\lambda\frac{v}{|x|^{2}}
&=|x|^{\alpha},
\text{in }B_{1}(0),\label{eq126a}\\
v&=0,
\text{on }\partial B_{1}(0),\label{eq126b}
\end{empheq}
\end{subequations}

Again, we require that $ \alpha >-\frac{N}{2}$ since we want $ |x|^{\alpha} \in L^{2}(B_{1}(0))$.
Also, we denote $$ \lambda_{\ast}= -\frac{(N-2)^{2}}{4}$$
\begin{prop}\label{ Teorema 8}
Let $ \lambda > \lambda_{\ast} $ and $ \alpha \in (-\frac{N}{2}, \infty)$ with $ \lambda \neq (\alpha +2)(\alpha +N)$.     Radial solutions of problem \eqref{eq126a} are of the form 
    \begin{align}
        v(x) = \frac{|x|^{\alpha +2}}{\lambda - (\alpha+2)(\alpha+N)} 
+ C_1 \, |x|^{-\frac{N-2}{2}+\sqrt{\lambda+ \lambda_{\ast}}} + C_2|x|^{-\frac{N-2}{2}-\sqrt{\lambda + \lambda_{\ast}}}.
    \end{align}
    Moreover, if $ \lambda \in \left(-\frac{(N-2)^{2}}{4},-\frac{N(N-4)}{4}\right]$ thus $ v \in H_{0}^{1}(B_{1}(0)) \setminus H^{2}(B_{1}(0))$. 
\end{prop}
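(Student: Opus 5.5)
The plan is to solve the radial ODE explicitly, use the boundary condition and $H^1_0$-membership to fix the constants, check that the resulting function is the weak solution of Definition \ref{def2}, and finally test the Hessian near the origin. Throughout I write $\gamma:=\sqrt{\lambda+\frac{(N-2)^2}{4}}>0$. This is what the exponent in the statement must mean for $\lambda>\lambda_\ast$: taken literally, $\lambda+\lambda_\ast$ uses the negative number $\lambda_\ast$, so the intended quantity is $\lambda-\lambda_\ast$.

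\textbf{Step 1: the radial formula.} Write $v=g(r)$. Then \eqref{eq126a} becomes
\begin{equation*}
-g''-\frac{N-1}{r}g'+\lambda\frac{g}{r^2}=r^\alpha .
\end{equation*}
This is an Euler equation. The ansatz $r^m$ in the homogeneous part gives $m(m+N-2)=\lambda$, so $m_\pm=-\frac{N-2}{2}\pm\gamma$. The ansatz $Ar^{\alpha+2}$ for a particular solution gives $A\bigl(\lambda-(\alpha+2)(\alpha+N)\bigr)=1$, which is solvable exactly because $\lambda\neq(\alpha+2)(\alpha+N)$. The general radial solution is therefore the stated three-term formula.

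\textbf{Step 2: selecting the $H_0^1$ solution.} I would show $C_2=0$ is forced. The gradient of $r^{m_-}$ is $m_-r^{m_--1}$, and $\int_0^1 r^{2(m_--1)+N-1}\,dr$ diverges because $2m_-+N-2=-2\gamma<0$. By contrast $r^{m_+}\in H^1(B_1)$, since $2m_++N-2=2\gamma>0$. Likewise $r^{\alpha+2}\in H^2(B_1)$ for $\alpha>-\frac N2$, by the computation already done in Proposition \ref{ Teorema 4}. The boundary condition then fixes $C_1=-A\neq0$, so
\begin{equation*}
v=A\bigl(r^{\alpha+2}-r^{m_+}\bigr)\in H_0^1(B_1(0)).
\end{equation*}
Next I would check that this $v$ satisfies \eqref{20_b}. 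Test against $\phi\in C_c^\infty(B_1)$ and integrate by parts on $B_1\setminus B_\delta$. The boundary term on $\partial B_\delta$ is bounded by $\delta^{N-1}|g'(\delta)|\,\|\phi\|_\infty$, which is of order $\delta^{m_++N-2}$ with $m_++N-2=\frac{N-2}{2}+\gamma>0$, so it vanishes as $\delta\to0$. Density then extends the identity to all of $H_0^1$. By Lax--Milgram uniqueness, $v$ is \emph{the} weak solution for the source $|x|^\alpha$.

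\textbf{Step 3: failure of $H^2$.} Since $r^{\alpha+2}\in H^2(B_1)$, it suffices to show $r^{m_+}\notin H^2(B_1)$. For $u=r^m$ the Hessian has eigenvalues $m(m-1)r^{m-2}$ (radial direction) and $mr^{m-2}$ (tangential directions, with multiplicity $N-1$). Hence $|D^2u|\ge |m|\,r^{m-2}$ whenever $m\neq0$, and
\begin{equation*}
\int_{B_1}|D^2 r^{m_+}|^2\,dx\gtrsim\int_0^1 r^{2m_+-4+N-1}\,dr=\infty \iff 2m_++N-4\le0\iff\gamma\le1\iff\lambda\le-\frac{N(N-4)}{4}.
\end{equation*}

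\textbf{The main obstacle} is the degenerate exponent $m_+=0$, which occurs exactly when $\lambda=0$. Then $r^{m_+}$ is constant, $v=A(r^{\alpha+2}-1)$ lies in $H^2(B_1)$, and the conclusion fails. The value $\lambda=0$ lies in $\left(-\frac{(N-2)^2}{4},-\frac{N(N-4)}{4}\right]$ only when $N\in\{3,4\}$. For $N\ge5$ the interval is contained in $(-\infty,0)$ and gives $m_+<0$, so the argument goes through. For $N=3,4$, I would add the hypothesis $\lambda\neq0$; this is consistent with the classical Calder\'on--Zygmund result, which gives $H^2$-regularity at $\lambda=0$. With that proviso, Steps 1--3 give $v\in H_0^1(B_1(0))\setminus H^2(B_1(0))$, which is assertion (b) of Theorem \ref{Teorema 2}.
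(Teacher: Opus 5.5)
Your proposal is correct and takes the same route as the paper: solve the Euler equation, set $C_2=0$, and test the radial integrability of $|x|^{m_+}$ and of its Hessian. It also supplies points the paper passes over, namely the sign correction $\sqrt{\lambda-\lambda_\ast}=\sqrt{\lambda+\frac{(N-2)^2}{4}}$, the fact that $C_2=0$ is forced by $H_0^1$-membership, and the check that $v$ really is the weak solution of Definition~\ref{def2}. Your caveat is also right: the paper's computation $\int_{B_1}|D^2|x|^{m_+}|^2dx=\int_0^1 r^{2\gamma-3}\,dr$ drops the factor $m_+^2\bigl((m_+-1)^2+N-1\bigr)$, which vanishes exactly at $\lambda=0$, so for $N\in\{3,4\}$ and $\lambda=0$ the statement as written fails ($v=A(|x|^{\alpha+2}-1)\in H^2(B_1(0))$, as Calder\'on--Zygmund predicts) and that value must be excluded.
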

\begin{proof}[Proof of Proposition \eqref{ Teorema 8}]

We look for a radial solution of the form $v(x) = g(|x|) = g(r)$.

Thus, the equation becomes:

$$- g''(r) - \frac{N-1}{r} g'(r) + \lambda \frac{g(r)}{r^{2}}  = r^\alpha.$$

We multiply by $ r^{2}$ the previous identity and we get that 
\begin{align}\label{eq127}
    -r^{2}g^{\prime \prime}(r)-(N-1)rg^{\prime}(r) + \lambda g(r) = r^{\alpha +2}
\end{align}
  Firstly we solve the homogeneous equation of \eqref{eq127} 
$$-r^{2}g^{\prime \prime}(r)-(N-1)rg^{\prime}(r) + \lambda g(r) = 0 $$
and we get the constants $ C_{1}, C_{2} \in \mathbb{R}$ and the homogeneous solutions 
$$g_{h}(r)= C_{1} r^{-\frac{N-2}{2}+\sqrt{\lambda+ \lambda_{\ast}}} + C_2r^{-\frac{N-2}{2}-\sqrt{\lambda + \lambda_{\ast}}}$$
We proceed as follows by means of the  method of
variation of constants and we find a particular solution to the equation \eqref{eq127}
$$ g_{p}(r)=\frac{r^{\alpha +2}}{\lambda - (\alpha+2)(\alpha+N)} $$
Therefore we have that
 \begin{align}
        v(x) = \frac{r^{\alpha +2}}{\lambda - (\alpha+2)(\alpha+N)} 
+ C_1 \, |x|^{-\frac{N-2}{2}+\sqrt{\lambda+ \lambda_{\ast}}} + C_2|x|^{-\frac{N-2}{2}-\sqrt{\lambda + \lambda_{\ast}}},
    \end{align}
where $ \lambda > \lambda_{\ast}$ and $ \lambda \neq (\alpha+2)(\alpha+N)$. 

If $ C_{1}+C_{2} = \frac{1}{(\alpha+2)(\alpha+N)-\lambda}$ we have the boundary condition \eqref{eq126b} verified. 
    
By the proof of the proposition \ref{ Teorema 4} we observe that $ |x|^{\alpha+ 2} \in H^{2}(B_{1}(0))$ for $ \alpha> -\frac{N}{2}$.  

We choose $ C_{2}=0$ and we want to find parameters $ \lambda$ such that $ v \in H^{2}(B_{1}(0))$. 

For $ \lambda > -\frac{(N-2)^{2}}{4}$ we obtain

\begin{align*}
    \left\{\begin{array}{ll} 
 \int_{B_{1}(0)} ||x|^{-\frac{N-2}{2}+\sqrt{\lambda +\lambda_{\ast}}}|^{2}dx = \int_{0}^{1}r^{-N+2 +2 \sqrt{\lambda_{\star}+\lambda} +N-1 }dr < \infty &		\\
 \int_{B_{1}(0)} |\nabla |x|^{-\frac{N-2}{2}+\sqrt{\lambda +\lambda_{\ast}}}|^{2}dx = \int_{0}^{1}r^{-N+2\sqrt{\lambda_{\star}+ \lambda}+N-1}dr <\infty,\\ 
	\end{array}\right.
\end{align*}
which gives us that $ |x|^{-\frac{N-2}{2}+\sqrt{\lambda +\lambda_{\ast}}} \in H^{1}(B_{1}(0))$ and therefore $ v \in H_{0}^{1}(B_{1}(0))$. 

For $ \lambda >-\frac{N(N-4)}{4} $ we have 
$$ \int_{B_{1}(0)}|D^{2}|x|^{-\frac{N-2}{2}+\sqrt{\lambda +\lambda_{\ast}}}|^{2}dx = \int _{0}^{1}r^{-N-2+2\sqrt{\lambda_{\ast}+\lambda}+N-1}dr = \int_{0}^{1}r^{-3 + 2 \sqrt{\lambda_{\ast}+\lambda}}dr < \infty,$$
which means that $ |x|^{-\frac{N-2}{2}+\sqrt{\lambda +\lambda_{\ast}}} \in H^{2}(B_{1}(0))$. 

Hence we proved that  $ v \in H_{0}^{1}(B_{1}(0)) \setminus H^{2}(B_{1}(0))$ for $ \lambda \in \left(-\frac{(N-2)^{2}}{4},-\frac{N(N-4)}{4}\right]$. 

\end{proof}

\section{Proof of the main regularity results}

Firstly, we state the maximum principle for operator $ -\Delta + \lambda \frac{x\nabla \cdot}{|x|^{2}}$, which is an important tool in the proof of the main result. 

\begin{prop}\label{ Teorema 2}
    Let $ \Omega$ denote a smoothly bounded domain of $ \mathbb{R}^{N}$, $ N \ge 3$. Assume that $ v \in H^{1}( \Omega) $  satisfies, in the weak sense,
\begin{align}\label{eq102}
    \left\{\begin{array}{cc} 
-\Delta v +\lambda\frac{x\cdot \nabla v}{|x|^{2}} \geq 0,  &		\textrm{ in} \quad  \Omega \\
	v\ge0, &  \textrm{ on }  \partial \Omega,\\ 
	\end{array}\right.
\end{align}
where $ \lambda < \frac{N-2}{2}$. Then $ v \ge 0$ in $ \Omega$. 
\end{prop}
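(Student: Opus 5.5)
Test the weak inequality against the negative part $v^-=\max\{-v,0\}$ and use the Hardy inequality \eqref{HI} to show the resulting quadratic form is coercive exactly when $\lambda<\frac{N-2}{2}$, which forces $v^-\equiv 0$.

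First we fix the weak formulation of \eqref{eq102}. It means
\[
\int_\Omega \Big(\nabla v\cdot\nabla\phi+\lambda\frac{x\cdot\nabla v}{|x|^2}\phi\Big)\,dx\ \ge 0
\quad\text{for all } \phi\in H_0^1(\Omega),\ \phi\ge 0 .
\]
The drift term is well defined: by the Hardy inequality \eqref{HI} and Cauchy--Schwarz, $\big|\int \frac{x\cdot\nabla v}{|x|^2}\phi\big|\le \|\nabla v\|_{2}\,\|\phi/|x|\|_{2}<\infty$. Since $v\ge 0$ on $\partial\Omega$ in the trace sense, $v^-\in H_0^1(\Omega)$ and $v^-\ge0$, so it is an admissible test function. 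On the set $\{v<0\}$ we have $\nabla v=-\nabla v^-$, and $\nabla v^-=0$ a.e. elsewhere. Testing with $\phi=v^-$ therefore gives
\[
-\int_\Omega |\nabla v^-|^2\,dx-\lambda\int_\Omega \frac{x\cdot\nabla v^-}{|x|^2}\,v^-\,dx\ \ge 0,
\qquad\text{i.e.}\qquad l_\lambda[v^-]\le 0,
\]
with $l_\lambda$ the energy from \eqref{energy_funct}.

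The key identity is the integration by parts used in \eqref{energy_funct}:
\[
\int_\Omega \frac{x\cdot\nabla w}{|x|^2}\,w\,dx=\frac12\int_\Omega \frac{x\cdot\nabla (w^2)}{|x|^2}\,dx=-\frac{N-2}{2}\int_\Omega \frac{w^2}{|x|^2}\,dx,
\]
because $\operatorname{div}(x/|x|^2)=(N-2)/|x|^2$. We prove it for $w\in C_c^\infty(\Omega)$. Both sides are continuous on $H_0^1(\Omega)$, by Hardy plus Cauchy--Schwarz, so the identity extends to $w=v^-$ by density. This density step is where some care is needed, since the singularity at $0$ has to be controlled. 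It is handled precisely by \eqref{HI} with $N\ge3$; no cut-off near the origin is required, because both functionals are already bounded in the $H_0^1$ norm.

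Hence
\[
l_\lambda[v^-]=\int_\Omega|\nabla v^-|^2-\lambda\frac{N-2}{2}\int_\Omega\frac{(v^-)^2}{|x|^2}\ \ge\ \Big(1-\frac{2\lambda}{N-2}\Big)\int_\Omega|\nabla v^-|^2,
\]
exactly as in \eqref{energy_funct}. If $\lambda\le0$ the bound is immediate, since the Hardy term is nonnegative. If $0<\lambda<\frac{N-2}{2}$ it follows from \eqref{HI}, because $\lambda\frac{N-2}{2}<\frac{(N-2)^2}{4}$. The coefficient is strictly positive, so combining with $l_\lambda[v^-]\le 0$ gives $\nabla v^-=0$. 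Then $v^-\in H_0^1(\Omega)$ forces $v^-=0$, that is, $v\ge0$ a.e. in $\Omega$.
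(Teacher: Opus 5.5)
Your proof is correct and follows the paper's argument. You test with $v^-$, rewrite the drift term via $\int_\Omega \frac{x\cdot\nabla w}{|x|^2}\,w\,dx=-\frac{N-2}{2}\int_\Omega\frac{w^2}{|x|^2}\,dx$, and conclude $\nabla v^-=0$ directly for $\lambda\le 0$ and via \eqref{HI} for $0<\lambda<\frac{N-2}{2}$; your density justification of that identity on $H_0^1(\Omega)$ is a step the paper leaves implicit.

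One minor slip, inherited from \eqref{energy_funct}: the bound $l_\lambda[v^-]\ge\bigl(1-\frac{2\lambda}{N-2}\bigr)\int_\Omega|\nabla v^-|^2\,dx$ is not valid for $\lambda<0$, since it would amount to the reverse of \eqref{HI}. However, your stated reason for that case (nonnegativity of the Hardy term) gives $l_\lambda[v^-]\ge\int_\Omega|\nabla v^-|^2\,dx$, which is all you need.
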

\begin{proof}[Proof of Proposition  \eqref{ Teorema 2}]

Let $ v$ denote a solution to \eqref{eq102}. We know that 
\begin{align}\label{eq103}
    \int_{\Omega}\left[ \nabla v \cdot \nabla \phi + \lambda \frac{x\nabla v}{|x|^{2}}\phi\right ]dx \geq 0,
\end{align}
for any $ \phi \in H_{0}^{1}(\Omega)$ with $ \phi \geq 0$.

We set 
\begin{align}\label{eq100}
 v^{-}(x)= \left\{\begin{array}{ll} 
-v,  &		v\leq 0\\
	0, &  v\geq 0.\\ 
	\end{array}\right.
\end{align}
It is known that $ v^{-} \in H^{1}(\Omega)$ when $ v \in H^{1}(\Omega)$. Since $ v \geq 0$ on $ \partial \Omega$ we have $ v^{-} = 0$ on $ \partial \Omega$. 
We can set $ \phi:=v^{-} \in H^{1}_{0}(\Omega)$. Keeping into account that $ v = v^{+}-v^{-}$ the inequality \eqref{eq103} becomes 
\begin{align*}
     -\int_{\Omega}|\nabla v^{-}|^{2}dx + \frac{\lambda}{2}(N-2)\int_{\Omega}\frac{(v^{-})^{2}}{|x|^{2}}dx \geq 0.
\end{align*}
It is obvious that if $ \lambda \leq0$ then $ 0 \leq -\int_{\Omega}|\nabla v^{-}|^{2}dx$ and we deduce that $ \int_{\Omega} |\nabla v^{-}|^{2}dx = 0 $. 

If $ 0 \leq \lambda < \frac{N-2}{2}$ applying Hardy inequality we have 
$$ 0\leq -\int_{\Omega}|\nabla v^{-}|^{2}dx + \frac{\lambda}{2}(N-2)\int_{\Omega}\frac{(v^{-})^{2}}{|x|^{2}}dx \leq -\left[\frac{-2\lambda+N-2 }{N-2}\right]\int_{\Omega}|\nabla v^{-}|^{2}dx.$$
Since $\lambda < \frac{N-2}{2} $ then $ \frac{-2\lambda+N-2 }{N-2} >0$ which means that $ \int_{\Omega} |\nabla v^{-}|^{2}dx = 0 $. 

Therefore  we have $ v^{-}=0 $ and $ v \geq 0$ in $ \Omega$. 

\end{proof}

\begin{proof}[Proof of Theorem \eqref{Teorema 1}]
We split the proof into six steps.

\textbf{Step 1} We show that $$ \|v\|_{H^{1}_{0}(\Omega)} \leq C \|f\|_{L^{2}(\Omega)}.$$
Indeed, we take as test function $ \phi = v \in H^{1}_{0}(\Omega)$ in \eqref{20}.

The integration by parts leads to  
$$ \int_{\Omega}|\nabla v|^{2}dx - \lambda \frac{N-2}{2} \int_{\Omega} \frac{ v^{2}}{\|x\|^{2}}  dx = \int_{\Omega}fv dx .$$

Applying Hardy inequality, Cauchy-Schwartz inequality and Poincar\'e inequality we obtain  for $ \lambda < \frac{N-2}{2}$ a constant $ C(\lambda,N)>0$  such that 
\begin{align*}
    \|\nabla v\|^{2}_{L^{2}({\Omega})} &\leq \|f\|_{L^{2}(\Omega)}\|v\|_{L^{2}(\Omega)} \\
    & \leq C(\lambda,N) \|f\|_{L^{2}(\Omega)}\|\nabla v\|_{L^{2}(\Omega)}.
\end{align*}
Therefore we obtain the estimate $ \|v\|_{H^{1}_{0}(\Omega)} \leq C(\lambda,N) \|f\|_{L^{2}(\Omega)}$.

\textbf{Step 2} We want to construct an approximation Dirichlet problem for our case. We take a cut-off function $ \theta \in C^{\infty}([0,\infty))$, where $ 0 \leq \theta\leq 1$ such that 
\begin{align}\label{eq2a}
 \theta(x):=  \left\{\begin{array}{ll} 
0,  &		\textrm{ for} \quad  x< 1\\
	1, &  \textrm{ for }  x\geq 2,\\ 
	\end{array}\right.
\end{align}

We define $ \theta_{\varepsilon}(x)= \theta\left(\frac{|x|}{\varepsilon}\right)$ and we have that $ \theta_{\varepsilon}(x) = 0 $ in $ B_{\varepsilon}(0)$ with $ \theta_{\varepsilon}(x) = 1$ in $ \Omega \setminus B_{2\varepsilon}(0)$. It is clear that $ \theta_{\varepsilon} \rightarrow 1$ a.e and also in $ L^{2}(\Omega)$-norm when $ \varepsilon \rightarrow 0$.  

We set $ f_{\varepsilon}:= f\theta_{\varepsilon} \in L^{2}(\Omega)$ and we take the following Dirichlet problem 
\begin{align}\label{eq101}
  \left\{\begin{array}{ll} 
-\Delta v_{\varepsilon} + \lambda \frac{x\nabla v_{\varepsilon}}{|x|^{2}}= f_{\varepsilon}(x),  &		x\textrm{ in} \quad \Omega\\
v_{\varepsilon}(x)=	0, & x \textrm{ on }  \partial \Omega.\\ 
	\end{array}\right.
\end{align}

By Lax-Milgram theorem we know that \eqref{eq100} admits a unique solution $ v_{\varepsilon} \in H^{1}_{0}(\Omega)$ for $ \lambda < \frac{N-2}{2}$ and we have 
\begin{align}\label{eq102b}
  \left\{\begin{array}{ll} 
\int_{\Omega}\left[\nabla v_{\varepsilon}\cdot\nabla \phi + \lambda \frac{x\nabla v_{\varepsilon}}{|x|^{2}}\phi\right] dx = \int_{\Omega}f_{\varepsilon}\phi dx,  \quad\forall \phi \in H^{1}_{0}(\Omega) \\
v_{\varepsilon} \in H_{0}^{1}(\Omega).	\\ 
	\end{array}\right.
\end{align}

Since $ f_{\varepsilon} =0 $ in $ B_{\varepsilon}(0)$ we have that $$ \int_{B_{\varepsilon}(0)}\left[\nabla v_{\varepsilon}\cdot\nabla \phi + \lambda \frac{x\nabla v_{\varepsilon}}{|x|^{2}}\phi\right] dx =0 , $$
for any $ \phi \in H^{1}_{0}(B_{\varepsilon}(0))$. 

We restrict our attention to the region $ B_{\varepsilon}(0)\setminus \{0\}$ and we consider $ \phi \in C^{\infty}_{c}((B_{\varepsilon}(0)\setminus\{0\}))$. In this case the potential $ \frac{x}{|x|^{2}}$ is bounded in any compact included in $ B_{\varepsilon}(0) \setminus \{0\}$. 

In fact, by standard elliptic regularity we have that $ v_{\varepsilon} \in C^{\infty}(B_{\varepsilon}(0)\setminus\{0\})$. Indeed, we could consider $0< \varepsilon_{0}< \varepsilon$. 

Thus we get 
\begin{align}\label{eq101b}
  \left\{\begin{array}{ll} 
\int_{B_{\varepsilon}(0)}\left[\nabla v_{\varepsilon}\cdot\nabla \phi + \lambda \frac{x\nabla v_{\varepsilon}}{|x|^{2}}\phi\right] dx =0 ,  \quad\forall \phi \in H^{1}_{0}(B_{\varepsilon}(0)\setminus B_{\varepsilon_{0}}(0))\\
v_{\varepsilon} \in H^{1}(B_{\varepsilon}(0)\setminus B_{\varepsilon_{0}}(0))	\\ 
	\end{array}\right.
\end{align}

Since $ \frac{1}{|x|} \in C^{\infty}(B_{\varepsilon}(0)\setminus B_{\varepsilon_{0}}(0))$ we can apply \cite[Ch. 6, Th. 3]{Charro} and we obtain that $ v_{\varepsilon} \in C^{\infty}(B_{\varepsilon}(0)\setminus B_{\varepsilon_{0}}(0))$, but $ \varepsilon_{0}$ was chosen arbitrary and therefore we can conclude that $ v_{\varepsilon} \in C^{\infty}(B(0,\varepsilon)\setminus\{0\})$. 

Taking  $ \phi \in C^{\infty}_{c}(B_{\varepsilon}(0)\setminus{0})$ and integrating by parts \eqref{eq101} we obtain that $$ \int_{B_{\varepsilon}(0)\setminus\{0\}}\left(-\Delta v_{\varepsilon} + \lambda \frac{x\nabla v_{\varepsilon}}{|x|^{2}}\right)\phi(x)dx =0.$$
Since $ -\Delta v_{\varepsilon} + \lambda \frac{x\nabla v_{\varepsilon}}{|x|^{2}} \in L^{1}_{loc}(B_{\varepsilon}(0)\setminus{0})$ we can use the fundamental lemma of the calculus of variations and hence we get that $$-\Delta v_{\varepsilon} + \lambda \frac{x\nabla v_{\varepsilon}}{|x|^{2}} =0,  $$ where $ v_{\varepsilon} \in C^{\infty}(B_{\varepsilon}(0)\setminus \{0\})$. 

\textbf{Step 3} We prove that $ v_{\varepsilon} \in L^{\infty}(B_{\delta}(0)),$ where $ \delta < \varepsilon$. 

By \textbf{step 2} we know that $ v_{\varepsilon} \in C^{\infty}(B_{\varepsilon}(0)\setminus\{0\})$ and we have that $ v_{\varepsilon}$ is bounded on $ \partial B(0,\delta)$ for any $ \delta < \varepsilon$. 

We consider  the following Dirichlet problem
\begin{align}
    \left\{\begin{array}{cc} 
-\Delta (\|v_{\varepsilon}\|_{L^{\infty}(\partial B_{\delta}(0))} - v_{\varepsilon}) +\lambda\frac{x\cdot \nabla (\|v_{\varepsilon}\|_{L^{\infty}(\partial B_{\delta}(0))} - v_{\varepsilon})}{|x|^{2}} = 0,  &		\textrm{ in} \quad  B_{\delta}(0) \\
	\|v_{\varepsilon}\|_{L^{\infty}(\partial B_{\delta}(0))} - v_{\varepsilon}\ge0, &  \textrm{ on }  \partial B_{\delta}(0),\\ 
	\end{array}\right.
\end{align}
where $\|v_{\varepsilon}\|_{L^{\infty}(\partial B_{\delta}(0))} - v_{\varepsilon} \in H^{1}(B_{\delta}(0))\cap C^{\infty}(B_{\delta}(0) \setminus \{0\})$ and $ \lambda <\frac{N-2}{2}$. 

Applying \ref{ Teorema 2}  we get that $ \|v_{\varepsilon}\|_{L^{\infty}(\partial B_\delta(0))} - v_{\varepsilon} \geq 0$ and we deduce that $$ \|v_{\varepsilon}\|_{L^{\infty}(B_{\delta}(0))} \leq \|v_{\varepsilon}\|_{L^{\infty}(\partial B_{\delta}(0))}.$$ Hence we have that $ v_{\varepsilon} \in L^{\infty}(B_{\delta}(0))$. 

\textbf{Step 4} We show that $ v_{\varepsilon} \in H^{2}(\Omega)$. 

 We multiply by $ |x|^{-\lambda}$ the equation $ -\Delta v_{\varepsilon}+ \lambda \frac{x\nabla v_{\varepsilon}}{|x|^{2}}=0$ on $ B_{\varepsilon}(0) \setminus \{0\}$ and we get that 
$$ - \Delta v_{\varepsilon} |x|^{-\lambda} + \lambda x\cdot \nabla v _{\varepsilon}|x|^{-\lambda -2} = 0, $$
which leads at the following divergence form 
$$ - |x|^{\lambda}\mathrm{div}(|x|^{-\lambda}\nabla v_{\varepsilon}) = 0 $$
Since $ v_{\varepsilon} \in C^{\infty}(B_{\varepsilon}(0)\setminus\{0\})$ we avoid the singularity and we have 
\begin{align}\label{eq104}
    \mathrm{div}(|x|^{-\lambda}\nabla v_{\varepsilon}) = 0.
\end{align}
We choose  the sequence of functions 
$ (\psi_{n})_{n \in \mathbb{N}}\subset C_{c}^{\infty}(\mathbb{R}^{N})$ such that

\begin{align}
 \psi_{n}(x)=   \left\{\begin{array}{cc} 
0,  &		\textrm{ for}\quad |x| \leq \frac{1}{n} \ \\
	1, &  \textrm{ for }\quad  \frac{2}{n}\leq|x| \leq \frac{\delta}{4}\\   0    &  \textrm{ for }\quad  |x| \geq \frac{\delta}{2}\\ 
	\end{array}\right.
\end{align}

We multiply the equation \eqref{eq104} by the function $ \psi_{n}|x|^{\lambda -2}v_{\varepsilon} \in C_{c}^{\infty}(B_{\delta}(0)\setminus \{0\})$. Integrating by parts we get that 
$$ 0 = \int_{B_{\delta}(0)}\mathrm{div}(|x|^{-\lambda}\nabla v_{\varepsilon})\psi_{n}|x|^{\lambda -2}v_{\varepsilon}dx= -\int_{B_{\delta}(0)}|x|^{-\lambda}\nabla v_{\varepsilon}\mathrm{div}(\psi_{n}|x|^{\lambda-2}v_{\varepsilon})dx$$
A direct computation yields 
$$ 0 = - \int_{B_{\delta}(0)}\frac{|\nabla v_{\varepsilon}|^{2}}{|x|^{2}}\psi_{n}(x)dx - (\lambda -2)\int_{B_{\delta}(0)}v_{\varepsilon}\nabla v_{\varepsilon}\psi_{n}\frac{x}{|x|^{4}}dx -\int_{B_{\delta}(0)}|x|^{-2}\nabla \psi_{n}v_{\varepsilon}\nabla v_{\varepsilon}dx $$
which is equivalent with 
\begin{align}\label{127}
    \int_{B_{\delta}(0)}\frac{|\nabla v_{\varepsilon}|^{2}}{|x|^{2}}\psi_{n}(x)dx = - (\lambda -2)\int_{B_{\delta}(0)}v_{\varepsilon}\nabla v_{\varepsilon}\psi_{n}\frac{x}{|x|^{4}}dx -\int_{B_{\delta}(0)}|x|^{-2}\nabla \psi_{n}v_{\varepsilon}\nabla v_{\varepsilon}dx
\end{align}
We take each term and using the definition of function $ \psi_{n}$ we get 
\begin{align*}
  \int_{B_{\delta}(0)}v_{\varepsilon}\nabla v_{\varepsilon}\psi_{n}\frac{x}{|x|^{4}}dx &= \int_{B_{\frac{2}{n}}(0)\setminus B_{\frac{1}{n}}(0)} v_{\varepsilon}\nabla v_{\varepsilon}\psi_{n}\frac{x}{|x|^{4}}dx+\int_{B_{\frac{\gamma}{4}}(0)\setminus B_{\frac{2}{n}}(0)} v_{\varepsilon}\nabla v_{\varepsilon}\frac{x}{|x|^{4}}dx \\
  &+ \int_{B_{\frac{\gamma}{4}}(0)\setminus B_{\frac{\gamma}{2}}(0)} v_{\varepsilon}\nabla v_{\varepsilon}\psi_{n}\frac{x}{|x|^{4}}dx
\end{align*}
We analyze each term and we have 
\begin{align*}
    \int_{B_{\frac{2}{n}}(0)\setminus B_{\frac{1}{n}}(0)} v_{\varepsilon}\nabla v_{\varepsilon}\psi_{n}\frac{x}{|x|^{4}}dx &=\frac{1}{2}\int_{\partial B_{\frac{2}{n}}(0)}\frac{v_{\varepsilon}^{2}\psi_{n}}{|x|^{3}}dx + \frac{1}{2}\int_{\partial B_{\frac{1}{n}}(0)}\frac{v_{\varepsilon}^{2}\psi_{n}}{|x|^{3}}dx \\
    &- \frac{1}{2}\int_{B_{\frac{2}{n}}(0)\setminus B_{\frac{1}{n}}(0)} v_{\varepsilon}^{2}\nabla \left(\psi_{n}\frac{x}{|x|^{4}}\right)dx 
\end{align*}
Keeping into account that $ v_{\varepsilon} \in L^{\infty}(B_{\delta}(0)) $ and $ 0 \leq  \psi_{n}\leq 1$ we have that 
\begin{align}\label{eq105}
 \left| \int_{\partial B_{\frac{2}{n}}(0)}\frac{v_{\varepsilon}^{2}\psi_{n}}{|x|^{3}}dx\right| \leq \|v_{\varepsilon}\|^{2}_{L^{\infty}(\partial B_{\frac{2}{n}}(0))}\left(\frac{2}{n}\right)^{N-4}  
\end{align} 
Similarly, we obtain that 
\begin{align}\label{eq106}
    \left|\int_{\partial B_{\frac{1}{n}}(0)}\frac{v_{\varepsilon}^{2}\psi_{n}}{|x|^{3}}dx\right| \leq \|v_{\varepsilon}\|^{2}_{L^{\infty}(\partial B_{\frac{1}{n}}(0))}\left(\frac{1}{n}\right)^{N-4}
\end{align}
We observe that 
\begin{align*}
    \int_{B_{\frac{2}{n}}(0)\setminus B_{\frac{1}{n}}(0)} v_{\varepsilon}^{2}\nabla \left(\psi_{n}\frac{x}{|x|^{4}}\right)dx = \int_{B_{\frac{2}{n}}(0)\setminus B_{\frac{1}{n}}(0)}v_{\varepsilon}^{2}\frac{x\nabla \psi_{n}}{|x|^{4}}dx+ (N-2) \int_{B_{\frac{2}{n}}(0)\setminus B_{\frac{1}{n}}(0)}\frac{v_{\varepsilon}^{2}\psi_{n}}{|x|^{4}}dx
\end{align*}
We compute and for $ N \geq 4$ we get that
\begin{align}\label{eq107}
  \left|\int_{B_{\frac{2}{n}}(0)\setminus B_{\frac{1}{n}}(0)}v_{\varepsilon}^{2}\frac{x\nabla \psi_{n}}{|x|^{4}}dx\right|\leq n\|v_{\varepsilon}\|^{2}_{L^{\infty}(B_{\delta}(0))}\int_{B_{\frac{2}{n}}(0)\setminus B_{\frac{1}{n}}(0)}\frac{1}{|x|^{3}}dx \leq  \|v_{\varepsilon}\|^{2}_{L^{\infty}(B_{\delta}(0))}\left(\frac{2^{N-3}-1}{n^{N-3}}\right)
\end{align}

Also for $ N \geq 5$ we have 
\begin{align}\label{eq108}
   \left| \int_{B_{\frac{2}{n}}(0)\setminus B_{\frac{1}{n}}(0)}v_{\varepsilon}^{2}\frac{ \psi_{n}}{|x|^{4}}dx\right| \leq \|v_{\varepsilon}\|^{2}_{L^{\infty}(B_{\delta}(0))}\left(\frac{2^{N-4}-1}{n^{N-4}}\right)
\end{align}

Inequalities \eqref{eq105},\eqref{eq106},\eqref{eq107} and \eqref{eq108}
 lead to 
\begin{align*}
   \left|\int_{B_{\frac{2}{n}}(0)\setminus B_{\frac{1}{n}}(0)}v_{\varepsilon}\nabla v_{\varepsilon}\psi_{n}\frac{x}{|x|^{4}}dx\right| \leq C_{1}(\varepsilon,N)\left(\frac{1}{n^{N-4}}+\frac{1}{n^{N-3}}\right), 
\end{align*}
where $ C_{1}(\varepsilon,N)>0$ and $ N \geq 5$. 

In the same way we can control the quantities $$\left|\int_{B_{\frac{\delta}{4}}(0)\setminus B_{\frac{2}{n}}(0)} v_{\varepsilon}\nabla v_{\varepsilon}\frac{x}{|x|^{4}}dx\right| \quad \textrm{and}\quad\left|\int_{B_{\frac{\delta}{2}}(0)\setminus B_{\frac{\delta}{4}}(0)} v_{\varepsilon}\nabla v_{\varepsilon}\psi_{n}\frac{x}{|x|^{4}}dx\right|.$$ 
Putting all together we deduce that for $ N \geq 5$ there exists $ \tilde C_{1}(\varepsilon,N)>0$ such that 
$$ \left|\int_{B_{\delta}(0)}v_{\varepsilon}\nabla v_{\varepsilon}\psi_{n}\frac{x}{|x|^{4}}dx\right| \leq\tilde C_{1}(\varepsilon,N) \left(\frac{1}{n^{N-4}} +\frac{1}{n^{N-3}}+ \delta^{N-4}+ \delta^{N-3}\right) $$

On the other hand, we estimate the last term 
$$ \int_{B_{\delta}(0)}|x|^{-2}\nabla \psi_{n}v_{\varepsilon}\nabla v_{\varepsilon}dx= \int_{B_{\frac{2}{n}}(0)\setminus B_{\frac{1}{n}}(0)} |x|^{-2}\nabla \psi_{n}v_{\varepsilon}\nabla v_{\varepsilon}dx + \int_{B_{\frac{\delta}{2}}(0)\setminus B_{\frac{\delta}{4}}(0)} |x|^{-2}\nabla \psi_{n}v_{\varepsilon}\nabla v_{\varepsilon}dx  $$

Using the integrating by parts we get that 
\begin{align*}
   \int_{B_{\frac{2}{n}}(0)\setminus B_{\frac{1}{n}}(0)} |x|^{-2}\nabla \psi_{n}v_{\varepsilon}\nabla v_{\varepsilon}dx
   & =-\frac{1}{2}\int_{B_{\frac{2}{n}}(0)\setminus B_{\frac{1}{n}}(0)} v_{\varepsilon}^{2}\mathrm{div}(|x|^{-2}\nabla \psi_{n})dx +\frac{1}{2}\int_{\partial B_{\frac{2}{n}}(0)} v_{\varepsilon}^{2} |x|^{-2}\nabla \psi_{n}dx \\
   &+\frac{1}{2} \int_{\partial B_{\frac{1}{n}}(0)} v_{\varepsilon}^{2} |x|^{-2}\nabla \psi_{n}dx   
\end{align*}
We estimate each integral and we have 
\begin{align*}
  \left | \int_{\partial B_{\frac{2}{n}}(0)} v_{\varepsilon}^{2} |x|^{-2}\nabla \psi_{n}dx\right| \leq \omega_{N}2^{N-3}\frac{\|v_{\varepsilon}\|^{2}_{L^{\infty}(B_{\delta}(0))}}{n^{N-4}}
\end{align*}
Also we get 
\begin{align*}
    \left|\int_{\partial B_{\frac{1}{n}}(0)} v_{\varepsilon}^{2} |x|^{-2}\nabla \psi_{n}dx\right| \leq \omega_{N}\frac{\|v_{\varepsilon}\|^{2}_{L^{\infty}(B_{\delta}(0))}}{n^{N-4}}
\end{align*}
By computation we reach that for $ N \geq 5$
\begin{align*}
    \left|\int_{B_{\frac{2}{n}}(0)\setminus B_{\frac{1}{n}}(0)} v_{\varepsilon}^{2}\mathrm{div}(|x|^{-2}\nabla \psi_{n})dx\right|& \leq \int_{B_{\frac{2}{n}}(0)\setminus B_{\frac{1}{n}}(0)}|v_{\varepsilon}^{2}|x|^{-2}\Delta \psi_{n}|dx + \int_{B_{\frac{2}{n}}(0)\setminus B_{\frac{1}{n}}(0)} |v_{\varepsilon}^{2}\nabla (|x|^{-2})\nabla \psi_{n}(x)|dx \\
    &\leq \|v_{\varepsilon}\|_{L^{\infty}(B_{\delta}(0))}^{2}n^{2}\int_{B_{\frac{2}{n}}(0)\setminus B_{\frac{1}{n}}(0)}|x|^{-2}dx + \\
    &+2\|v_{\varepsilon}\|_{L^{\infty}(B_{\delta}(0))}^{2}n\int_{B_{\frac{2}{n}}(0)\setminus B_{\frac{1}{n}}(0)}|x|^{-3}dx \\
    & \leq \|v_{\varepsilon}\|_{L^{\infty}(B_{\delta}(0))}^{2} \left(\frac{2^{N-2}-1}{n^{N-4}} + \frac{2^{N-2}-2}{n^{N-4}}\right)
\end{align*}
Collecting all estimates we obtain that 
$$\left|\int_{B_{\frac{2}{n}}(0)\setminus B_{\frac{1}{n}}(0)} |x|^{-2}\nabla \psi_{n}v_{\varepsilon}\nabla v_{\varepsilon}dx\right| \leq \frac{C_{2}(\varepsilon,N)}{n^{N-4}},$$
where $ C_{2}(\varepsilon,N)>0$. 

Making the same argument we have 
$$\left|\int_{B_{\frac{\delta}{2}}(0)\setminus B_{\frac{\delta}{4}}(0)} |x|^{-2}\nabla \psi_{n}v_{\varepsilon}\nabla v_{\varepsilon}dx\right| \leq C_{2}(\varepsilon,N)\delta^{N-4}.$$

Coupling all inequalities we get that 
$$\left |\int_{B_{\delta}(0)}\frac{|\nabla v_{\varepsilon}|^{2}}{|x|^{2}}\psi_{n}(x)dx\right| \leq C(\varepsilon,N,\lambda)\left(\frac{1}{n^{N-4}}+ \frac{1}{n^{N-3}}+ \delta^{N-4}+ \delta ^{N-3}\right).$$
Applying the Fatou lemma and keeping into account that $ N \geq 5$ we reach that 
\begin{align}\label{eq116}
    \int_{B_{\delta}(0)}\frac{|\nabla v_{\varepsilon}|^{2}}{|x|^{2}}dx \leq \lim_{n\rightarrow \infty} \inf \int_{B(0,\delta)} \frac{|\nabla v_{\varepsilon}|^{2}}{|x|^{2}}\psi_{n}dx \leq C(\varepsilon,N,\lambda)(\delta ^{N-4}+\delta ^{N-3})< \infty.
\end{align}
We know that 
\begin{align}
  \left\{\begin{array}{ll} 
-\Delta v_{\varepsilon}=  f_{\varepsilon}-\lambda \frac{x\nabla v_{\varepsilon}}{|x|^{2}}  &		\textrm{ in} \quad   B_{\delta}(0)\\
v_{\varepsilon} \in H^{1}(B_{\delta}(0)).
 	\end{array}\right.
\end{align}
By virtute of \eqref{eq116} and $ f_{\varepsilon} \in L^{2}(B_{\delta}(0)) $ we deduce that $ f_{\varepsilon} - \lambda\frac{x\nabla v_{\varepsilon}}{|x|^{2}} \in L^{2}(B_{\delta}(0))$ and by standard elliptic regularity given by Calderon Zygmund we have $ v_{\varepsilon} \in H^{2}(B_{\delta}(0))$ for any $ \varepsilon >0$.  

It is well-known that $ v_{\varepsilon} \in H^{2}(\Omega \setminus B_{\delta}(0))$ and therefore it is clear that $ v_{\varepsilon} \in H^{2}(\Omega)$.

\textbf{Step 5} We prove that $ v_{\varepsilon} \rightarrow v$ in $ H_{0}^{1}(\Omega)$

Indeed, we have by linearity of the problem that
$$ \int_{\Omega} \nabla (v_{\varepsilon}-v)\nabla\phi + \lambda \frac{x}{|x|^{2}}\nabla(v_{\varepsilon}-v)\phi dx= \int_{\Omega} (f_{\varepsilon}-f)\phi dx,$$
for any $ \phi \in H^{1}_{0}(\Omega)$.

Taking $ \phi = v_{\varepsilon}-v \in H^{1}_{0}(\Omega)$ and using integration by parts, Hardy inequality and Cauchy-Schwartz inequality there exists $ C(\lambda, N)$ such that 
$$\|\nabla(v_{\varepsilon}-v)\|_{L^{2}(\Omega)}^{2}dx \leq C(\lambda,N)\|f_{\varepsilon}-f\|_{L^{2}(\Omega)}\|v_{\varepsilon}-v\|_{L^{2}(\Omega)}$$

Since $ \Omega$ is bounded,  Poincar\'e inequality yields 
$$ \|(v_{\varepsilon}-v)\|_{H^{1}_{0}(\Omega)}\leq C(\lambda,N)\|f_{\varepsilon}-f\|_{L^{2}(\Omega)}.$$
This means that $$ \partial_{xi}v_{\varepsilon} \rightarrow \partial_{x_{i}}v$$ in $ L^{2}(\Omega)$ when $ \varepsilon \rightarrow 0$ because $ f_{\varepsilon} \rightarrow f$.

In fact we know that $ \partial_{x_{i}}v_{\varepsilon} \rightarrow \partial_{x_{i}}v$ a.e in $ \Omega$ up to a subsequence. 

\textbf{Step 6} We show that $v \in H^{2}(\Omega)$. 

By density we can extend the inequality \eqref{HR_lambda} to $ H_{0}^{2}(\Omega)$ when $ N \geq 5$. 
In fact we can't apply directly this result for $ v_{\varepsilon}$ since it does not belong in $ H_{0}^{2}(\Omega)$. 

We will use a cut-off argument. We take a function $ \eta \in C^{\infty}_{c}(\Omega)$, where $ 0 \leq \eta\leq 1$ such that 
\begin{align}\label{eq2b}
 \eta(x):=  \left\{\begin{array}{ll} 
1,  &		\textrm{ for} \quad  |x|< \frac{1}{2}\\
	0, &  \textrm{ for }  |x|\geq 1,\\ 
	\end{array}\right.
\end{align} 

We evaluate 
\begin{align*}
\int_{\Omega}\frac{|x\nabla v_{\varepsilon}|^{2}}{|x|^{4}}dx \leq \int_{\Omega} \frac{|x \nabla (v_{\varepsilon}\eta)|^{2}}{|x|^{4}}dx + \int_{\Omega \setminus B_{\frac{1}{2}}(0)}\frac{|x \nabla(v_{\varepsilon}(1-\eta))|^{2}}{|x|^{4}}dx  
\end{align*}

On the other hand, $ v_{\varepsilon} \eta \in H^{2}_{0}(\Omega)$ and we know that for $\lambda \in \rr\setminus\left\{\lambda_n:=\frac{(2n+N-2)^2-4} {2(N-4)} \  | n \in \nn^\star \right\} \setminus \{\frac{N}{2}\}$ we can apply \eqref{HR_lambda}  which holds for every $ \lambda < \frac{N-2}{2} $. 

Hardy-Rellich inequality \eqref{HR_ineq1}  gives us that 
\begin{align*}
    \int_{\Omega}\frac{|x\nabla(v_{\varepsilon}\eta)|^{2}}{|x|^{4}}dx \leq C(\lambda, N) \int_{\Omega}\left|-\Delta (v_{\varepsilon}\eta) + \lambda \frac{x\nabla (v_{\varepsilon}\eta)}{|x|^{2}}\right|^{2}dx.
\end{align*}
Putting $ v_{\varepsilon}\eta = v_{\varepsilon} + v_{\varepsilon}(\eta -1) $  there exists $ C>0$ such that 
\begin{align}\label{eq133}
    C \int_{\Omega}\left|-\Delta (v_{\varepsilon}\eta) + \lambda \frac{x\nabla (v_{\varepsilon}\eta)}{|x|^{2}}\right|^{2}dx &\leq \int_{\Omega}\left|- \Delta v_{\varepsilon} + \lambda \frac{x\nabla v_{\varepsilon}}{|x|^{2}}\right|^{2}  dx +\\ \nonumber 
     & + \int_{\Omega \setminus B_{\frac{1}{2}(0)}}\left|- \Delta [v_{\varepsilon}(\eta-1)] + \lambda \frac{x\nabla (v_{\varepsilon}(\eta -1))}{|x|^{2}} \right|^{2}dx \\ \nonumber  
     & \leq \int_{\Omega}\left|- \Delta v_{\varepsilon} + \lambda \frac{x\nabla v_{\varepsilon}}{|x|^{2}}\right|^{2}  dx + \int_{\Omega \setminus B_{\frac{1}{2}(0)} }\left|- \Delta v_{\varepsilon} + \lambda \frac{x\nabla v_{\varepsilon}}{|x|^{2}}\right|^{2}(\eta -1)^{2}dx \\ \nonumber 
     &+\int_{\Omega  \setminus B_{\frac{1}{2}}(0)}\left|-2\nabla v_{\varepsilon}\nabla \eta - \Delta \eta v_{\varepsilon}  + \lambda \frac{x\nabla{\eta}}{|x|^{2}}v_{\varepsilon}\right|^{2}dx
\end{align}

Using Hardy inequality and Cauchy-Schwartz inequality we can estimate the last term of the previous inequality and we get that 

\begin{align*}
    \int_{\Omega \setminus B_{\frac{1}{2}}(0)} \left|-2\nabla v_{\varepsilon}\nabla \eta - \Delta \eta v_{\varepsilon}  + \lambda \frac{x\nabla{\eta}}{|x|^{2}}v_{\varepsilon}\right|^{2}dx &\leq 4\|\nabla v_{\varepsilon}\|_{L^{2}(\Omega \setminus B_{\frac{1}{2}(0)})}\|\nabla \eta\|_{L^{\infty}(\Omega)}+ \|\Delta \eta\|_{L^{\infty}(\Omega)}\|v_{\varepsilon}\|_{L^{2}(\Omega \setminus B_\frac{1}{2})}+ \\
    &+\lambda^{2}\left\|\frac{v_{\varepsilon}}{|x|}\right\|_{L^{2}(\Omega \setminus B_{\frac{1}{2}})}\|\nabla \eta\|_{L^{\infty}(\Omega)}
\end{align*}  
As in \textbf{step 1} we argue that $ \|v_{\varepsilon}\|_{L^{2}(\Omega \setminus B_{\frac{1}{2}}(0))} \leq \|f\|_{L^{2}(\Omega)}$, $ \|\nabla v_{\varepsilon}\|_{L^{2}(\Omega \setminus B_{\frac{1}{2}(0)})} \leq \|f\|_{L^{2}(\Omega )}$ and $ \frac{1}{|x|^{2}} \leq 4$ for $ x \in \Omega \setminus B_{\frac{1}{2}}$ we have that  
\begin{align*}
    \int_{\Omega \setminus B_{\frac{1}{2}}(0)} \left|-2\nabla v_{\varepsilon}\nabla \eta - \Delta \eta v_{\varepsilon}  + \lambda \frac{x\nabla{\eta}}{|x|^{2}}v_{\varepsilon}\right|^{2}dx &\leq 4\|f\|_{L^{2}(\Omega )}\|\nabla \eta\|_{L^{\infty}(\Omega)}+ \|\Delta \eta\|_{L^{\infty}(\Omega)}\|f\|_{L^{2}(\Omega )}+ \\
    &+4\lambda^{2}\|f \|_{L^{2}(\Omega)}\|\nabla \eta\|_{L^{\infty}(\Omega)}.
\end{align*} 

Since $ - \Delta v_{\varepsilon} + \lambda \frac{x\nabla v_{\varepsilon}}{|x|^{2}} = f_{\varepsilon} $ the inequality  \eqref{eq133} becomes 
$$\int_{\Omega}\left|-\Delta (v_{\varepsilon}\eta) + \lambda \frac{x\nabla (v_{\varepsilon}\eta)}{|x|^{2}}\right|^{2}dx \leq C(\lambda^{2}, |\nabla  \eta|_{L^{\infty}(\Omega)}, |\Delta \eta|_{L^{\infty}(\Omega)},N)\|f\|_{L^{2}(\Omega)},$$ where $C(\lambda^{2}, |\nabla  \eta|, |\Delta \eta|, N)>0$. 

By a direct computation we have that 
\begin{align*}
    \int_{\Omega \setminus B_{\frac{1}{2}}(0)}\frac{|x \nabla (v_{\varepsilon}(1-\eta))|^{2}}{|x|^{4}}dx&\leq \|(1-\eta)^{2}\|_{L^{\infty}(\Omega)}\int_{\Omega \setminus B_{\frac{1}{2}(0)}}\frac{|\nabla v_{\varepsilon}|^{2}}{|x|^{2}}dx + \||\nabla \eta|^{2}\|_{L^{\infty}(\Omega)}\int_{\Omega \setminus B_{\frac{1}{2}(0)}}\frac{v_{\varepsilon}^{2}}{|x|^{2}}dx \\
    & \leq (4\|(1-\eta)^{2}\|_{L^{\infty}(\Omega)}+ 4\||\nabla \eta|^{2}\|_{L^{\infty}(\Omega)})\|f\|_{L^{2}(\Omega)}
\end{align*}

Putting all together we can estimate 
\begin{align*}
    \int_{\Omega}\frac{|x\nabla v_{\varepsilon}|^{2}}{|x|^{4}}dx &\leq \int_{\Omega}\frac{|x\nabla (v_{\varepsilon}\eta)|^{2}}{|x|^{4}}dx + \int_{\Omega \setminus B_{\frac{1}{2}(0)}}\frac{|x\nabla(v_{\varepsilon}(1-\eta))|^{2}}{|x|^{4}}dx \nonumber \\
    & \leq \tilde{C}(\lambda, N) \int_{\Omega}|f|^{2}dx,
\end{align*}
where $ \tilde{C}(\lambda, N)>0 $. 

We proved that the integral $ \int_{\Omega}\frac{|x\nabla v_{\varepsilon}|^{2}}{|x|^{4}}dx $ is uniformly bounded in $ \varepsilon$. 

By \textbf{step 5} Fatou lemma implies that 
$$ \int_{\Omega} \frac{|x\nabla v|^{2}}{|x|^{4}}dx \leq \lim_{\varepsilon \rightarrow 0} \inf \int_{\Omega} \frac{|x\nabla v_{\varepsilon}|^{2}}{|x|^{4}}  \leq \tilde{C}(\lambda,N) \int_{\Omega}|f|^{2}dx < \infty$$ 
This is sufficient to deduce that $ v \in H^{2}(B_{\delta}(0))$ and therefore the conclusion follows. 
\end{proof}

We formulate a version of maximum principle for Hardy-Leray operator.  
\begin{prop}\label{ Teorema 7}
    Let $ \Omega$ denote a smoothly bounded domain of $ \mathbb{R}^{N}$, $ N \ge 3$. Assume that $ v \in H^{1}( \Omega) $  satisfies, in the weak sense,
\begin{align}\label{eq120}
    \left\{\begin{array}{cc} 
-\Delta v +\lambda\frac{ v}{|x|^{2}} = 0,  &		\textrm{ in} \quad  \Omega \\
	v\ge0, &  \textrm{ on }  \partial \Omega,\\ 
	\end{array}\right.
\end{align}
where $ \lambda >  -\frac{(N-2)^{2}}{4}$. Then $ v \ge 0$ in $ \Omega$. 
\end{prop}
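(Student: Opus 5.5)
We argue exactly as in the proof of Proposition \ref{ Teorema 2}, testing the equation with the negative part of $v$. Since $v\in H^1(\Omega)$ and $v\ge 0$ on $\partial\Omega$ in the trace sense, the function $v^-=\max\{-v,0\}$ defined in \eqref{eq100} belongs to $H^1(\Omega)$ and has zero trace, so $v^-\in H_0^1(\Omega)$. It is therefore an admissible test function in the weak formulation
\begin{equation*}
\int_\Omega \nabla v\cdot\nabla\phi+\lambda\frac{v\phi}{|x|^2}\,dx=0 .
\end{equation*}
The singular term is finite: by the Hardy inequality \eqref{HI}, $v^-/|x|\in L^2(\Omega)$. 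Strictly speaking, the hypothesis "$v$ is a weak solution" already presupposes that $\int_\Omega v\phi/|x|^2$ makes sense for $\phi\in H_0^1(\Omega)$. If needed, I would make this explicit by assuming $v/|x|\in L^2_{loc}$, which is automatic for the functions to which the proposition is applied later.

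Next we use the pointwise identities $\nabla v\cdot\nabla v^-=-|\nabla v^-|^2$ and $v\,v^-=-(v^-)^2$ a.e. (Stampacchia). Substituting $\phi=v^-$ then gives
\begin{equation*}
\int_\Omega |\nabla v^-|^2\,dx+\lambda\int_\Omega\frac{(v^-)^2}{|x|^2}\,dx=0 .
\end{equation*}

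If $\lambda\ge 0$, both terms are nonnegative, so $\nabla v^-=0$. If $-\frac{(N-2)^2}{4}<\lambda<0$, we apply \eqref{HI} to $v^-\in H_0^1(\Omega)$ and obtain
\begin{equation*}
0\ge\Big(1+\frac{4\lambda}{(N-2)^2}\Big)\int_\Omega|\nabla v^-|^2\,dx .
\end{equation*}
The coefficient is strictly positive precisely because $\lambda>-\frac{(N-2)^2}{4}$, so again $\nabla v^-=0$. In either case $v^-\in H_0^1(\Omega)$ with vanishing gradient, and the Poincaré inequality gives $v^-=0$, that is, $v\ge 0$ in $\Omega$.

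The only delicate point is justifying the test function: we need $v^-\in H_0^1(\Omega)$ and the integrability of $v v^-/|x|^2$. Both follow from the trace condition and the Hardy inequality, so I do not expect any real obstacle. Note also that the threshold $\lambda>-\frac{(N-2)^2}{4}$ enters only through the coercivity constant above.
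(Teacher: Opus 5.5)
Your proof is correct and follows the paper's argument. You test the weak formulation with $v^-\in H_0^1(\Omega)$ to get $\int_\Omega|\nabla v^-|^2+\lambda\int_\Omega (v^-)^2/|x|^2=0$ and conclude with the Hardy inequality. The only difference is cosmetic: the paper avoids your case split on the sign of $\lambda$ by bounding the left side below directly by $\bigl(\frac{(N-2)^2}{4}+\lambda\bigr)\int_\Omega (v^-)^2/|x|^2$, which gives $v^-=0$ without invoking Poincar\'e.
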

\begin{proof}[Proof of Proposition \eqref{ Teorema 7}]

Let $ v$ denote a solution to \eqref{eq120}. We know that 
\begin{align}\label{eq118}
    \int_{\Omega} \left[\nabla v \cdot \nabla \phi + \lambda \frac{ v}{|x|^{2}}\phi\right]dx = 0,
\end{align}
for any $ \phi \in H_{0}^{1}(\Omega)$.

We set 
\begin{align}\label{eq119}
 v^{-}(x)= \left\{\begin{array}{ll} 
-v,  &		v\leq 0\\
	0, &  v\geq 0.\\ 
	\end{array}\right.
\end{align}
It is known that $ v^{-} \in H^{1}(\Omega)$ when $ v \in H^{1}(\Omega)$. Since $ v \geq 0$ on $ \partial \Omega$ we have $ v^{-} = 0$ on $ \partial \Omega$. 
We can set $ \phi:=v^{-} \in H_{0}^{1}(\Omega)$. Keeping into account that $ v = v^{+}-v^{-}$ the equality \eqref{eq118} becomes 
$$ - \int_{\Omega}|\nabla v^{-}|^{2}dx - \lambda\int_{\Omega}\frac{(v^{-})^{2}}{|x|^{2}}dx =  0.$$
Hardy inequality implies that 
$$ 0= \int_{\Omega}|\nabla v^{-}|^{2}dx + \lambda  \int_{\Omega}\frac{(v^{-})^{2}}{|x|^{2}} \geq \left[\frac{(N-2)^{2}}{4} + \lambda\right]\int_{\Omega} \frac{(v^{-})^{2}}{|x|^{2}} dx $$
Since $\lambda >-\frac{(N-2)^{2}}{4} $ then  we get $ \frac{(v^{-})^{2}}{|x|^{2}}=0$ and we have $ v^{-}=0 $. This means that $ v \geq 0$ in $ \Omega$. 

\end{proof}

\begin{proof}[Proof of Theorem \eqref{Teorema 2}]
We divide the proof into five steps.

\textbf{Step 1} We show that $$ \|v\|_{H^{1}_{0}(\Omega)} \leq C \|f\|_{L^{2}(\Omega)}.$$
Indeed, we take as test function $ \phi = v \in H^{1}_{0}(\Omega)$ in \eqref{def2}.

Hence we get 
$$ \int_{\Omega}|\nabla v|^{2}dx + \lambda \int_{\Omega} \frac{ v^{2}}{\|x\|^{2}}  dx = \int_{\Omega}fv dx .$$

Applying Hardy inequality, Cauchy-Schwartz inequality and Poincar\'e inequality we obtain that there exists $ C(\lambda,N)>0$ such that  
\begin{align*}
    \|\nabla v\|^{2}_{L^{2}({\Omega})} &\leq \|f\|_{L^{2}(\Omega)}\|v\|_{L^{2}(\Omega)} \\
    & \leq C(\lambda,N) \|f\|_{L^{2}(\Omega)}\|\nabla v\|_{L^{2}(\Omega)}.
\end{align*}
Therefore we have that   $\|v\|_{H^{1}_{0}(\Omega)} \leq C(\lambda,N) \|f\|_{L^{2}(\Omega)}$.

\textbf{Step 2} We want to construct an approximation Dirichlet problem for our case. We take a cut-off function $ \theta \in C^{\infty}([0,\infty))$, where $ 0 \leq \theta\leq 1$ such that 
\begin{align}\label{eq2b2}
 \theta(x):=  \left\{\begin{array}{ll} 
0,  &		\textrm{ for} \quad  x< 1\\
	1, &  \textrm{ for }  x\geq 2,\\ 
	\end{array}\right.
\end{align}

We define $ \theta_{\varepsilon}(x)= \theta\left(\frac{|x|}{\varepsilon}\right)$.  
Hence we have that $ \theta_{\varepsilon}(x) = 0 $ in $ B_{\varepsilon}(0)$ and $ \theta_{\varepsilon}(x) = 1$ in $ \Omega \setminus B_{2\varepsilon}(0)$. It is well known that $ \theta_{\varepsilon} \rightarrow 1$ a.e and also in $ L^{2}(\Omega)$-norm when $ \varepsilon \rightarrow 0$. 

We set $ f_{\varepsilon}:= f\theta_{\varepsilon} \in L^{2}(\Omega)$ and we take the following Dirichlet problem 
\begin{align}\label{eq121}
  \left\{\begin{array}{ll} 
-\Delta v_{\varepsilon} + \lambda \frac{ v_{\varepsilon}}{|x|^{2}}= f_{\varepsilon},  &		\textrm{ in} \quad \Omega\\
v_{\varepsilon}=	0, &  \textrm{ on } \quad \partial \Omega,\\ 
	\end{array}\right.
\end{align}

By Lax-Milgram theorem we know that \eqref{eq121} admits a unique solution $ v_{\varepsilon} \in H^{1}_{0}(\Omega)$ for $ \lambda >- \frac{N(N-4)}{4}$ and we have 
\begin{align}\label{eq122}
  \left\{\begin{array}{ll} 
\int_{\Omega}\nabla v_{\varepsilon}\cdot\nabla \phi + \lambda \frac{ v_{\varepsilon}}{|x|^{2}}\phi dx = \int_{\Omega}f_{\varepsilon}\phi dx,  \quad\forall \phi \in H^{1}_{0}(\Omega) \\
v_{\varepsilon} \in H_{0}^{1}(\Omega)	\\ 
	\end{array}\right.
\end{align}

Since $ f_{\varepsilon} =0 $ in $ B_{\varepsilon}(0)$ we have that $$ \int_{B_{\varepsilon}(0)}\nabla v_{\varepsilon}\cdot\nabla \phi + \lambda \frac{ v_{\varepsilon}}{|x|^{2}}\phi dx =0 , $$
for any $ \phi \in H^{1}_{0}(B_{\varepsilon}(0))$. 

We restrict our attention to the region $ B_{\varepsilon}(0)\setminus \{0\}$ and we consider $ \phi \in C^{\infty}_{c}((B_{\varepsilon}(0)\setminus\{0\}))$. In this case the potential $ \frac{1}{|x|^{2}}$ is bounded in any compact included in $ B_{\varepsilon}(0) \setminus \{0\}$. 

In fact, by standard elliptic regularity we have that $ v_{\varepsilon} \in C^{\infty}(B_{\varepsilon}(0)\setminus\{0\})$. Indeed, we could consider $0< \varepsilon_{0}< \varepsilon$. 

Thus we get 
\begin{align}\label{190}
  \left\{\begin{array}{ll} 
\int_{B_{\varepsilon}(0)}\nabla v_{\varepsilon}\cdot\nabla \phi + \lambda \frac{ v_{\varepsilon}}{|x|^{2}}\phi dx =0 ,  \quad\forall \phi \in H^{1}_{0}(B_{\varepsilon}(0)\setminus B_{\varepsilon_{0}}(0)) \\
v_{\varepsilon} \in H^{1}(B_{\varepsilon}(0)\setminus B_{\varepsilon_{0}}(0))	\\ 
	\end{array}\right.
\end{align}

Since $ \frac{1}{|x|} \in C^{\infty}(B_{\varepsilon}(0)\setminus B_{\varepsilon_{0}}(0))$ we can apply \cite[Ch. 6, Th. 3]{Charro} and we obtain that $ v_{\varepsilon} \in C^{\infty}(B_{\varepsilon}(0)\setminus B_{\varepsilon_{0}}(0))$, but $ \varepsilon_{0}$ was chosen arbitrary and therefore we can conclude that $ v_{\varepsilon} \in C^{\infty}(B_{\varepsilon}(0)\setminus\{0\})$. 

Taking $ \phi \in C^{\infty}_{c}(B_{\varepsilon}(0)\setminus{0})$ and integrating by parts \eqref{190}  we obtain that $$ \int_{B(0,\varepsilon)\setminus\{0\}}\left(-\Delta v_{\varepsilon} + \lambda \frac{ v_{\varepsilon}}{|x|^{2}}\right)\phi(x)dx =0.$$
Since $ -\Delta v_{\varepsilon} + \lambda \frac{ v_{\varepsilon}}{|x|^{2}} \in L^{1}_{loc}(B_{\varepsilon}(0)\setminus{0})$ we can apply the fundamental lemma of the calculus of variations and hence we get that $$-\Delta v_{\varepsilon} + \lambda \frac{ v_{\varepsilon}}{|x|^{2}} =0,  $$ where $ v_{\varepsilon} \in C^{\infty}(B_{\varepsilon}(0)\setminus \{0\})$. 

\textbf{Step 3} We prove that $ v_{\varepsilon} \in H^{2}(B_{\delta}(0))$. 

By \textbf{step 2} we know that $ v_{\varepsilon} \in C^{\infty}(B_{\varepsilon}(0)\setminus\{0\})$ and we have that $ v_{\varepsilon}$ is bounded on $ \partial B(0,\delta)$ for any $ \delta < \varepsilon$.

By \eqref{ Teorema 8} we know that $ u_{\lambda}(x)= |x|^{-\frac{N-2}{2}+ \sqrt{ \lambda - \frac{(N-2)^{2}}{4}}} \in H^{1}(B_{\varepsilon}(0))$ is a radial solution to the equation \eqref{190}. We choose a constant $ C>0 $  which depends of $ \lambda,\varepsilon,\delta$ and N such that  $$ C \delta ^{-\frac{N-2}{2}+ \sqrt{ \lambda - \frac{(N-2)^{2}}{4}}} \geq \|v_{\varepsilon}\|_{L^{\infty}(\partial B_{\varepsilon}(0))}$$

Therefore, we get 
\begin{align}
    \left\{\begin{array}{cc} 
-\Delta (Cu_{\lambda} - v_{\varepsilon}) +\lambda\frac{ (C u_{\lambda} - v_{\varepsilon})}{|x|^{2}} = 0,  &		\textrm{ in} \quad  B_{\delta}(0) \\
	Cu_{\lambda}-v_{\varepsilon}\ge0, &  \textrm{ on }  \partial B_{\delta}(0),\\ 
	\end{array}\right.
\end{align}
Hence, using the proposition \ref{ Teorema 7} we obtain that $$ v_{\varepsilon}(x) \leq C u_{\lambda}(x),$$ for any $ x \in B_{\delta}(0)$ and $ \lambda \in (- \frac{(N-2)^{2}}{4},\infty) $. 

A direct computation shows that 
\begin{align}
    \int_{B_{\delta}(0)}\frac{v_{\varepsilon}^{2}}{|x|^{4}}dx \leq C\int_{B_{\delta}(0)}\frac{|x|^{-N+2+ 2\sqrt{ \lambda - \frac{(N-2)^{2}}{4}}}}{|x|^{4}}dx \leq C \int_{0}^{\delta}r^{-3+2\sqrt{\lambda - \frac{(N-2)^{2}}{4}}}dr.
\end{align}

This means that for $ \lambda >-\frac{N(N-4)}{4}$ we have 
\begin{align}\label{eq125}
  \int_{B_{\delta}(0)}\frac{v_{\varepsilon}^{2}}{|x|^{4}}dx <\infty  
\end{align}
We know that 
\begin{align}
  \left\{\begin{array}{ll} 
-\Delta v_{\varepsilon}=  f_{\varepsilon}-\lambda \frac{ v_{\varepsilon}}{|x|^{2}}  &		\textrm{ in} \quad   B_{\delta}(0)\\
v_{\varepsilon} \in H^{1}(B_{\delta}(0))
 	\end{array}\right.
\end{align}

By virtute of \eqref{eq125}  and $ f_{\varepsilon} \in L^{2}(B_{\delta}(0)) $ we deduce that $ f_{\varepsilon} - \lambda\frac{ v_{\varepsilon}}{|x|^{2}} \in L^{2}(B_{\delta}(0))$ and by standard elliptic regularity given by Calderon Zygmund we have $ v_{\varepsilon} \in H^{2}(B_{\delta}(0))$ for any $ \varepsilon>0$. 

It is obvious that we have  $ v_{\varepsilon} \in H^{2}(\Omega \setminus B_{\delta}(0))$ and therefore we get that $ v_{\varepsilon} \in H^{2}(\Omega)$. 

\textbf{Step 4}
We prove that $ v_{\varepsilon} \rightarrow v$ in $ H_{0}^{1}(\Omega)$

Indeed, we have by linearity of the problem that
$$ \int_{\Omega} \nabla (v_{\varepsilon}-v)\nabla\phi + \lambda \frac{v_{\varepsilon}-v}{|x|^{2}}\phi dx= \int_{\Omega} (f_{\varepsilon}-f)\phi dx,$$
for any $ \phi \in H^{1}_{0}(\Omega)$.

Taking $ \phi = v_{\varepsilon}-v$ and applying Hardy inequality and Cauchy-Schwartz inequality there exists $ C(\lambda, N)$ such that 
$$|\nabla(v_{\varepsilon}-v)|_{L^{2}(\Omega)}^{2} \leq C(\lambda,N)\|f_{\varepsilon}-f\|_{L^{2}(\Omega)}\|v_{\varepsilon}-v\|_{L^{2}(\Omega)}$$

By Poincar\'e inequality we obtain that 
$$ \|(v_{\varepsilon}-v)\|_{H^{1}_{0}(\Omega)}\leq C(\lambda,N)\|f_{\varepsilon}-f\|_{L^{2}(\Omega)}.$$
This means that $$ v_{\varepsilon} \rightarrow v$$ in $ L^{2}(\Omega)$ when $ \varepsilon \rightarrow 0$ because $ f_{\varepsilon} \rightarrow f$.

In fact we know that $ v_{\varepsilon} \rightarrow v$ a.e in $ \Omega$ up to a subsequence. 

\textbf{Step 5} We show that $v \in H^{2}(\Omega)$. 

By density we can extend the inequality \eqref{Ineq_lambda} to $ H_{0}^{2}(\Omega)$ when $ N \geq 5$. 
In fact we can't apply directly this result for $ v_{\varepsilon}$ since it does not belong in $ H_{0}^{2}(\Omega)$. 

We will use a cut-off argument. We take a function $ \eta \in C^{\infty}_{c}(\Omega)$, where $ 0 \leq \eta\leq 1$ such that 
\begin{align}\label{eq2b_alt}
 \eta(x):=  \left\{\begin{array}{ll} 
1,  &		\textrm{ for} \quad  |x|< \frac{1}{2}\\
	0, &  \textrm{ for }  |x|\geq 1,\\ 
	\end{array}\right.
\end{align} 

We evaluate 
\begin{align*}
\int_{\Omega}\frac{| v_{\varepsilon}|^{2}}{|x|^{4}}dx \leq \int_{\Omega} \frac{| v_{\varepsilon}\eta|^{2}}{|x|^{4}}dx + \int_{\Omega \setminus B_{\frac{1}{2}}(0)}\frac{|(v_{\varepsilon}(1-\eta))|^{2}}{|x|^{4}}dx  
\end{align*}

On the other hand, $ v_{\varepsilon} \eta \in H^{2}_{0}(\Omega)$ and we know that for $\lambda \in (\frac{-N^{2}+2N+2}{4},\infty)\setminus\{-\frac{N(N-4)}{4}\}$ we can apply \eqref{Ineq_lambda} which gives us that 
\begin{align*}
    \int_{\Omega}\frac{|v_{\varepsilon}\eta|^{2}}{|x|^{4}}dx \leq C(\lambda, N) \int_{\Omega}\left|-\Delta (v_{\varepsilon}\eta) + \lambda \frac{v_{\varepsilon}\eta}{|x|^{2}}\right|^{2}dx
\end{align*}
Putting $ v_{\varepsilon}\eta = v_{\varepsilon} + v_{\varepsilon}(\eta -1) $ we have that there exists $ C>0$ such that
\begin{align}\label{200}
    C \int_{\Omega}\left|-\Delta (v_{\varepsilon}\eta) + \lambda \frac{v_{\varepsilon}\eta}{|x|^{2}}\right|^{2}dx &\leq \int_{\Omega}\left|- \Delta v_{\varepsilon} + \lambda \frac{ v_{\varepsilon}}{|x|^{2}}\right|^{2}  dx +\\ \nonumber 
     & + \int_{\Omega \setminus B_{\frac{1}{2}(0)}}\left|- \Delta [v_{\varepsilon}(\eta-1)] + \lambda \frac{ v_{\varepsilon}(\eta -1)}{|x|^{2}} \right|^{2}dx \\ \nonumber  
     & \leq \int_{\Omega}\left|- \Delta v_{\varepsilon} + \lambda \frac{ v_{\varepsilon}}{|x|^{2}}\right|^{2}  dx + \int_{\Omega \setminus B_{\frac{1}{2}(0)} }\left|- \Delta v_{\varepsilon} + \lambda \frac{ v_{\varepsilon}}{|x|^{2}}\right|^{2}(\eta -1)^{2}dx \\ \nonumber 
     &+\int_{\Omega  \setminus B_{\frac{1}{2}}(0)}\left|-2\nabla v_{\varepsilon}\nabla \eta - \Delta \eta v_{\varepsilon}  \right|^{2}dx
\end{align}

Using Hardy inequality and Cauchy-Schwartz inequality we can estimate the last term of the previous inequality and we get that 

\begin{align*}
    \int_{\Omega \setminus B_{\frac{1}{2}}(0)} \left|-2\nabla v_{\varepsilon}\nabla \eta - \Delta \eta v_{\varepsilon}  \right|^{2}dx &\leq 4\|\nabla v_{\varepsilon}\|_{L^{2}(\Omega \setminus B_{\frac{1}{2}(0)})}\|\nabla \eta\|_{L^{\infty}(\Omega)}+ \|\Delta \eta\|_{L^{\infty}(\Omega)}\|v_{\varepsilon}\|_{L^{2}(\Omega \setminus B_\frac{1}{2})} \\
\end{align*}  
As in \textbf{step 1} we argue that $ \|v_{\varepsilon}\|_{L^{2}(\Omega \setminus B_{\frac{1}{2}}(0))} \leq \|f\|_{L^{2}(\Omega )}$, $ \|\nabla v_{\varepsilon}\|_{L^{2}(\Omega \setminus B_{\frac{1}{2}(0)})} \leq \|f\|_{L^{2}(\Omega)}$  we have that  
\begin{align*}
    \int_{\Omega \setminus B_{\frac{1}{2}}(0)} \left|-2\nabla v_{\varepsilon}\nabla \eta - \Delta \eta v_{\varepsilon}  \right|^{2}dx &\leq 4\|f\|_{L^{2}(\Omega )}\|\nabla \eta\|_{L^{\infty}(\Omega)}+ \|\Delta \eta\|_{L^{\infty}(\Omega)}\|f\|_{L^{2}(\Omega)} \\
\end{align*} 

Since $ - \Delta v_{\varepsilon} + \lambda \frac{ v_{\varepsilon}}{|x|^{2}} = f_{\varepsilon} $ the inequality \eqref{200} leads to 
$$\int_{\Omega}\left|-\Delta (v_{\varepsilon}\eta) + \lambda \frac{ v_{\varepsilon}\eta}{|x|^{2}}\right|^{2}dx \leq C(\lambda^{2}, |\nabla  \eta|_{L^{\infty}(\Omega)}, |\Delta \eta|_{L^{\infty}(\Omega)},N)\|f\|_{L^{2}(\Omega)},$$ where $C(\lambda^{2}, |\nabla  \eta|, |\Delta \eta|, N)>0$. 

Since $ \frac{1}{|x|^{4}}\leq 16$ for $ x \in \Omega \setminus B_\frac{1}{2}(0)$ we have that 
\begin{align*}
    \int_{\Omega \setminus B_{\frac{1}{2}}(0)}\frac{|v_{\varepsilon}(1-\eta)|^{2}}{|x|^{4}}dx&\leq \|(1-\eta)^{2}\|_{L^{\infty}(\Omega)}\int_{\Omega \setminus B_{\frac{1}{2}(0)}}\frac{| v_{\varepsilon}|^{2}}{|x|^{4}}dx  \\
    & \leq 16\|(1-\eta)^{2}\|_{L^{\infty}(\Omega)}\|f\|_{L^{2}(\Omega)}
\end{align*}

Putting all together we can estimate 
\begin{align*}
    \int_{\Omega}\frac{| v_{\varepsilon}|^{2}}{|x|^{4}}dx &\leq \int_{\Omega}\frac{| v_{\varepsilon}\eta|^{2}}{|x|^{4}}dx + \int_{\Omega \setminus B_{\frac{1}{2}(0)}}\frac{|v_{\varepsilon}(1-\eta)|^{2}}{|x|^{4}}dx \nonumber \\
    & \leq \tilde{C}(\lambda, N) \int_{\Omega}|f|^{2}dx 
\end{align*}

We proved that the integral $ \int_{\Omega}\frac{|v_{\varepsilon}|^{2}}{|x|^{4}}dx $ is uniformly bounded in $ \varepsilon$. 

By \textbf{step 4} Fatou lemma implies that 
$$ \int_{\Omega} \frac{| v|^{2}}{|x|^{4}}dx \leq \lim_{\varepsilon \rightarrow 0} \inf \int_{\Omega} \frac{| v_{\varepsilon}|^{2}}{|x|^{4}}  \leq \tilde{C}(\lambda,N) \int_{\Omega}|f|^{2}dx < \infty$$ 
This is sufficient to deduce that $ v \in H^{2}(B_{\delta}(0))$ and we obtain the conclusion. 
\end{proof}

\section*{\centering Acknowledgements}

The authors would like to thank David Krejčiřík for useful discussion on Kato's theory of relatively bounded perturbations, which provided a valuable perspective in the development of this work. We are also grateful to Tom ter Elst for pointing us to the reference \cite{Metafune2016}, which was useful in the context of the present paper. We sincerely appreciate their helpful suggestions and valuable insights.


\begin{thebibliography}{30}
\bibitem{Adams}
R. A. Adams and J. J. F. Fournier,
\emph{Sobolev Spaces},
2nd ed., Pure and Applied Mathematics, vol. 140,
Elsevier/Academic Press, Amsterdam, 2003.

\bibitem{Beckner2008}
W. Beckner,
\emph{On the Grushin Operator and Hyperbolic Symmetry},
Forum Math. \textbf{20} (2008), no.~4, 617--637.

\bibitem{anderson}
H. Brezis,
\emph{Functional Analysis, Sobolev Spaces and Partial Differential Equations},
Springer, New York, 2011.

\bibitem{CalderonZygmund1952}
A. P. Calder\'on and A. Zygmund,
\emph{On the Existence of Certain Singular Integrals},
Acta Mathematica \textbf{88} (1952), 85--139.

\bibitem{CalderonZygmund1956}
A. P. Calder\'on and A. Zygmund,
\emph{On Singular Integrals},
American Journal of Mathematics \textbf{78} (1956), no.~2, 289--309.

\bibitem{Cazacu1}
C. Cazacu,
\emph{The method of super-solutions in Hardy and Rellich type inequalities
in the $L^2$ setting: an overview of well-known results and short proofs},
Rev. Roumaine Math. Pures Appl. \textbf{66} (2021), no. 3--4, 617--638. 

\bibitem{Cazacu2}
C. Cazacu,
\emph{A new proof of the Hardy-Rellich inequality in any dimension},
Proc. Roy. Soc. Edinburgh Sect. A \textbf{150} (2020), no. 6, 2894--2904.

\bibitem{Cazacu3}
C. Cazacu and I. Fidel,
\emph{Weighted Hardy-Rellich type inequalities: improved best constants
and symmetry breaking},
Rev. Roumaine Math. Pures Appl. \textbf{69} (2024), no. 3--4, 397--413.

\bibitem{Dupaigne}
L. Dupaigne,
\emph{Stable Solutions of Elliptic Partial Differential Equations},
Chapman \& Hall/CRC Monographs and Surveys in Pure and Applied Mathematics,
vol. 143, Chapman \& Hall/CRC, Boca Raton, FL, 2011.

\bibitem{Charro}
L. C. Evans,
\emph{Partial Differential Equations},
2nd ed., Graduate Studies in Mathematics, vol. 19,
American Mathematical Society, Providence, RI, 2010.

\bibitem{Real-Ros-Oton}
X. Fern{\'a}ndez-Real and X. Ros-Oton,
\emph{Regularity Theory for Elliptic PDE},
Zurich Lectures in Advanced Mathematics, vol. 28,
EMS Press, Berlin, 2022.

\bibitem{Gesztesy}
F. Gesztesy and L. Littlejohn,
\emph{Factorizations and Hardy--Rellich-type inequalities},
in \emph{Nonlinear Partial Differential Equations, Mathematical Physics,
and Stochastic Analysis},
EMS Series of Congress Reports, 207--226,
European Mathematical Society, Zürich, 2018.

\bibitem{Xavier}
D. Gilbarg and N. S. Trudinger,
\emph{Elliptic Partial Differential Equations of Second Order},
2nd ed., Springer, Berlin, 2001.


\bibitem{Kato}
T. Kato,
\emph{Perturbation Theory for Linear Operators},
Reprint of the 1980 edition,
Springer-Verlag, Berlin, 1995.



\bibitem{Kim-Tsai}
H. Kim and T.-P. Tsai,
\emph{Existence, uniqueness, and regularity results for elliptic equations
with drift terms in critical weak spaces},
SIAM J. Math. Anal. \textbf{52} (2020), no. 2, 1146--1191.

\bibitem{Killip2018} R.~Killip, C.~Miao, M.~Visan, J.~Zhang, and J.~Zheng, \emph{Sobolev spaces adapted to the Schrödinger operator with inverse-square potential}, Math. Z. \textbf{288} (2018), no.~3--4, 1273--1298.

\bibitem{LP07}
T. Leonori and F. Petitta,
\emph{Existence and regularity results for some singular elliptic problems},
Adv. Nonlinear Stud. \textbf{7} (2007), no. 3, 329--344.

\bibitem{Metafune2016}
G. Metafune, N. Okazawa, M. Sobajima, and C. Spina,
\emph{Scale invariant elliptic operators with singular coefficients},
J. Evol. Equ. \textbf{16} (2016), no. 2, 391--439.

\bibitem{Metafune2015}
G. Metafune, M. Sobajima, and C. Spina,
\emph{Weighted Calder\'on--Zygmund and Rellich inequalities in $L^p$},
Math. Ann. \textbf{361} (2015), no. 1--2, 313--366.

\bibitem{Ireneo Peral}
I. Peral Alonso and F. Soria de Diego,
\emph{Elliptic and Parabolic Equations Involving the Hardy--Leray Potential},
De Gruyter Series in Nonlinear Analysis and Applications, vol. 38,
De Gruyter, Berlin--Boston, 2021.


\bibitem{Stampacchia}
G. Stampacchia,
\emph{Le problème de Dirichlet pour les équations elliptiques du second ordre
à coefficients discontinus},
Ann. Inst. Fourier (Grenoble) \textbf{15} (1965), no. 1, 189--258.









\end{thebibliography}

\end{document}